\documentclass[11pt]{article}

\usepackage{amsfonts}
\usepackage{amstext}
\usepackage{amsthm}
\usepackage{amsmath}
\usepackage{amssymb}
\usepackage{latexsym}
\usepackage{graphicx}
\usepackage[latin1]{inputenc}
\usepackage[colorlinks=true,linkcolor=blue,citecolor=red]{hyperref}

\topmargin .5cm
\oddsidemargin 2cm
\textwidth 18.0cm
\textheight 23cm
\parindent=0.3cm
\hoffset -2.6cm
\voffset -2.0cm

\newcommand{\R}{\Bbb{R}}

\newcommand{\vs}{\vspace*}
\newcommand{\n}{{\noindent}}

\DeclareMathOperator{\Hessian}{Hess}

\usepackage{graphicx}

\newtheorem{lem}{Lemma}[section]

\newtheorem{remark}{Remark}[section]
\newtheorem{defi}{Definition}[section]
\newtheorem{exam}{Example}[section]
\newtheorem{corol}{Corollary}[section]
\newtheorem{theo}{Theorem}[section]
\newtheorem{prop}{Proposition}[section]

\begin{document}

\title{Extremal solutions for a broad spectrum of nonlinear elliptic systems
\footnote{Key words: Elliptic system, positive solution, extremal solution, stability, regularity}
}

\author{\textbf{Felipe Costa \footnote{\textit{E-mail addresses}:
felipemat@ufmg.br (F. Costa)}}\\ {\small\it Departamento de Matem\'{a}tica,
Universidade Federal de Minas Gerais,}\\ {\small\it Caixa Postal
702, 30123-970, Belo Horizonte, MG, Brazil}\\
\textbf{Gil F. de Souza \footnote{\textit{E-mail addresses}:
gilsouza@ufop.edu.br (G. F. de Souza)}}\\ {\small\it Departamento de Matem\'{a}tica,
Universidade Federal de Ouro Preto,}\\ {\small\it ICEB, 35400-000, Ouro Preto, MG, Brazil}\\
\textbf{Marcos Montenegro \footnote{\textit{E-mail addresses}:
montene@mat.ufmg.br (M. Montenegro)}}\\ {\small\it Departamento de Matem\'{a}tica,
Universidade Federal de Minas Gerais,}\\ {\small\it Caixa Postal
702, 30123-970, Belo Horizonte, MG, Brazil}}

\date{}{

\maketitle

\markboth{abstract}{abstract}
\addcontentsline{toc}{section}{Abstract}

\hrule \vspace{0,2cm}

\n {\bf Abstract}

In this paper we study positive solutions for the Dirichlet problem
\[
\left\{
\begin{array}{rlllr}
-{\cal L} u &=& \Lambda F(x,u) & {\rm in} & \Omega, \\

u&=&0 & {\rm on} & \partial \Omega,
\end{array}\right.
\]
where $\Omega$ is a smooth bounded domain in $\mathbb{R}^n$, $n\geq2$, $u=(u_1,\ldots,u_m): \overline{\Omega} \to \mathbb{R}^m$, $m \geq 1$, ${\cal L} u =({\cal L}_1 u_1, \ldots, {\cal L}_m u_m)$, where each ${\cal L}_i$ denotes a uniformly elliptic linear operator of second order in nondivergence form in $\Omega$, $\Lambda = (\lambda_1,\ldots, \lambda_m) \in \R^m$, $F=(f_1,\ldots,f_m): \Omega \times \R^m \to \mathbb{R}^m$ and $\Lambda F(x,u)=(\lambda_1f_1(x,u),\ldots,\lambda_mf_m(x,u))$.

For a general class of maps $F$ we prove that there exists a hypersurface $\Lambda^*$ in $\R^m_+ := (0,\infty)^m$ such that tuples $\Lambda \in \R^m_+$ below $\Lambda^*$ correspond to minimal positive strong solutions of the above system. Stability of these solutions is also discussed. Already for tuples above $\Lambda^*$, there is no nonnegative strong solution. The shape of the hypersurface $\Lambda^*$ depends on growth on $u$ of the vector nonlinearity $F$ in a precise way to be addressed. When $\Lambda \in \Lambda^*$ and the coefficients of each operator ${\cal L}_i$ are slightly smooth, the problem admits a unique minimal nonnegative weak solution, called extremal solution. Furthermore, when $F$ is a potential field and each ${\cal L}_i$ is the Laplacian, we investigate the strong regularity of this solution for any $m \geq 1$ in dimensions $n = 2$ and $n = 3$ for convex domains and $2 \leq n \leq 9$ for balls.

\vspace{0.5cm}
\hrule\vspace{0.2cm}





\setcounter{page}{1}

\vs{0.20cm}

\section{Introduction and main statements}

The present work concerns mainly with existence, regularity and stability of positive strong solution for the elliptic system to $m$-parameters with zero boundary data:

\[
\left\{
\begin{array}{rlllrl}
-{\cal L}_i u_i &=& \lambda_i f_i(x,u_1, \ldots, u_m) & {\rm in} & \Omega, & i=1, \ldots, m, \\

u_i&=&0 & {\rm on} & \partial \Omega,& i=1, \ldots, m,
\end{array}\right.
\]
where $\Omega$ is a bounded open subset of $\mathbb{R}^n$ with $C^{1,1}$ boundary, $n\geq2$, $f_i: \Omega \times \R^m \rightarrow \R$ is a Carathéodory function, $m \geq 1$, $\lambda_i$ is a positive number and ${\cal L}_i$ denotes a linear differential operator of second order in $\Omega$ of the form

\[
{\cal L}_i = a^i_{kl}(x) \partial_{kl} + b^i_j(x) \partial_j + c^i(x)
\]
for every $i = 1, \ldots, m$. We assume that each ${\cal L}_i$ is uniformly elliptic, that is, there exist positive constants $C_0$ and $c_0$ such that

\[
c_0 |\xi|^2 \leq a^i_{kl}(x) \xi_k \xi_l \leq C_0 |\xi|^2,\ \ \forall \xi \in \R^n,\ x \in \Omega,\ i = 1, \ldots, m \, .
\]
We also consider coefficients $a^i_{kl} \in C(\overline{\Omega})$, $b^i_j, c^i \in L^\infty(\Omega)$ and a constant $b > 0$ such that

\[
|b^i_j(x)|, |c^i(x)| \leq b,\ \ \forall x \in \Omega,\ i = 1, \ldots, m\, .
\]
Moreover, assume that each ${\cal L}_i$ satisfies the strong maximum principle in $\Omega$, or equivalently, its principal eigenvalue $\mu_1(-{\cal L}_i, \Omega)$ is positive.

Throughout paper, the above problem will be represented shortly as

\begin{gather}\tag{1.1} \label{1}
\left\{
\begin{array}{rlllr}
-{\cal L} u &=& \Lambda F(x,u) & {\rm in} & \Omega, \\
u&=&0 & {\rm on} & \partial \Omega
\end{array}\right.
\end{gather}
with the natural notations $u=(u_1,\ldots,u_m)$, ${\cal L} u =({\cal L}_1 u_1, \ldots, {\cal L}_m u_m)$, $\Lambda = (\lambda_1,\ldots, \lambda_m) \in \R^m_+ := (0,\infty)^m$, $F=(f_1,\ldots,f_m)$ and $\Lambda F(x,u)=(\lambda_1f_1(x,u),\ldots,\lambda_mf_m(x,u))$.

For $m = 1$, the classical Dirichlet problem

\begin{gather}\tag{1.2} \label{2}
\left\{
\begin{array}{rlllr}
-{\cal L} u &=& \lambda f(u) & {\rm in} & \Omega, \\
u&=&0 & {\rm on} & \partial \Omega
\end{array}\right.
\end{gather}
has been well studied since the 1960s for $C^1$ functions $f: \R \rightarrow \R$ satisfying:

\begin{itemize}
\item[(a)] $f(0) > 0$ ($f$ is positive at the origin);
\item[(b)] $f(s) \leq f(t)$ for every $0 \leq s \leq t$ ($f$ is nondecreasing);
\item[(c)] $\lim_{t \rightarrow \infty} \frac{f(t)}{t} = \infty$ ($f$ is superlinear at infinity).
\end{itemize}
In fact, it all started with the case treated by Gelfand in his celebrated work \cite{Gelfand1963} where ${\cal L}$ is the Laplace operator $\Delta$, $f(u) = e^u$ and $\Omega$ is a ball $B$. For more general operators, under the conditions (a)-(c), Keller and Cohen \cite{KC}, Keller and Keener \cite{KK} and Crandall and Rabinowitz \cite{Ra} established the existence of a positive parameter $\lambda^*$ such that for any $0 < \lambda < \lambda^*$, the problem \eqref{2} admits a minimal positive strong solution $u_\lambda$, which is non-decreasing and continuous on $\lambda \in (0, \lambda^*)$ and, moreover, it is differentiable on $\lambda$ and asymptotically stable provided that $f$ is convex, see \cite{Du}. Asymptotic stability here means that the principal eigenvalue $\mu_1(-\Delta - \lambda f^\prime(u_\lambda))$ corresponding to the linearized problem with zero boundary condition is positive. Already for $\lambda > \lambda^*$, they showed that there is no nonnegative strong solution. When ${\cal L} = \Delta$, by taking the pointwise limit of $(u_\lambda)$ as $\lambda \uparrow \lambda^*$, Brezis, Cazenave, Martel and Ramiandrisoa proved (see Lemma 5 of \cite{BCMR}) that a unique minimal nonnegative weak solution $u^* \in L^1(\Omega)$ exists for $\lambda = \lambda^*$, so called extremal solution of \eqref{2}, and also by assuming convexity of $f$ (or that $\Omega$ is a ball, see \cite{Du}), no nonnegative weak solution exist for $\lambda > \lambda^*$.

By adapting some ideas of \cite{BCMR}, we easily deduce that an extremal solution also exists for operators in nondivergence form with slightly smooth coefficients. Indeed, consider a uniformly elliptic operator ${\cal L} = a_{kl}(x) \partial_{kl} + b_j(x) \partial_j + c(x)$, with $a_{kl} \in C^2(\overline{\Omega})$ and $b_j \in C^1(\overline{\Omega})$, satisfying maximum principle. Denote by ${\cal L}^*$ the adjoint operator of ${\cal L}$. Note that $0 < \mu_1 := \mu_1(-{\cal L}, \Omega) = \mu_1(-{\cal L}^*, \Omega)$. Let $\varphi^* \in W^{2,n}(\Omega) \cap W_0^{1,n}(\Omega)$ be a positive eigenfunction of $-{\cal L}^*$ associated to $\mu_1$. Thanks to (a) and (c), there exists a constant $C > 0$ such that $f(t) \geq \frac{2 \mu_1}{\lambda^*} t - C$ for all $t \geq 0$. Using this inequality, we get

\[
\lambda \int_\Omega f(u_\lambda) \varphi^* dx = \int_\Omega u_\lambda (-{\cal L}^* \varphi^*) dx = \mu_1 \int_\Omega u_\lambda \varphi^* dx \leq \frac{1}{2} \lambda^* \int_\Omega f(u_\lambda) \varphi^* dx + \frac{1}{2} \lambda^* C \int_\Omega \varphi^* dx\, .
\]
Letting $\lambda \uparrow \lambda^*$ and using (b), we obtain that $\lim_{\lambda \rightarrow \lambda^*} \int_\Omega f(u_\lambda) \varphi^* dx$ exists and is finite.

We now take the positive strong solution $\zeta^* \in W^{2,n}(\Omega) \cap W_0^{1,n}(\Omega)$ of the problem

\[
\left\{
\begin{array}{rlllr}
-{\cal L}^* u &=& 1 & {\rm in} & \Omega, \\

u&=&0 & {\rm on} & \partial \Omega
\end{array}\right.
\]
and notice by Hopf's Lemma that $\zeta^* \leq C \varphi^*$ in $\Omega$ for some constant $C > 0$. Then, we obtain

\[
\int_\Omega u_\lambda dx = \lambda \int_\Omega f(u_\lambda) \zeta^* dx \leq \lambda^* C \int_\Omega f(u_\lambda) \varphi^* dx\, ,
\]
so that $(u_\lambda)$ is uniformly bounded in $L^1(\Omega)$. Since $u_\lambda$ is increasing on $\lambda$, it follows that the limit $u^* = \lim_{\lambda \rightarrow \lambda^*} u_\lambda$ exists in $L^1(\Omega)$. Moreover, again by (b), we have $f(u^*) \in L^1(\Omega, \delta(x) dx)$, where $\delta(x)$ represents the distance function of $x$ to $\partial \Omega$. Thus, one easily concludes that $u^*$ is a unique minimal nonnegative weak solution of \eqref{2}. The part of minimality and uniqueness follows directly from Lemma \ref{uniqueness-weak-solutions-lemma} in Section 5.

A fundamental question raised by Brezis and Vázquez \cite{BrezisVazquez1997} is whether the extremal solution $u^*$ is bounded and so classical. There are important answers concerning the case ${\cal L} = \Delta$. More previously, in 1973, Joseph and Lundgren \cite{JosephLundgren1973} showed that $u^*$ is not bounded when $f(u) = e^u$ and $\Omega$ is the unit ball in $\R^n$ for $n \geq 10$, because $\lambda^* = 2(n-2)$ and $u^*(x) = -2 \log |x|$ and, in 1980, Mignot and Puel \cite{MignotPuel1980} proved the boundedness of $u^*$ for any dimension $n \leq 9$. Thenceforth, other quite relevant results were established during the last two decades: In 2000, Nedev \cite{Nedev2000} showed that the extremal solution $u^*$ is bounded in dimensions $n \leq 3$ whenever $f$ is convex; still for such nonlinearities, Cabré and Capella \cite{CabreCapella2006} proved in 2006 that $u^*$ is bounded when $\Omega$ is a ball in $\R^n$ with $n \leq 9$; regarding more general functions $f$, Cabré \cite{Cabre2010} proved in 2010 that $u^*$ is bounded when $n \leq 4$ and $\Omega$ is convex; in 2013, Villegas \cite{Villegas2013} removed the convexity of $\Omega$ in dimension $n = 4$, however, convexity of $f$ was required; lastly, Cabré, Figalli, Ros-Oton and Serra \cite{CFRS2019} established recently that $u^*$ is bounded for $n\leq9$ whenever $f$ is convex and, moreover, the dimension $n = 9$ is optimal in some cases.

Regarding the case of systems with two equations ($m = 2$) involving Laplace operators, the problem \eqref{1} has received special attention in recent decades. For instance, we mention the contributions given by Montenegro \cite{Montenegro2005}, namely, existence and nonexistence of strong solution, existence of extremal solution and stability of minimal solutions, and by Aghajani and Cowan \cite{AghajaniCowan}, Cowan \cite{Cowan2011, Cowan2012}, Cowan and Fazly \cite{CowanFazly2014}, Fazly and Ghoussoub \cite{Fazly2015}, where are investigated the boundedness of extremal solutions.

This work is dedicated to the general case $m \geq 1$. Firstly, we study the existence of a $(m-1)$-hypersurface $\Lambda^*$ decomposing $\R_+^m$ into two connected components ${\cal A}$ and ${\cal B}$ such that \eqref{1} admits a stable minimal positive strong solution whenever $\Lambda \in {\cal A}$ and has no nonnegative strong solution for $\Lambda \in {\cal B}$. If further the coefficients of ${\cal L}_i$ are a little more smooth, then a unique minimal nonnegative weak solution, called extremal solution, exists for every $\Lambda \in \Lambda^*$. We also investigate its boundedness in lower dimensions for Laplace operators and maps $F$ of potential type.

In order to state the first assumptions to be satisfied by the map $F$, we introduce some notations for convenience. For $a=(a_1,\ldots,a_m)$, $b=(b_1,\ldots,b_m) \in\mathbb{R}^m$ and $\gamma=(\gamma_1,\ldots,\gamma_m) \in \R^m_+$, consider the product type notations

\[
ab =(a_1b_1,\ldots,a_mb_m)\, ,\ \ \ \Pi\, \gamma = \Pi_{i=1}^m \gamma_i
\]
and the nonlinear shift

\[
S_\gamma (a)=(|a_2|^{\gamma_1-1}a_2, |a_3|^{\gamma_2-1}a_3, \ldots,|a_m|^{\gamma_{m-1}-1}a_m, |a_1|^{\gamma_m-1}a_1)\, .
\]
We also denote $a < b$ (or $a \leq b$) to mean that $a_i < b_i$ (or $a_i \leq b_i$) for every $i=1,\ldots,m$.

The map $F:\Omega\times \R^m \to \R^m$ of interest here must meet some quite general regularity conditions, namely, $F(x, \cdot): \R^m \to \R^m$ is continuous for $x \in \Omega$ almost everywhere and $F(\cdot, t): \Omega \to \R^m$ belongs to $L^n(\Omega; \R^m):= L^n(\Omega) \times \cdots \times L^n(\Omega)$ for every $t \in \R_+^m$. Moreover, $F$ must satisfy three conditions closely related to the corresponding scalar ones:

\begin{itemize}
\item[(A)] $F(x,0) > 0$ for $x \in \Omega$ almost everywhere ($0 = (0,\ldots,0) \in \R^m$) ($F$ is positive at the origin);
\item[(B)] $F(x,s) \leq F(x,t)$ for $x \in \Omega$ almost everywhere and $s, t \in \R^m$ with $0 \leq s \leq t$ ($F$ is nondecreasing);
\item[(C)] There exist a map $\rho \in L^n(\Omega; \R^m)$ such that $\rho(x) > 0$ for $x \in \Omega$ almost everywhere and a tuple $\alpha \in \R^m_+$ with $\Pi\, \alpha = 1$ such that for any $\kappa > 0$, we have

    \[
    F(x,t) \geq \kappa \rho(x) S_\alpha(t)
    \]
    for $x \in \Omega$ almost everywhere and $t \in \R^m_+$ with $|t| > M$, where $M$ is a positive constant depending on $\kappa$ ($F$ is ``nonlinearly superlinear coupled" at infinity).
\end{itemize}
Observe that (C) provides simultaneously a coupling for \eqref{1} through the lower comparison of $F(\cdot,t)$ by the shift $S_\alpha(t)$ and a nonlinear superlinearity for maps $F$ expressed by the condition $\Pi\, \alpha = 1$.

We highlight below some strongly coupled prototype examples of maps $F$ that fulfill the above hypotheses (A), (B) and (C).

\begin{exam} Consider $F(x,t) = \rho(x) S_\beta \circ {\cal E}(t)$, where $\rho = (\rho_1, \ldots, \rho_m) \in L^n(\Omega; \R^m)$ is a positive map almost everywhere in $\Omega$, $\beta = (\beta_1, \ldots, \beta_m) \in \R^m_+$ is a $m$-tuple and ${\cal E}(t) := (e^{t_1}, \dots, e^{t_m})$. In explicit form, we have

\[
F(x,t) = (\rho_1(x) e^{\beta_1 t_2}, \ldots, \rho_{m-1}(x) e^{\beta_{m-1} t_m}, \rho_m(x) e^{\beta_m t_1}).
\]
\end{exam}

\begin{exam} Consider $F(x,t) = {\cal P}_\alpha \circ (\tau + \rho S_\beta)(x,t)$, where $\rho = (\rho_1, \ldots, \rho_m)$ and $\tau = (\tau_1, \ldots, \tau_m) \in L^n(\Omega; \R^m)$ are positive maps almost everywhere in $\Omega$, $\alpha = (\alpha_1, \ldots, \alpha_m), \beta = (\beta_1, \ldots, \beta_m) \in \R^m_+$ are $m$-tuples such that $\Pi \, \alpha \beta > 1$ and ${\cal P}_\alpha(t) = (t_1^{\alpha_1}, \ldots, t_m^{\alpha_m})$. In explicit form, we have

\[
F(x,t) = ((\rho_1(x) t_2^{\beta_1} + \tau_1(x))^{\alpha_1}, \ldots, (\rho_{m-1}(x) t_m^{\beta_{m-1}} + \tau_{m-1}(x))^{\alpha_{m-1}}, (\rho_m(x) t_1^{\beta_m} + \tau_m(x))^{\alpha_m}).
\]
\end{exam}

\begin{exam} Consider $F(x,t) = S_\beta(\tau(x) + A(x) t)$, where $A = [A_{ij}] \in L^n(\Omega; \R^{m^2})$ is a nonnegative matrix almost everywhere in $\Omega$ with positive diagonal entries, $\tau = (\tau_1, \ldots, \tau_m) \in L^n(\Omega; \R^m)$ is a positive map almost everywhere in $\Omega$ and $\beta = (\beta_1, \ldots, \beta_m) \in \R^m_+$ is a $m$-tuple such that $\Pi \, \beta > 1$. In explicit form, we have

\[
F(x,t) = \left(\left(\sum_{j=1}^m A_{2j}(x) t_j + \tau_2(x)\right)^{\beta_1}, \ldots, \left(\sum_{j=1}^m A_{mj}(x) t_j + \tau_m(x)\right)^{\beta_{m-1}}, \left(\sum_{j=1}^m A_{1j}(x) t_j + \tau_1(x)\right)^{\beta_m}\right).
\]
\end{exam}

\begin{exam} Consider $F(x,t) = \rho(x) \nabla f(t)$, where $\rho \in L^n(\Omega)$ is a positive function almost everywhere in $\Omega$ and $f(t) = \Pi_{i=1}^m f_i(t_i)$, $t = (t_1, \ldots, t_m)$, being $f_i: \R \rightarrow \R$ convex $C^1$ functions satisfying the scalar conditions (a), (b) and (c) and also $f_i'(0) > 0$ for every $i= 1, \ldots, m$.
\end{exam}

Our first result states that

\begin{theo}\label{separation}
Let $F:\Omega\times \R^m \to \R^m$ be a map such that $F(x, \cdot): \R^m \to \R^m$ is continuous for $x \in \Omega$ almost everywhere and $F(\cdot, t): \Omega \to \R^m$ belongs to $L^n(\Omega; \R^m)$ for any $t \in \R^m$. Assume also that $F$ satisfies (A), (B) and (C). Then, there exists a set $\Lambda^*$ decomposing $\R^m_+$ into two sets ${\cal A}$ and ${\cal B}$ satisfying two assertions:
\begin{itemize}
\item[(I)] Problem \eqref{1} admits a minimal positive strong solution $u_\Lambda \in W^{2,n}(\Omega; \R^m) \cap W_0^{1,n}(\Omega; \R^m)$ for any $\Lambda \in {\cal A}$. Moreover, the map $\Lambda \in {\cal A} \mapsto u_\Lambda$ is nondecreasing and continuous;
\item[(II)] Problem \eqref{1} admits no nonnegative strong solution for any $\Lambda \in {\cal B}$.
\end{itemize}
\end{theo}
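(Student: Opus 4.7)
Define
\[
\mathcal{A} := \bigl\{\Lambda \in \R^m_+ : \eqref{1} \text{ admits a nonnegative strong solution}\bigr\},
\]
set $\Lambda^* := \partial \mathcal{A} \cap \R^m_+$ and $\mathcal{B} := \R^m_+ \setminus \overline{\mathcal{A}}$. Condition (A) combined with (B) and the strong maximum principle forces every nonnegative strong solution to be strictly positive, since $-\mathcal{L}_i u_i = \lambda_i f_i(x, u) \geq \lambda_i f_i(x, 0) > 0$ rules out $u_i \equiv 0$. The theorem reduces to four structural facts about $\mathcal{A}$: (i) $\mathcal{A}$ contains a neighborhood of the origin in $\R^m_+$; (ii) $\mathcal{A}$ is downward closed in the coordinatewise order; (iii) $\mathcal{A}$ is bounded, so $\mathcal{B}\neq\emptyset$; and (iv) the minimal solution $u_\Lambda$ exists for every $\Lambda \in \mathcal{A}$ and depends monotonically and continuously on $\Lambda$.

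\textbf{Monotone iteration: facts (i), (ii), (iv).} The main tool is the sequence $v^0 := 0$ and
\[
v^{k+1}_i \in W^{2,n}(\Omega) \cap W^{1,n}_0(\Omega), \qquad -\mathcal{L}_i v^{k+1}_i = \lambda_i f_i(\cdot, v^k) \text{ in } \Omega,
\]
well posed by the $L^n$--solvability theory for uniformly elliptic operators in nondivergence form under $\mu_1(-\mathcal{L}_i, \Omega) > 0$. By (B), induction and the maximum principle, $v^k$ is nondecreasing in $k$ and is dominated by every nonnegative strong supersolution of \eqref{1}. Hence, if $\Lambda \in \mathcal{A}$ has solution $u$, then for any $0 < \Lambda' \leq \Lambda$ the same scheme at $\Lambda'$ stays below $u$, converges in $W^{2,n}$ to a positive strong solution $u_{\Lambda'}$ which is automatically minimal; this yields (ii), the construction of $u_\Lambda$ in (iv), and the monotonicity $\Lambda' \leq \Lambda \Rightarrow u_{\Lambda'} \leq u_\Lambda$. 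Continuity of $\Lambda \mapsto u_\Lambda$ in $W^{2,n}$ then follows from monotonicity, the uniform a priori bound obtained by sandwiching $u_{\Lambda_k}$ between $u_{\Lambda_-}$ and $u_{\Lambda_+}$ for nearby $\Lambda_\pm \in \mathcal{A}$, and uniqueness of the monotone limit as a minimal solution. For (i), fix $R > 0$: by (B) one has $\sup_{0 \leq t \leq R\mathbf{1}} f_i(\cdot, t) = f_i(\cdot, R\mathbf{1}) \in L^n(\Omega)$, and the $L^n$-to-$L^\infty$ estimate $\|w\|_\infty \leq C\|g\|_{L^n}$ preserves $\|v^k\|_\infty \leq R$ along the iteration whenever $\lambda_i \leq R/(C\|f_i(\cdot, R\mathbf{1})\|_{L^n})$ for every $i$.

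\textbf{Boundedness of $\mathcal{A}$ --- the main obstacle.} Fact (iii) is where the coupling hypothesis (C) and the constraint $\Pi\alpha = 1$ are used essentially. Let $\varphi_i^* > 0$ denote the principal eigenfunction of $\mathcal{L}_i^*$ associated with $\mu_i = \mu_1(-\mathcal{L}_i, \Omega) > 0$. For any nonnegative strong solution $u$ at $\Lambda$, integration by parts gives
\[
\mu_i \int_\Omega u_i \varphi_i^* \, dx = \lambda_i \int_\Omega f_i(\cdot, u) \varphi_i^* \, dx, \qquad i = 1, \ldots, m.
\]
Condition (C), combined with (A)--(B) to absorb the bounded set $\{|u| \leq M_\kappa\}$, yields the pointwise lower bound $f_i(\cdot, u) \geq \kappa \rho_i u_{i+1}^{\alpha_i} - \kappa \rho_i M_\kappa^{\alpha_i}$ valid for all $u \geq 0$, with the cyclic convention $u_{m+1} := u_1$. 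Applying Jensen's inequality (for $\alpha_i \geq 1$) or Hölder (for $\alpha_i < 1$) to the weighted probability measures $\rho_i \varphi_i^*\, dx / \int \rho_i \varphi_i^*$ produces a system of cyclic inequalities linking the moments $J_i := \int_\Omega u_i \varphi_i^*\, dx$. Composing them around the $m$-cycle and invoking $\Pi\alpha = 1$ to collapse the resulting product of exponents back to a first-power relation in $J_1$ delivers an estimate of the shape
\[
\kappa^m \, \Pi_{i=1}^m \lambda_i \;\leq\; \Phi\bigl(\kappa, \{\mu_i\}, \{\rho_i\}, \{\varphi_i^*\}\bigr),
\]
where $\Phi$ grows polynomially in $\kappa$ of degree strictly less than $m$. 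Since $\kappa > 0$ is arbitrary, $\Pi \lambda_i$ must be bounded, proving (iii). The delicate part is the correct bookkeeping of the Jensen/Hölder chain when the $\alpha_i$ are mixed: the cyclic cancellation allowed precisely by $\Pi\alpha = 1$ is what turns what would otherwise be a family of decoupled scalar bounds into a single joint bound on $\Pi\lambda_i$.
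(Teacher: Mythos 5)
Your overall architecture (monotone iteration, downward closedness of the solvability region, positivity via the strong maximum principle and (A)--(B), and continuity of the minimal branch by sandwiching) tracks the paper's Lemmas 3.1, 3.4, 3.5 closely, and those parts are sound. The genuine divergence --- and the gap --- is in fact (iii), the boundedness of $\mathcal{A}$.

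The paper does not run a Jensen/H\"older chain against the separate adjoint eigenfunctions $\varphi_i^*$. Instead it first reduces (C) to a global pointwise bound $C_0 F(x,t)\geq \rho_0(x)S_\alpha(t)$ (Lemma 3.2), then, in Section~2, invokes a nonlinear Krein--Rutman theorem to produce, for the strongly coupled eigenvalue problem $-\mathcal{L}\varphi=\Lambda\rho_0 S_\alpha\varphi$, a genuine principal eigen-tuple $\Lambda_0=(\theta_*,\theta_*\sigma)$ along each ray together with a positive eigenfunction $\varphi_0$ adapted \emph{simultaneously} to all $m$ equations and to the weight $\rho_0$. Lemma 3.3 then runs a sliding argument: the set $\{s:u>s^{\hat\alpha}\varphi_0\}$ has a supremum $s^*$, and if $\Lambda>C_0\Lambda_0$ the system comparison together with {\bf (SMP)}/{\bf (HL)} would push $s^*$ strictly up, a contradiction. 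This gives $\lambda\leq C_0\theta_*(\sigma)$ directly on each ray, which is exactly the form needed for the parametrization of $\Lambda^*$ used in Theorem~1.2.

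Your proposed chain has two concrete problems. First, the inequality
\[
\int_\Omega \rho_i\,u_{i+1}^{\alpha_i}\,\varphi_i^*\,dx \;\geq\; c\Bigl(\int_\Omega u_{i+1}\,\varphi_{i+1}^*\,dx\Bigr)^{\alpha_i}
\]
requires two ingredients that are not available: a comparison $\rho_i\varphi_i^*\gtrsim\varphi_{i+1}^*$ (false in general, since $\rho_i$ is merely a positive $L^n$ function with no positive lower bound), and, for $\alpha_i<1$, the Jensen step in the favorable direction. For a concave power, Jensen gives
\[
\int_\Omega u^{\alpha_i}\,d\nu \;\leq\; \Bigl(\int_\Omega u\,d\nu\Bigr)^{\alpha_i},
\]
the \emph{opposite} of what you need, and H\"older does not reverse it. Since $\Pi\alpha=1$ forces some $\alpha_i<1$ whenever they are not all equal to $1$, the cycle cannot close. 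Second, the claim that the error from the compact set $\{|t|\leq M_\kappa\}$ grows in $\kappa$ with degree strictly less than $m$ is not justified: $M_\kappa$ is an arbitrary function of $\kappa$ given by (C), so the additive term $B_\kappa$ can grow as fast as one likes. The paper sidesteps both issues by fixing $\kappa=1$ in Lemma 3.2 to get a single $\kappa$-independent pointwise bound, and then transferring the burden to the eigenvalue problem, where the adapted eigenfunction $\varphi_0$ replaces the incompatible weights $\rho_i\varphi_i^*$ and the sliding argument replaces the moment chain.

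To repair your approach you would essentially be rebuilding the Section~2 machinery: you need a single comparison map that is simultaneously a lower barrier for all $m$ components with the correct homogeneity under the cyclic shift $S_\alpha$, and the nonlinear Krein--Rutman eigenfunction is exactly that object. I would recommend adopting the paper's Lemma 3.2 plus the principal eigenvalue comparison rather than trying to patch the Jensen/H\"older chain.
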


The next theorem exhibits some properties about regularity, monotonicity and asymptotic behavior of the separating set $\Lambda^*$.

\begin{theo}\label{properties}
Let $F$ be as in Theorem \ref{separation}. The set $\Lambda^*$ admits a parametrization $\Phi : \R^{m-1}_+ \to \Lambda^* \subset \R^m_+$ of the form $\Phi(\sigma) = (\lambda^*(\sigma), \nu^*(\sigma)) := (\lambda^*(\sigma),\lambda^*(\sigma) \sigma)$, where $\lambda^*(\sigma)$ is a positive number for each $\sigma \in \R^{m-1}_+$.
Moreover, we have:
\begin{enumerate}
\item[(I)] $\Phi$ is continuous;
\item[(II)] $\lambda^*(\sigma)$ is nonincreasing, that is, $\lambda^*(\sigma_1) \geq \lambda^*(\sigma_2)$ whenever $0 < \sigma_1 \leq \sigma_2$;
\item[(III)] For each $0 < \sigma_1 \leq \sigma_2$, there exists $i\in\{1,\ldots,m-1\}$ such that $\nu^*_i(\sigma_1)\leq\nu^*_i(\sigma_2)$. In particular, for $m=2$, $\nu^*$ is nondecreasing;
\item[(IV)] $\lambda^*(\sigma) \to 0$ as $\sigma_i \to \infty$ for each fixed $i = 1, \ldots, m-1$ (partial limits).
\end{enumerate}
\end{theo}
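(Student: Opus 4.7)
I would parametrize $\Lambda^*$ by rays through the origin of $\R^m_+$. For each $\sigma \in \R^{m-1}_+$, set
\[
\lambda^*(\sigma) := \sup\bigl\{\lambda > 0 : (\lambda, \lambda\sigma) \in {\cal A}\bigr\}, \qquad \Phi(\sigma) := (\lambda^*(\sigma), \lambda^*(\sigma)\sigma).
\]
Positivity $\lambda^*(\sigma) > 0$ follows from Theorem \ref{separation}(I), since sufficiently small $\Lambda$ always lies in ${\cal A}$ (a supersolution is obtained by inverting each $-{\cal L}_i$ against a bounded datum). Downward-closedness of ${\cal A}$ is built into Theorem \ref{separation}: the minimal strong solution $u_\Lambda$ serves as a supersolution for any $\Lambda' \leq \Lambda$. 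Hence $(\lambda, \lambda\sigma) \in {\cal A}$ for all $\lambda < \lambda^*(\sigma)$, while $(\lambda, \lambda\sigma) \notin {\cal A}$ for $\lambda > \lambda^*(\sigma)$ by the very definition of supremum; thus $\Phi(\sigma)$ lies in the common boundary $\Lambda^*$. Property (II) is then immediate: for $0 < \sigma_1 \leq \sigma_2$ and $\lambda < \lambda^*(\sigma_2)$, the minimal solution at $(\lambda, \lambda\sigma_2) \in {\cal A}$ supersolves the system at $(\lambda, \lambda\sigma_1) \leq (\lambda, \lambda\sigma_2)$, placing it in ${\cal A}$ and yielding $\lambda^*(\sigma_1) \geq \lambda^*(\sigma_2)$.

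For continuity (I), I would use that ${\cal A}$ and ${\cal B}$ are open in $\R^m_+$, a byproduct of Theorem \ref{separation} (strict supersolutions persist under small perturbations of $\Lambda$, and nonexistence for $\Lambda \in {\cal B}$ is stable under small increases). Given $\sigma_k \to \sigma$: if $\lambda < \lambda^*(\sigma)$, openness of ${\cal A}$ forces $(\lambda, \lambda\sigma_k) \in {\cal A}$ eventually, so $\liminf \lambda^*(\sigma_k) \geq \lambda$; symmetrically, $\lambda > \lambda^*(\sigma)$ gives $\limsup \lambda^*(\sigma_k) \leq \lambda$. Assertion (III) then follows by contradiction: assume $\sigma_1 \leq \sigma_2$ with $\nu^*_i(\sigma_1) > \nu^*_i(\sigma_2)$ for every $i \in \{1,\ldots,m-1\}$, and combine with (II). If $\lambda^*(\sigma_1) = \lambda^*(\sigma_2)$, the assumption forces $\sigma_{1,i} > \sigma_{2,i}$ for every $i$, against $\sigma_1 \leq \sigma_2$. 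If $\lambda^*(\sigma_1) > \lambda^*(\sigma_2)$, one has $\Phi(\sigma_2) < \Phi(\sigma_1)$ componentwise strictly; choosing $t < 1$ with $\Phi(\sigma_2) \leq t\Phi(\sigma_1)$ and noting that $t\Phi(\sigma_1) \in {\cal A}$ (by star-shapedness of ${\cal A}$ with respect to the origin, a direct consequence of downward-closedness) places $\Phi(\sigma_2) \in {\cal A}$, contradicting $\Phi(\sigma_2) \in \Lambda^*$. For $m=2$ the index set reduces to $\{1\}$, so $\nu^*$ is unconditionally nondecreasing.

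The main substance lies in (IV), and in the accompanying finiteness $\lambda^*(\sigma) < \infty$. Both rest on condition (C). I would integrate the $i$-th equation of \eqref{1} against the positive principal eigenfunction $\varphi_i$ of $-{\cal L}_i^*$ (with eigenvalue $\mu_i > 0$), and use $F_i(x,t) \geq \kappa\, \rho_i(x)\, t_{i+1}^{\alpha_i}$ on $\{|t| > M\}$ from (C) (cyclic convention $t_{m+1} := t_1$), to obtain the $m$ estimates
\[
\mu_i \int_\Omega u_i\, \varphi_i\, dx \;\geq\; \kappa\,\lambda_i \int_\Omega \rho_i\, \varphi_i\, u_{i+1}^{\alpha_i}\, dx \;-\; C_i\,\lambda_i, \qquad i = 1,\ldots,m.
\]
Composing these cyclically by Hölder's inequality and invoking $\Pi\,\alpha = 1$ makes the $u$-factors cancel, leaving a uniform bound $\prod_{i=1}^m \lambda_i \leq C$ valid for every $\Lambda \in {\cal A}$, with $C$ depending only on $\Omega$, the $\mu_i$ and the map $\rho$. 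Evaluating at $\Lambda = \Phi(\sigma)$ gives $(\lambda^*(\sigma))^m \prod_{i=1}^{m-1} \sigma_i \leq C$, which simultaneously yields finiteness of $\lambda^*$ and the decay $\lambda^*(\sigma) \to 0$ as any single $\sigma_i \to \infty$ with the remaining coordinates held fixed, proving (IV).

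The hard part will be executing the cyclic Hölder composition cleanly --- in particular absorbing the lower-order terms coming from the ``$|t| \leq M$'' regime of (C) while preserving the collapse produced by $\Pi\,\alpha = 1$. That product condition is precisely the natural generalization to arbitrary $m$ of the classical notion of superlinearity underlying the scalar and two-equation theories, and is what makes the cyclic chain close.
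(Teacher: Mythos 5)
Your treatment of parts (I)--(III) tracks the paper's argument in Section~4 in all but cosmetic details: the paper also proves (I)--(III) by contradiction, each time invoking the downward-closedness of the existence set (a strong solution at $\overline\Lambda$ supersolves every system with parameter $\underline\Lambda<\overline\Lambda$). Your reformulation of (I) via openness of $\mathcal{A}$ and $\mathcal{B}$ is logically equivalent, although your claim that openness of $\mathcal{B}$ follows from ``nonexistence is stable under small increases'' only handles perturbations \emph{above} $\Lambda_0$; the full openness argument requires the same two-parameter squeeze ($\underline\lambda<\overline\lambda$ together with $\sigma_k\to\sigma$) that the paper uses, so you are implicitly redoing that computation. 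Your argument for (III) via star-shapedness of $\mathcal{A}$ is a genuine small variant --- it also works once you pick $t$ strictly less than $1$ so that $\Phi(\sigma_2)$ is forced strictly into $\mathcal{A}$, not merely into $\mathcal{A}_0$ --- and buys a bit of geometric intuition, though at bottom it is again the downward-closedness lemma.

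Part (IV) is where there is a real gap. You test the $i$-th equation against the \emph{unweighted} adjoint eigenfunction $\varphi_i$ of $-\mathcal{L}_i^*$ and then hope the cyclic chain closes by H\"older and $\Pi\,\alpha=1$. But each step of your proposed chain produces a term of the form $\int_\Omega \rho_i\varphi_i\, u_{i+1}^{\alpha_i}\,dx$, while the term you would need to feed the next step is $\int_\Omega u_{i+1}\varphi_{i+1}\,dx$. These do not match: $\rho_i$ is only assumed positive a.e.\ and in $L^n$, not bounded below, so $\rho_i\varphi_i\not\gtrsim\varphi_{i+1}$ in general, and Jensen/H\"older with exponents $\alpha_i$ (some $\geq 1$, some $\leq 1$) cannot absorb the weight mismatch. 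Even in the scalar case $m=1$, $\alpha=1$, your inequality reduces to $\mu_1\int u\varphi\,dx\geq\kappa\lambda\int\rho\varphi u\,dx-C\lambda$, which yields nothing unless $\rho\varphi\gtrsim\varphi$. Moreover, your putative conclusion $(\lambda^*(\sigma))^m\prod_i\sigma_i\leq C$ has the wrong exponents for general $\alpha$: the correct scaling, visible in Corollary~\ref{Autovalor}, involves weights $\prod_{k\leq i}\alpha_k$ on both $\lambda^*(\sigma)$ and the $\sigma_i$'s.

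What makes the argument close in the paper is precisely the machinery you skip: Proposition~\ref{Lane-Emden} (nonlinear Krein--Rutman) constructs the \emph{weighted cyclic} eigenpair for $-\mathcal{L}\varphi=\Lambda\rho(x)S_\alpha\varphi$, yielding the principal spectral hypersurface and the explicit formula for $\theta_*(\sigma)$ in Corollary~\ref{Autovalor}. Lemma~\ref{existence_lema2} then compares a positive strong solution with $s^{\hat\alpha}\varphi_0$, where $\varphi_0$ is that weighted eigenfunction, via the strong maximum principle and Hopf's lemma --- a Perron-type sliding argument, not an integral estimate --- to get $\lambda^*(\sigma)\leq C_0\,\theta_*(\sigma)$. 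Part (IV) is then read off from the explicit decay of $\theta_*(\sigma)$ as any $\sigma_i\to\infty$. To salvage your integral approach you would have to test against the eigenfunction of the \emph{weighted adjoint} cyclic problem (whose existence is exactly what the Krein--Rutman step provides), at which point you have essentially reconstructed Section~2 of the paper.
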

As we shall see later, the sets ${\cal A}$ and ${\cal B}$ are related to the parametrization $\Phi(\sigma)$ by

\begin{eqnarray*}
&& {\cal A} = \{(\lambda,\lambda \sigma):\ 0 < \lambda < \lambda^*(\sigma),\ \sigma \in \R^{m-1}_+\},\\
&& {\cal B} = \{(\lambda,\lambda \sigma):\ \lambda > \lambda^*(\sigma),\ \sigma \in \R^{m-1}_+\}.
\end{eqnarray*}
In particular, Theorem \ref{properties} implies that the sets ${\cal A}$ and ${\cal B}$ are connected components of $\R^m_+$.

The first coordinate $\lambda^*(\sigma)$ of $\Phi(\sigma)$ may be bounded or unbounded depending on the growth of the first coordinate $f_1(x,t)$ of the map $F(x,t)$ on the variable $t$. We discuss about this point in the following two remarks:

\begin{remark} $\lambda^*(\sigma)$ is bounded if

\[
f_1(x,t_1,0, \ldots,0) \geq \tau(x) t_1\ {\rm for}\ x \in \Omega\ {\rm a.e.\ and}\ t_1 \geq 0,
\]
where $\tau \in L^n(\Omega)$ and $\tau(x) > 0$ for $x \in \Omega$ almost everywhere. This is a consequence of the strong maximum principle satisfied by the operator ${\cal L}_1$, by assumption. Let $\varphi_1 \in W^{2,n}(\Omega) \cap W_0^{1,n}(\Omega)$ be a positive Dirichlet eigenfunction corresponding to the principal eigenvalue $\mu_1 = \mu_1(-{\cal L}_1, \Omega, \tau)$, that is,

\[
\left\{
\begin{array}{rlllr}
-{\cal L}_1 \varphi_1 &=& \mu \tau(x) \varphi_1 & {\rm in} & \Omega, \\

\varphi_1 &=&0 & {\rm on} & \partial \Omega.
\end{array}\right.
\]
The existence of the couple $(\mu_1, \varphi_1)$ is guaranteed by Theorem 6.4 of \cite{LoGo1996} along with a usual approximation argument. We claim that $\lambda \leq \mu_1$ for every $0 < \lambda < \lambda^*(\sigma)$, so that $\lambda^*(\sigma)$ is bounded above by $\mu_1$ and the assertion follows. In fact, assume by contradiction that $\lambda > \mu_1$ for some $0 < \lambda < \lambda^*(\sigma)$ and set $\Lambda = (\lambda, \lambda \sigma)$. By the above characterization of ${\cal A}$, one has $\Lambda \in {\cal A}$ and so, by Theorem \ref{separation}, \eqref{1} admits a minimal positive strong solution $u_\Lambda = (u_1, \ldots, u_m)$. Consider now the set $S = \{s > 0:\, u_1 > s \varphi_1\ {\rm in}\ \Omega\}$ which, by strong maximum principle and Hopf's lemma, is clearly nonempty and bounded above. Let $s^* = \sup S > 0$. Then, $u_1 \geq s^* \varphi_1$ in $\Omega$ and

\begin{eqnarray*}
-{\cal L}_1 (u_1 - s^* \varphi_1) &=& \lambda f_1(x, u_\Lambda) - s^* \mu_1 \tau(x) \varphi_1 \\
&\geq& \lambda \tau(x) u_1 - s^* \mu_1 \tau(x) \varphi_1 \\
&\geq& s^* (\lambda - \mu_1) \tau(x) \varphi_1 \\
&>& 0\ \ {\rm in}\ \Omega
\end{eqnarray*}
Using again strong maximum principle and Hopf's lemma, we derive the contradiction $u_1 \geq (s^* + \varepsilon) \varphi_1$ in $\Omega$ for $\varepsilon > 0$ small enough.
\end{remark}

\begin{remark} $\lambda^*(\sigma)$ is unbounded if for any $\lambda > 0$ there exists $t_0 = (t_1, \ldots, t_m) \in \R^m_+$ such that

\[
\lambda ||f_1(\cdot, t_0)||_{L^n(\Omega)} \leq t_1\, .
\]
Let a fixed $\lambda_0 > 0$ and consider a constant $C_0 > 0$ such that, for any $w \in W^{2,n}(\Omega) \cap W_0^{1,n}(\Omega)$,

\begin{gather}\tag{1.3} \label{AE}
||w||_{L^\infty(\Omega)} \leq C_0 ||{\cal L}_1 w||_{L^n(\Omega)}\, .
\end{gather}
By assumption, there exists $t_0 = (t_1, \ldots, t_m) \in \R^m_+$ such that

\[
\lambda_0 C_0 ||f_1(\cdot, t_0)||_{L^n(\Omega)} \leq t_1\, .
\]
For each $i = 2, \ldots, m$, let $w_i \in W^{2,n}(\Omega) \cap W_0^{1,n}(\Omega)$ be the strong solution of the problem

\[
    \left\{
\begin{array}{rlllr}
-{\cal L}_i w &=& f_i(x,t_0) & {\rm in} & \Omega, \\

w&=&0 & {\rm on} & \partial \Omega
\end{array}\right.
    \]
Choose $s > 0$ such that $\overline{u}_i:= s w_i \leq t_i$ in $\Omega$ for $i = 2, \ldots, m$. Now let $\overline{u}_1 \in W^{2,n}(\Omega) \cap W_0^{1,n}(\Omega)$ be the strong solution of

\[
    \left\{
\begin{array}{rlllr}
-{\cal L}_1 w &=& \lambda_0 f_1(x,t_1, \overline{u}_2, \ldots, \overline{u}_m) & {\rm in} & \Omega, \\

w&=&0 & {\rm on} & \partial \Omega
\end{array}\right.
    \]
Thanks to the initial assumption and estimate \eqref{AE}, we have $\overline{u}_1 \leq t_1$ in $\Omega$, so that $\overline{u}_0:= (\overline{u}_1, \ldots, \overline{u}_m) \leq t_0$. But this inequality together with the definition of $\overline{u}_i$ and the assumption (B) imply that $\overline{u}_0$ is a positive supersolution of

\[
\left\{
\begin{array}{rlllr}
-{\cal L} u &=& \Lambda_0 F(x,u) & {\rm in} & \Omega, \\

u&=&0 & {\rm on} & \partial \Omega,
\end{array}\right.
\]
where $\Lambda_0 = (\lambda_0, \lambda_0 \sigma) \in \R^m_+$ and $\sigma = (s/\lambda_0, \ldots, s/\lambda_0) \in \R^{m-1}_+$. Then, noting that the zero map $0 = (0, \ldots, 0)$ is a subsolution of the above problem, one concludes that $\Lambda_0 \in {\cal A}$, by the super-subsolution method (see the proof of Theorem \ref{separation} in Section 3), so that $\lambda^*(\sigma) \geq \lambda_0$. Since $\lambda_0 > 0$ is an arbitrary number, it follows the desired conclusion.
\end{remark}

For the existence of solution on $\Lambda^*$ we require that the coefficients $a^i_{kl}$ and $b^i_j$ of ${\cal L}_i$ belong respectively to $C^2(\overline{\Omega})$ and $C^1(\overline{\Omega})$ for every $i, j, k, l$. Before, we present the notion of weak solution for \eqref{1} (see Definition 1 of \cite{BCMR}).

\begin{defi} Let $\Lambda = (\lambda_1, \ldots, \lambda_m) \in \R_+^m$. We say that $u \in L^1(\Omega; \R^m)$ is a nonnegative weak solution of \eqref{1}, if $u \geq 0$ almost everywhere in $\Omega$,
\begin{gather}\tag{1.4}\label{weak_solution_def_1}
F(\cdot,u(\cdot))\delta(\cdot) \in L^1(\Omega; \R^m)
\end{gather}
and

\[
-\int_\Omega u {\cal L}^* \zeta dx = \Lambda \int_\Omega F(x,u)\zeta dx
\]
for all $\zeta \in W^{2,n}(\Omega; \R^m) \cap W_0^{1,n}(\Omega; \R^m)$ such that ${\cal L}^* \zeta \in L^\infty(\Omega; \R^m)$.
\end{defi}

The next result establish the existence of a unique minimal nonnegative weak solution, called extremal solution.

\begin{theo}\label{fraca}
Let $F:\Omega\times \R^m \to \R^m$ be a map such that $F(x, \cdot): \R^m \to \R^m$ is continuous for $x \in \Omega$ almost everywhere and $F(\cdot, t): \Omega \to \R^m$ belongs to $L^n(\Omega; \R^m)$ for every $t \in \R^m$. Assume that $F$ satisfies (A), (B) and (C) and also $a^i_{kl} \in C^2(\overline{\Omega})$ and $b^i_j \in C^1(\overline{\Omega})$ for every $i, j, k, l$. Then, for any $\Lambda \in \Lambda^*$, Problem \eqref{1} admits an extremal solution $u^*_\Lambda$.
\end{theo}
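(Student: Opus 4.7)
The plan is to adapt, to the coupled system \eqref{1}, the Brezis-Cazenave-Martel-Ramiandrisoa construction already sketched for the scalar case in the introduction, with hypothesis (C) and the identity $\Pi\,\alpha = 1$ replacing scalar superlinearity. Fix $\Lambda \in \Lambda^*$ and, using the parametrization of Theorem \ref{properties}, write $\Lambda = (\lambda^*(\sigma),\lambda^*(\sigma)\sigma)$. Choose $t_n \in (0,1)$ with $t_n \uparrow 1$ and set $\Lambda_n := t_n\Lambda$; by the characterization of $\mathcal{A}$ that follows Theorem \ref{properties}, each $\Lambda_n \in \mathcal{A}$, so by Theorem \ref{separation} there is a minimal positive strong solution $u_n \in W^{2,n}(\Omega;\R^m) \cap W_0^{1,n}(\Omega;\R^m)$, and the monotonicity of $\Lambda \mapsto u_\Lambda$ makes $u_n$ nondecreasing in $n$. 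The goal is to show that $u^* := \lim u_n$ lies in $L^1(\Omega;\R^m)$, that $F(\cdot,u^*)\delta \in L^1(\Omega;\R^m)$, and that $u^*$ solves \eqref{1} weakly at $\Lambda$.

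The additional smoothness $a^i_{kl} \in C^2(\overline{\Omega})$, $b^i_j \in C^1(\overline{\Omega})$ is exactly what makes the adjoint $\mathcal{L}_i^*$ well-defined with $L^\infty$ lower-order coefficients and, via Theorem 6.4 of \cite{LoGo1996}, provides positive principal eigenfunctions $\varphi_i^* \in W^{2,n}(\Omega) \cap W_0^{1,n}(\Omega)$ of $-\mathcal{L}_i^*$ with eigenvalue $\mu_i := \mu_1(-\mathcal{L}_i,\Omega) > 0$. Testing the $i$-th equation against $\varphi_i^*$ and using (C) in the globally valid form $f_i(x,t) \geq \kappa\rho_i(x) t_{i+1}^{\alpha_i} - g_i^{(\kappa)}(x)$, where $g_i^{(\kappa)} \in L^n(\Omega)$ absorbs the bounded region $|t| \leq M(\kappa)$ through (A) and (B), produces
$$\mu_i\, I_{n,i} \;=\; t_n\lambda_i \int_\Omega f_i(x,u_n)\varphi_i^*\, dx \;\geq\; t_n\lambda_i\,\kappa \int_\Omega \rho_i\, u_{n,i+1}^{\alpha_i}\,\varphi_i^*\, dx - C_{i,\kappa},$$
where $I_{n,i} := \int_\Omega u_{n,i}\varphi_i^*\, dx$. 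Iterating these $m$ inequalities cyclically and applying H\"older at each step to convert the weighted $\alpha_i$-power of $u_{n,i+1}$ into a weighted $L^1$ norm, the accumulated exponent $\alpha_1\cdots\alpha_m = 1$ collapses the composition to a self-bound
$$I_{n,1} \;\leq\; A(\kappa)\, I_{n,1} + B(\kappa)$$
with $A(\kappa) \to 0$ as $\kappa \to \infty$. Fixing $\kappa$ large enough that $A(\kappa) < 1$ yields $\sup_n I_{n,i} < \infty$ for every $i$, and hence $\sup_n \int_\Omega f_i(x,u_n)\varphi_i^*\, dx < \infty$.

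From here the rest follows the scalar template. Letting $\zeta_i^* \in W^{2,n}(\Omega) \cap W_0^{1,n}(\Omega)$ solve $-\mathcal{L}_i^*\zeta_i^* = 1$, Hopf's lemma gives $\zeta_i^* \leq C\varphi_i^*$ and $\delta \leq C\varphi_i^*$, converting the previous bounds into $\sup_n \int_\Omega u_{n,i}\, dx < \infty$ and $\sup_n \int_\Omega f_i(x,u_n)\delta\, dx < \infty$. Monotone convergence combined with (B) then produces $u_n \uparrow u^*$ in $L^1$ with $F(\cdot,u_n) \uparrow F(\cdot,u^*)$ a.e.\ and $F(\cdot,u^*)\delta \in L^1(\Omega;\R^m)$; for an arbitrary admissible test $\zeta$ with $\mathcal{L}^*\zeta \in L^\infty$, the bound $|\zeta_i| \leq C\delta$ (since $\zeta_i$ vanishes on $\partial\Omega$) lets Lebesgue dominated convergence pass the limit in the strong identity $-\int u_{n,i}\mathcal{L}_i^*\zeta_i\, dx = t_n\lambda_i \int f_i(x,u_n)\zeta_i\, dx$, producing the weak formulation for $u^*$ at level $\Lambda$, with uniqueness and minimality supplied by Lemma \ref{uniqueness-weak-solutions-lemma}. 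The main obstacle is the cyclic H\"older iteration in the previous paragraph: when some $\alpha_i < 1$ a single Jensen step goes the wrong way, so one must either group several consecutive steps along the cycle so that only the aggregate exponent $\Pi\,\alpha = 1$ appears or invoke reverse-H\"older estimates, all while keeping careful track of how the weights $\rho_i$ and $\varphi_i^*$ accumulate so that the multiplicative contraction $A(\kappa)$ genuinely vanishes as $\kappa \to \infty$ rather than stabilizing at a positive constant.
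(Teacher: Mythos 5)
Your approximation scheme and limit-passage are exactly the paper's: take $\lambda_k \uparrow \lambda^*(\sigma)$, show the minimal solutions $u_k$ are bounded in $L^1$ together with $F(\cdot,u_k)\delta$, then pass to the limit via monotone convergence and test functions. The extra smoothness of $a^i_{kl}, b^i_j$ is used in both cases to invoke the adjoint problem. So the skeleton matches.

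But the step on which everything hangs — the uniform $L^1$ bound — is precisely the place where your argument does not close, and you say so yourself. The cyclic H\"older iteration you describe fails for a real reason: when $\alpha_i < 1$, H\"older/Jensen gives an \emph{upper} bound $\int \rho_i\, u_{n,i+1}^{\alpha_i}\varphi_i^*\,dx \lesssim \left(\int u_{n,i+1}\varphi_i^*\,dx\right)^{\alpha_i}$, which is the wrong direction when that term appears on the right side of your chain of lower estimates; and "grouping consecutive steps so only $\Pi\,\alpha = 1$ appears'' is not something H\"older gives you for free, since the intermediate factors involve different weights $\rho_i\varphi_i^*$. The paper avoids this obstacle with two ingredients you do not use. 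First, Lemma \ref{HL-estimate}: a global Green's function lower bound on $C^{1,1}$ domains (Ancona, Hueber--Sieveking, Zhao) giving $u_{i+1}^k(x) \geq C_1C_2\,\delta(x)\,\|u_{i+1}^k\|_{L^1}$ pointwise. This converts an $L^1$ norm into a pointwise lower bound \emph{before} any power is taken, so raising to the power $\alpha_l$ is harmless regardless of whether $\alpha_l$ is above or below $1$: one simply gets $\rho_l(x)(u_{l+1}^k)^{\alpha_l} \geq (C_1C_2)^{\alpha_l}\rho_l(x)\delta(x)^{\alpha_l}\|u_{l+1}^k\|_{L^1}^{\alpha_l}$. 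Second, a pigeonhole step: setting $s_1 = 1$, $s_{i+1} = \alpha_i s_i$ (cyclic since $\Pi\,\alpha = 1$), one selects a leading index $l$ with $\|u_{l+1}^k\|_{L^1}^{s_{l+1}} \geq \|u_i^k\|_{L^1}^{s_i}$ for all $i$. With that single well-chosen equation one tests against $\zeta_l^*$ (solving $-\mathcal{L}_l^*\zeta_l^* = 1$), uses (5.2) in the form $F \geq \kappa\rho S_\alpha - B\rho$, and obtains $1 \geq \kappa\,(\text{positive constant}) - o(1)$, a contradiction for $\kappa$ large. The cyclic iteration that blocks you never arises.

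In short: the framework is right but the central $L^1$ bound is missing, and the fix is not H\"older but the Green's function estimate $G_{\mathcal{L}}(x,y) \geq C\delta(x)\delta(y)$ plus a leading-index pigeonhole, which collapses the would-be cycle to a single step.
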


We now introduce the concept of linearized stability for strong solutions of \eqref{1} which can be seen as steady states of the parabolic problem

\begin{gather}\tag{1.5} \label{3}
\left\{
\begin{array}{rlllr}
\displaystyle{\frac{\partial u}{\partial t}} -{\cal L} u &=& \Lambda F(x,u) & {\rm in} & \Omega \times (0, \infty), \\
u(x,0) &=& u_0(x) & {\rm for} & \ x \in \Omega,\\
u(x,t)&=&0 & {\rm for} & (t,x) \in \partial \Omega \times (0, \infty),
\end{array}\right.
\end{gather}
where $u(x,t) = (u_1(x,t), \ldots, u_m(x,t))$ and  $F(x,t) = (f_1(x,t), \ldots, f_m(x,t))$.

Let $u$ be a solution of the system in \eqref{3}. For a suitable notion of stability of $u$, one hopes naturally that for any globally defined strong solution of the form $v(x,t) = u(x) + e^{-\eta t} \varphi(x)$, one necessarily has $\eta > 0$. Assuming that $F(x,\cdot)$ is of $C^1$ class for $x \in \Omega$ almost everywhere, writing the system for $v(x,t)$ and letting $t \rightarrow \infty$, we get an eigenvalue problem under the form of elliptic system, namely,

\begin{gather}\tag{1.6} \label{4}
\left\{
\begin{array}{rlllr}
-{\cal L} \varphi - \Lambda (A(x,u) \varphi) &=& \eta \varphi & {\rm in} & \Omega, \\
\varphi &=& 0 & {\rm on}& \partial \Omega,
\end{array}\right.
\end{gather}
where $A(x,t)$ denotes the matrix $[A_{ij}(x,t)]$ with entries

\[
A_{ij}(x,t) := \frac{\partial f_i}{\partial t_j}(x,t)\, .
\]

\begin{defi}\label{stability-definition}
A steady state $u$ of the problem \eqref{3} is said to be stable in the linearized sense, if the system \eqref{4} has a nonnegative principal eigenvalue $\eta_1$. In addition, in case $\eta_1$ is positive, we say that $u$ is asymptotically stable.
\end{defi}

An underlying question is first whether \eqref{4} admits a principal eigenvalue. Its existence is only ensured under additional conditions on the matrix $A(x,t)$ such as:

\begin{gather}\tag{1.7} \label{5}
\sup_{|t| \leq a}|A(x,t)| \in L^n(\Omega)\ \ {\rm for \ every} \ a>0
\end{gather}
and

\begin{itemize}
\item[(D)] $A(x,u(x))$ is irreducible for every $x \in \Omega$ whenever $u$ is positive in $\Omega$.
\end{itemize}

Clearly, the assumptions (B) and (D) imply that each matrix $A(x,u(x))$ is cooperative and irreducible, provided that $u$ is positive. On the other hand, cooperativity and irreducibility are well-known conditions in the literature that yield the existence of a principal eigenvalue $\eta_1$, see for example the monograph \cite{Am} and various references therein.

It also deserves mention that (D) holds for example when $A_{ij}(x,t) \geq 0$ for all $x \in \Omega$ a.e., $t > 0$ and $i \neq j$ and, in addition, $|\{ x \in \Omega:\ A_{is_i}(x,t) > 0, \forall t > 0 \}| >0$ for all $i$, where $s$ is the shift permutation of $\{1, \ldots, m\}$, that is, $s_i = i+1$ for $i = 1, \ldots, m-1$ and $s_m = 1$. Indeed, we have $A_{is_i}(\cdot,u(\cdot)) \not\equiv 0$ for all $i$ and so the irreducibility of $A(x,u(x))$ readily follows by using that $s$ is a permutation of order $m$. Besides, in several examples, the nonlinear shift $S_\alpha$ and the shift permutation $s$ are strongly connected.

\begin{theo}\label{stability}
Let $F:\Omega\times \R^m \to \R^m$ be a map such that $F(x, \cdot): \R^m \to \R^m$ is of $C^1$ class uniformly on $x \in \Omega$ and the second variable in compacts of $\R^m$, $F(\cdot, t): \Omega \to \R^m$ belongs to $L^n(\Omega; \R^m)$ for every $t \in \R^m$ and \eqref{5} is satisfied. Assume also that $F$ satisfies (A), (B), (C) and (D). Then, for any $\Lambda \in {\cal A}$, the minimal positive strong solution $u_\Lambda$ is stable. Moreover, if $F$ is convex and $a^i_{kl} \in C^2(\overline{\Omega})$ and $b^i_j \in C^1(\overline{\Omega})$ for every $i, j, k, l$, then $u_\Lambda$ is asymptotically stable.
\end{theo}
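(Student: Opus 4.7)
The plan is to establish both assertions by a comparison argument exploiting the monotonicity of $\Lambda\mapsto u_\Lambda$ from Theorem \ref{separation}, combined with Krein-Rutman spectral theory for cooperative irreducible elliptic systems as in \cite{Am}. First I would verify that \eqref{4} admits a principal eigenvalue $\eta_1$ with positive eigenfunction $\varphi\in W^{2,n}(\Omega;\R^m)\cap W_0^{1,n}(\Omega;\R^m)$. Since $u_\Lambda$ is bounded (via $W^{2,n}\hookrightarrow C^0(\overline\Omega)$), condition \eqref{5} places the entries of $A(\cdot,u_\Lambda(\cdot))$ in $L^n(\Omega)$; the off-diagonals are nonnegative by (B) and the matrix is irreducible a.e.\ by (D), so $-\mathcal{L}-\Lambda A(\cdot,u_\Lambda)$ fits the cooperative fully-coupled framework.

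For stability, I would parametrize $\Lambda=(\lambda,\lambda\sigma)$, choose $0<\lambda'<\lambda$, and set $\Lambda'=(\lambda',\lambda'\sigma)\in\mathcal{A}$. Theorem \ref{separation} yields $u_{\Lambda'}\leq u_\Lambda$; the strong maximum principle applied componentwise to the inequality $-\mathcal{L}_i(u_{\Lambda,i}-u_{\Lambda',i})\geq(\lambda_i-\lambda'_i)f_i(\cdot,u_{\Lambda'})>0$ (which uses (A) and (B)) upgrades this to $v:=u_\Lambda-u_{\Lambda'}>0$ in $\Omega$. A componentwise mean-value computation produces
\[
F(x,u_\Lambda)-F(x,u_{\Lambda'})=B(x)v,\qquad B(x):=\int_0^1 A(x,u_{\Lambda'}+sv)\,ds,
\]
a matrix $B\in L^n$ with nonnegative off-diagonals and irreducible a.e.\ (condition (D) applies to the positive interpolation $u_{\Lambda'}+sv$), so the equation becomes
\[
-\mathcal{L}v-\Lambda(Bv)=(\Lambda-\Lambda')F(x,u_{\Lambda'})>0 \quad \text{in } \Omega.
\]
Existence of this positive strict supersolution forces $\eta_1(-\mathcal{L}-\Lambda B)>0$ by the supersolution characterization of the principal eigenvalue for cooperative irreducible systems (\cite{Am}). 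Letting $\lambda'\uparrow\lambda$, the continuity assertion of Theorem \ref{separation} together with Sobolev embedding yields $u_{\Lambda'}\to u_\Lambda$ uniformly, whence $B\to A(\cdot,u_\Lambda)$ in $L^n$ by dominated convergence; continuity of the principal eigenvalue under $L^n$ perturbations of the coupling matrix then gives $\eta_1=\lim_{\lambda'\uparrow\lambda}\eta_1(-\mathcal{L}-\Lambda B)\geq 0$.

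For asymptotic stability, under convexity of $F$ and smoothness $a^i_{kl}\in C^2(\overline\Omega)$, $b^i_j\in C^1(\overline\Omega)$, I would argue by contradiction, assuming $\eta_1=0$. The formal adjoint $\mathcal{L}^*$ then has $L^\infty$ coefficients, and the adjoint linearized system associated to the transpose of $M=[\lambda_i A_{ij}(\cdot,u_\Lambda)]$ is again cooperative and irreducible with principal eigenvalue $\eta_1=0$ and positive adjoint eigenfunction $\varphi^*\in W^{2,n}(\Omega;\R^m)\cap W_0^{1,n}(\Omega;\R^m)$. Choosing $\lambda'\in(\lambda,\lambda^*(\sigma))$, setting $v=u_{\Lambda'}-u_\Lambda>0$, and using componentwise convexity of each $f_i$ in $t$ to get $F(x,u_{\Lambda'})-F(x,u_\Lambda)\geq A(x,u_\Lambda)v$, I would derive
\[
-\mathcal{L}v-\Lambda(A(x,u_\Lambda)v)\geq(\Lambda'-\Lambda)F(x,u_{\Lambda'})>0 \quad \text{in } \Omega.
\]
Testing this against $\varphi^*$ and integrating by parts twice (legitimate because $v,\varphi^*\in W^{2,n}$ with vanishing trace and coefficients are $C^2/C^1$) produces
\[
0=\int_\Omega v\cdot[-\mathcal{L}^*\varphi^*-M^T\varphi^*]\,dx=\int_\Omega[-\mathcal{L}v-Mv]\cdot\varphi^*\,dx\geq\int_\Omega(\Lambda'-\Lambda)F(x,u_{\Lambda'})\cdot\varphi^*\,dx>0,
\]
a contradiction, so $\eta_1>0$.

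The main obstacle I anticipate is justifying the continuous dependence of the principal eigenvalue on the $L^n$ coupling matrix in the cooperative irreducible system setting, required in the stability step; I would handle it through uniform a priori $L^\infty$ bounds on normalized positive eigenfunctions (via Alexandrov-Bakelman-Pucci type estimates applied componentwise), followed by a $W^{2,n}$ compactness argument leveraging the simplicity of the principal eigenvalue. A secondary point is to verify that the interpolation matrix $B(x)$ above inherits irreducibility, which reduces to pointwise positivity of $u_{\Lambda'}+sv$ for $s\in[0,1]$, a consequence of $u_{\Lambda'}>0$ and $v>0$.
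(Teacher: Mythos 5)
Your overall strategy is sound and your asymptotic-stability argument is essentially the paper's: the identity you obtain by testing $(-\mathcal{L}-\Lambda A(x,u_\Lambda))(u_{\Lambda'}-u_\Lambda)\geq(\Lambda'-\Lambda)F(x,u_{\Lambda'})>0$ against the positive adjoint eigenfunction $\varphi^*$ is exactly the index-heavy computation the paper carries out after choosing $\Upsilon>\Lambda$, and your streamlined presentation is, if anything, clearer.

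The stability part, however, takes a genuinely different and heavier route. The paper's proof is a direct minimality argument: assuming $\eta_1<0$ with positive eigenfunction $\varphi$, it shows that $\overline u_\varepsilon:=u_\Lambda-\varepsilon\varphi$ is, for $\varepsilon$ small, a positive strict supersolution of \eqref{1}; because it vanishes on $\partial\Omega$ and $0$ is a subsolution by (A), the monotone iteration of Lemma \ref{existence_lema1} produces a positive strong solution $\leq\overline u_\varepsilon<u_\Lambda$, contradicting minimality of $u_\Lambda$. The only analytic input beyond Hopf's lemma is the uniform Taylor remainder bound $r(x,\varepsilon\varphi)=o(\varepsilon\sum_j\varphi_j)$, which comes from the ``$C^1$ uniformly on compacta'' hypothesis on $F$. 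Your route instead constructs, for each $\lambda'<\lambda$, the interpolation matrix $B_{\lambda'}(x)=\int_0^1 A(x,u_{\Lambda'}+s(u_\Lambda-u_{\Lambda'}))\,ds$, uses $v=u_\Lambda-u_{\Lambda'}$ as a strict positive supersolution of $-\mathcal{L}-\Lambda B_{\lambda'}$ to force $\eta_1(-\mathcal{L}-\Lambda B_{\lambda'})>0$, and then passes to the limit $\lambda'\uparrow\lambda$. This is logically correct, and your auxiliary verifications (nonnegativity and irreducibility of $B_{\lambda'}$ from (B), (D) and positivity of the convex interpolation $u_{\Lambda'}+sv$) are right. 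But the final step \emph{requires} a continuity theorem for the principal eigenvalue of cooperative irreducible systems under $L^n$ perturbations of the zero-order matrix, and you cannot conclude $\eta_1\geq0$ without it (note $v\to0$ as $\lambda'\uparrow\lambda$, so the supersolutions themselves give nothing in the limit). You correctly flag this as the main obstacle and your proposed ABP-plus-$W^{2,n}$-compactness sketch is plausible, but it imports a nontrivial lemma that the paper's direct construction sidesteps entirely. In short: your stability proof is workable but strictly more machinery-heavy than the one in the paper; the paper's observation that an eigenfunction with $\eta_1<0$ manufactures a smaller positive supersolution and hence contradicts minimality is the more economical argument.
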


\noindent Here, by a $C^1$ map $F(x,t)$ on the variable $t$ uniformly on $x \in \Omega$ and $t$ in compacts of $\R^m$, we mean that $F(x,t)$ is of $C^1$ class on $t \in \R^m$ for each fixed $x \in \Omega$ and, in addition, for any compact subset $K \subset \R^m$,

\[
F(x,t) = F(x,t_0) + A(x,t_0)(t-t_0) + r(x,t,t_0),
\]
where

\[
\lim_{t \rightarrow t_0} \frac{r(x,t,t_0)}{|t - t_0|} = 0
\]
uniformly on $x \in \Omega$ and $t_0 \in K$.

Theorems \ref{separation}, \ref{properties}, \ref{fraca} and \ref{stability} provide substantial improvements in the case $m = 2$, previously studied in \cite{Montenegro2005}, and also new contributions for $m \geq 3$ under the milder assumptions (A), (B), (C) and (D).

Finally, we wish to discuss the boundedness of extremal solutions $u^*_{\Lambda}$ of \eqref{1} for any $m \geq 1$ in the case that all ${\cal L}_i$ are Laplacian and $F: \R^m \rightarrow \R^m$ is a potential field. More precisely, given a function $f:\mathbb{R}^m\rightarrow\mathbb{R}$ of $C^2$ class, we consider the problem for the gradient field $F = \nabla{f}$:

\begin{gather}\tag{1.8}\label{gradient-system-intro}
\left\{
\begin{array}{rlllr}
-{\Delta} u &=& \Lambda \nabla f(u) & {\rm in}& \Omega, \\
u&=&0 & {\rm on} & \partial \Omega.
\end{array}\right.
\end{gather}
The system \eqref{gradient-system-intro} is called symmetric in the sense that the matrices $A(x,t) = \Hessian f(t)$ associated to the linearized problem are symmetric.

For $m=2$, Cowan and Fazly \cite{CowanFazly2014} showed that the extremal solutions $u^*_{\Lambda}$ with $\Lambda \in \Lambda^*$ are bounded for convex domains in dimensions $n=2$ and $n=3$ and for functions $f$ of separable variables, that is, $f(u_1,u_2)=f_1(u_1)f_2(u_2)$, where $f_1$ and $f_2$ satisfy some conditions that lead to (A), (B), (C) and (D). When $\Omega$ is a ball in $\mathbb{R}^n$, the authors also proved the boundedness of corresponding extremals provided that $2 \leq n \leq 9$. Previous results involving regularity of extremal solutions can be found in \cite{Cowan2011}.

When $m\geq3$ and $\Omega = B$ is the unit ball, Fazly \cite{Fazly2015} established for symmetric systems that stable minimal classical solutions $u_\Lambda = (u_1, \ldots, u_m)$, which are radial for any $\Lambda = (\lambda_1, \ldots, \lambda_m) \in \mathcal{A}$, satisfy

\begin{itemize}

\item[(i)] $\sum_{i=1}^m \frac{1}{\sqrt{\lambda_i}} |u_i(r)| \leq C_{n,m}\sum_{i=1}^m \frac{1}{\sqrt{\lambda_i}}\|u_i\|_{H^1(B\setminus\overline{B}_{1/2})}$ for $r \in (0,1]$, if $2 \leq n \leq 9$;

\item[(ii)] $\sum_{i=1}^m \frac{1}{\sqrt{\lambda_i}} |u_i(r)| \leq C_{n,m}(1+|\log{r}|)\sum_{i=1}^m \frac{1}{\sqrt{\lambda_i}}\|u_i\|_{H^1(B\setminus\overline{B}_{1/2})}$ for $r \in (0,1]$, if $n=10$;

\item[(iii)] $\sum_{i=1}^m \frac{1}{\sqrt{\lambda_i}} |u_i(r)| \leq C_{n,m}r^{-\frac{n}{2}+\sqrt{n-1}+2}\sum_{i=1}^m \frac{1}{\sqrt{\lambda_i}}\|u_i\|_{H^1(B\setminus\overline{B}_{1/2})}$ for $r\in(0,1]$, if $n\geq 11$

\end{itemize}
for some positive constant $C_{n,m}$ independent of $r$, where $B_\delta$ denotes the ball in $\R^n$ of radius $\delta$ centered at the origin. These three estimates were previously obtained for $m = 1$ by Villegas \cite{Villegas2012}.

On the other hand, a straightforward argument produces an estimate for the above terms $\|u_i\|_{H^1(B\setminus\overline{B}_{1/2})}$ regarding $\Lambda = (\lambda_1, \ldots, \lambda_m) \in {\cal A}$ near the extremal set $\Lambda^*$. In fact, let $\varphi\in C^{\infty}(\mathbb{R}^n)$ be a fixed function such that $0\leq\varphi\leq1$, $\varphi=0$ in $B_{1/4}$ and $\varphi=1$ in $B\setminus\overline{B}_{1/2}$. Multiplying the $i$th equation of \eqref{gradient-system-intro}, $-\Delta u_i=\lambda_i f_{u_i}(u_\Lambda)$, by $u_i\varphi^2$, integrating by parts and using Young's inequality and that $u_i$ is radially decreasing, we derive

\begin{eqnarray*}
\int_{B\setminus \overline{B}_{1/4}}|\nabla u_i|^2\varphi^2 dx & = &  \lambda_i\int_{B\setminus \overline{B}_{1/4}} f_{u_i}(u_\Lambda)u_i\varphi^2dx + \int_{\partial B_{1/4}}\dfrac{\partial u_i}{\partial \nu}u_i\varphi^2d\sigma - \int_{B\setminus \overline{B}_{1/4}} 2u_i\varphi\nabla u_i\cdot\nabla\varphi dx\\
& \leq & \lambda_i\int_{B\setminus \overline{B}_{1/4}} f_{u_i}(u_\Lambda)u_idx + 2\int_{B\setminus \overline{B}_{1/4}} |\nabla\varphi|^2u^2_idx + \dfrac{1}{2}\int_{B\setminus \overline{B}_{1/4}} |\nabla u_i|^2\varphi^2dx.
\end{eqnarray*}
Thus,

\begin{gather}\tag{1.9}\label{ball-1}
\int_{B\setminus\overline{B}_{1/2}}|\nabla u_i|^2dx \leq  2\lambda_i\int_{B\setminus \overline{B}_{1/4}} f_{u_i}(u_\Lambda)u_idx + C\int_{B\setminus \overline{B}_{1/4}} u^2_idx.
\end{gather}
Moreover, we also have
\begin{gather}\tag{1.10}\label{ball-2}
\sup_{B\setminus \overline{B}_{1/4}} u_i(x)=u_i(1/4)\leq \dfrac{1}{|B_{1/4}\setminus \overline{B}_{1/8}|}\int_{B_{1/4}\setminus \overline{B}_{1/8} }u_i(x)dx\leq C_n\|u_i\|_{L^1(B)}\leq C_n\|u_\Lambda\|_{L^1(B;\mathbb{R}^m)}.
\end{gather}
Therefore, for a fixed $\sigma \in \R^{m-1}_+$, Lemma \ref{boundedness-of-u} in Section 5 and (\ref{ball-1}) and (\ref{ball-2}) provide, for any $\frac{1}{2}\lambda^*(\sigma) < \lambda < \lambda^*(\sigma)$,

\[
\|u_\Lambda\|_{H^1(B\setminus \overline{B}_{1/2}; \R^m)} \leq C
\]
for some positive constant $C$ independent of $\lambda$. In particular, if $(\lambda_k)$ is an increasing sequence converging to $\lambda^*(\sigma)$, then the above estimate holds for $\Lambda_k = (\lambda_k, \lambda_k \sigma)$ with $k$ large enough. Hence, plugging the radial solutions $u_{\Lambda_k}$ in the above inequalities (i), (ii) and (iii) and after letting $k \rightarrow \infty$, we deduce the following result for the extremal solution $u^*_\Lambda$:

\begin{theo}
Let $F:\mathbb{R}^m\rightarrow\mathbb{R}^m$ be a $C^1$ potential field satisfying (A), (B), (C) and (D). If $\Omega = B$, then the extremal solution $u_{\Lambda}^*$ of (\ref{gradient-system-intro}) is radial for any $\Lambda \in \Lambda^*$ and, in addition, satisfies

\begin{itemize}

\item[(I)] $u_{\Lambda}^* \in L^{\infty}(B)$, if $2 \leq n \leq 9$;

\item[(II)] $u_{\Lambda}^*(r) \leq C(1+|\log{r}|)$ for $r\in(0,1]$, if $n=10$;

\item[(III)] $u_{\Lambda}^*(r) \leq Cr^{-\frac{n}{2}+\sqrt{n-1}+2}$ for $r\in(0,1]$, if $n \geq 11$,
\end{itemize}
where $C > 0$ is a constant depending only on $n$, $m$ and $\Lambda$. In particular, $u^*_{\Lambda}$ is bounded whenever $2 \leq n \leq 9$.
\end{theo}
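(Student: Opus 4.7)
The plan is to assemble the three ingredients that have already been set up in the paragraph preceding the theorem: Fazly's pointwise estimates (i)--(iii) for radial stable minimal classical solutions, the uniform $H^1(B\setminus\overline{B}_{1/2};\mathbb{R}^m)$ bound derived from (1.9)--(1.10) together with Lemma \ref{boundedness-of-u}, and a limiting argument $\Lambda_k \uparrow \Lambda \in \Lambda^*$ that transfers these estimates to the extremal solution $u^*_\Lambda$.

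First I would establish radiality of $u_\Lambda$ for every $\Lambda \in \mathcal{A}$. Since $\Omega = B$, $\Delta$, and $F = \nabla f$ are all invariant under orthogonal transformations, for any $R \in O(n)$ the map $u_\Lambda \circ R$ is again a minimal positive strong solution of \eqref{gradient-system-intro}. By the uniqueness assertion of Theorem \ref{separation}, we conclude $u_\Lambda \circ R = u_\Lambda$, hence $u_\Lambda$ is radial. Since by Theorem \ref{fraca} the extremal $u^*_\Lambda$ is obtained as the monotone pointwise limit $u^*_\Lambda = \lim_{\Lambda' \uparrow \Lambda} u_{\Lambda'}$ along a ray $\Lambda' = (\lambda,\lambda\sigma)$ approaching $\Lambda \in \Lambda^*$ (cf.\ the construction outlined in the introduction and Theorem \ref{fraca}), radiality passes to the limit.

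Next, since $F = \nabla f$ with $f \in C^2$ satisfies (A), (B), (C), (D), Theorem \ref{stability} gives that $u_\Lambda$ is stable for every $\Lambda \in \mathcal{A}$, and symmetry of $\mathrm{Hess}\,f$ places us exactly in the framework of Fazly's theorem, so the pointwise inequalities (i)--(iii) apply to each $u_\Lambda$. Fix $\sigma\in\mathbb{R}_+^{m-1}$, let $\lambda^*=\lambda^*(\sigma)$, and pick an increasing sequence $\lambda_k\uparrow\lambda^*$ with $\Lambda_k=(\lambda_k,\lambda_k\sigma)\in\mathcal{A}$. The estimates (1.9), (1.10) combined with Lemma \ref{boundedness-of-u} (which controls $\int_\Omega f_{u_i}(u_{\Lambda_k})u_i\,dx$ and $\|u_{\Lambda_k}\|_{L^1}$ uniformly for $\lambda_k \in (\tfrac12\lambda^*,\lambda^*)$) yield a constant $C$, independent of $k$, such that
\[
\|u_{\Lambda_k}\|_{H^1(B\setminus\overline{B}_{1/2};\mathbb{R}^m)} \leq C.
\]
Substituting this uniform bound into (i)--(iii) produces pointwise bounds on $u_{\Lambda_k}(r)$ of the appropriate form, with constants independent of $k$.

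Finally, since $u_{\Lambda_k}$ is nondecreasing in $k$ and converges pointwise a.e.\ to $u^*_\Lambda$ (by Theorem \ref{fraca} and monotone convergence along the ray), these pointwise bounds are inherited by $u^*_\Lambda$, giving assertions (I), (II), (III) according as $2\leq n\leq 9$, $n=10$, or $n\geq 11$. The main obstacle I expect is the uniform $H^1$ control: verifying that Lemma \ref{boundedness-of-u} really delivers bounds on both $\|u_{\Lambda_k}\|_{L^1}$ and the ``energy'' integral $\int f_{u_i}(u_{\Lambda_k})u_i\,dx$ uniformly as $\lambda_k\uparrow\lambda^*$, so that the right-hand side of (1.9) stays bounded. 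Once this uniform bound is in hand, the rest of the proof is a direct application of Fazly's estimates and a pointwise passage to the limit.
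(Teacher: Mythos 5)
Your proposal is correct and follows essentially the same route as the paper: Fazly's radial estimates, the uniform $H^1(B\setminus\overline{B}_{1/2};\mathbb{R}^m)$ bound from (1.9)--(1.10) together with Lemma \ref{boundedness-of-u}, and a pointwise passage to the limit along the ray $\Lambda_k\uparrow\Lambda$. The ``main obstacle'' you flag is already closed by (1.10): the $L^1$ bound from Lemma \ref{boundedness-of-u} combined with radial monotonicity gives a uniform $L^\infty$ bound for $u_{\Lambda_k}$ on $B\setminus\overline{B}_{1/4}$, and since $\nabla f$ is continuous this controls both $\int_{B\setminus\overline{B}_{1/4}}f_{u_i}(u_{\Lambda_k})u_i\,dx$ and $\int_{B\setminus\overline{B}_{1/4}}u_i^2\,dx$ uniformly in $k$, so the right-hand side of (1.9) stays bounded.
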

Inspired on the insightful developing done in the scalar context by Cabré in \cite{Cabre2010}, we also study the regularity of extremal solutions of (\ref{gradient-system-intro}) for $C^{1,1}$ domains.

In a precise way:

\begin{theo}\label{teo-extremal-strong}
Let $F:\mathbb{R}^m\rightarrow\mathbb{R}^m$ be a $C^1$ potential field satisfying (A), (B), (C) and (D). Assume that $\Omega$ is convex and $n=2$ or $n=3$. Then, for any $\Lambda \in \Lambda^*$, the extremal solution $u_{\Lambda}^*$ of (\ref{gradient-system-intro}) is bounded.
\end{theo}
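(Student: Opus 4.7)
The plan is to extend Cabr\'e's scalar convex-domain regularity argument from \cite{Cabre2010} to the symmetric gradient system \eqref{gradient-system-intro} by approximating $u_\Lambda^*$ from below along the ray through $\Lambda=\Phi(\sigma)\in\Lambda^*$ and proving a uniform $L^\infty$ bound on the approximants. Choose $\lambda_k\nearrow\lambda^*(\sigma)$ so that $\Lambda_k=(\lambda_k,\lambda_k\sigma)\in{\cal A}$; Theorems \ref{separation}, \ref{fraca} and \ref{stability} provide a stable minimal positive strong solution $u_{\Lambda_k}=(u_{k,1},\ldots,u_{k,m})$ with $u_{\Lambda_k}\to u_\Lambda^*$ in $L^1$. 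Writing $\lambda_{k,1}=\lambda_k$ and $\lambda_{k,i}=\lambda_k\sigma_{i-1}$ for $i\geq 2$, the symmetry $F=\nabla f$ gives the energy functional $E(v)=\sum_i\frac{1}{2\lambda_{k,i}}\int_\Omega|\nabla v_i|^2dx-\int_\Omega f(v)dx$, whose second variation yields the semi-stability inequality
$$
\sum_{i=1}^m \frac{1}{\lambda_{k,i}}\int_\Omega |\nabla\varphi_i|^2\,dx \;\geq\; \int_\Omega \langle \Hessian f(u_{\Lambda_k})\varphi,\varphi\rangle\,dx, \qquad \forall\,\varphi\in H_0^1(\Omega;\R^m).
$$
It suffices to bound $\{u_{\Lambda_k}\}$ uniformly in $L^\infty(\Omega;\R^m)$.

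Next I would differentiate each equation $-\Delta u_{k,i}=\lambda_{k,i}f_i(u_{\Lambda_k})$ in an arbitrary unit direction $e\in\R^n$, so that $\partial_e u_{\Lambda_k}$ solves the linearized system pointwise. Setting $T_{k,i}:=|\nabla u_{k,i}|$ and invoking the componentwise Kato inequality $|\Hessian u_{k,i}|^2\geq|\nabla T_{k,i}|^2$, a computation in the spirit of \cite{Cabre2010} produces a pointwise differential inequality for $T_{k,i}$ relating $-\Delta T_{k,i}$ to the mixed linearization terms $\sum_j f_{ij}(u_{\Lambda_k})\partial_e u_{k,j}$. Inserting $\varphi_i=T_{k,i}\eta$ (with a smooth cutoff $\eta$ equal to $1$ in a neighbourhood of $\partial\Omega$) into the semi-stability inequality and integrating by parts, the crucial boundary contribution takes the form
$$
-\int_{\partial\Omega}\sum_{i=1}^m\frac{1}{\lambda_{k,i}}\, II(\nu,\nu)\,|\partial_\nu u_{k,i}|^2\,d\sigma,
$$
where $II$ is the second fundamental form of $\partial\Omega$; here one uses the boundary identity $\nabla u_{k,i}=(\partial_\nu u_{k,i})\nu$ together with the standard convex-domain computation for $\partial_\nu(|\nabla u_{k,i}|^2)$. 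Convexity of $\Omega$ gives $II\geq 0$, so this boundary term has the favourable sign, and collapsing the stability inequality with the Kato differential inequality yields a uniform estimate $\sum_i \lambda_{k,i}^{-1}\,\|\nabla u_{k,i}\|_{H^1(\Omega)}^2\leq C$.

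Finally I would combine this $H^1$-bound on $|\nabla u_{k,i}|$ with the uniform $L^1$ bound on $u_{k,i}$ furnished by Lemma \ref{boundedness-of-u}. When $n=2$ or $n=3$, the Sobolev embedding $W^{1,2}(\Omega)\hookrightarrow L^q(\Omega)$ (any $q<\infty$ if $n=2$, and $q=6$ if $n=3$) then converts this bound into a uniform $L^q$ bound on each component $u_{k,i}$. A standard De Giorgi--Moser iteration applied to $-\Delta u_{k,i}=\lambda_{k,i}f_i(u_{\Lambda_k})$, using the monotonicity of the $f_i$ granted by (B), promotes the $L^q$-bound to a uniform $L^\infty$-bound. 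Passing to the limit $k\to\infty$ we conclude $u_\Lambda^*\in L^\infty(\Omega;\R^m)$.

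The hardest step will be the second paragraph. In the scalar case the Kato inequality and the stability inequality act on the same function and Cabr\'e's identity decouples cleanly; in the system the coupling through $\Hessian f(u_{\Lambda_k})$ mixes the components and each is weighted by a different $1/\lambda_{k,i}$, so the test function must be selected so that the cross terms recombine non-negatively and the boundary integral still factors $II(\nu,\nu)$ as a common multiplier. In particular one may need to replace the naive choice $\varphi_i=T_{k,i}\eta$ by a rescaled version $\varphi_i=\sqrt{\lambda_{k,i}}\,T_{k,i}\eta$, or by a more subtle combination, to align the weights across components. The componentwise Kato inequality also needs the customary $\varepsilon$-regularization $T_{k,i}\rightsquigarrow\sqrt{T_{k,i}^2+\varepsilon}$ at points where $\nabla u_{k,i}$ vanishes.
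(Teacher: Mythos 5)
Your overall scaffolding (approximation along the ray $\Lambda_k=(\lambda_k,\lambda_k\sigma)\nearrow\Lambda$, stability of the minimal branch, and passing to the limit) matches the paper, and the semi-stability inequality you write is indeed inequality \eqref{stability-inequality} from \cite{Fazly2013}. But the central analytic step you propose is not the one the paper uses, and as stated it has a genuine gap.

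\textbf{Where the approaches diverge.} The paper's proof (Propositions \ref{prop-boundary-estimative}--\ref{prop-boundedness}) follows Cabr\'e's level-set strategy: it does not seek an $H^1$ bound on $|\nabla u_{k,i}|$ at all. Instead it uses the Fazly--Ghoussoub system version \eqref{7.4} of the Sternberg--Zumbrun inequality, which involves the second fundamental form $|A_i|$ of the \emph{level sets} of $u_i$ (not of $\partial\Omega$), tests with $\eta_i=\varphi(u_i)$ for a truncation $\varphi$ vanishing at $0$ (so that $\eta_i\in H_0^1$), runs the coarea formula, and then applies the Michael--Simon inequality \eqref{desg1} and the isoperimetric inequality \eqref{desg2} on level sets to obtain the $L^\infty$ bound \eqref{boudary-estimate}. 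Convexity of $\Omega$ enters only through Troy's moving-planes estimate (Proposition \ref{aux1}\,(ii)), which traps the sublevel sets $\{u_i<t\}$ inside a fixed boundary strip where $u_\Lambda$ is controlled in $L^\infty$ by its $L^1$ norm; a local $W^{1,4}$ estimate on that strip then closes the argument. At no point is a uniform $H^2$ bound on $u_{\Lambda_k}$, or an $H^1$ bound on $|\nabla u_{k,i}|$, derived or used.

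\textbf{The gap in your argument.} You assert that testing stability with $\varphi_i=|\nabla u_{k,i}|\eta$ and exploiting the sign of the boundary term $-\int_{\partial\Omega}\sum_i\lambda_{k,i}^{-1}\,II(\nu,\nu)\,|\partial_\nu u_{k,i}|^2\,d\sigma$ (via convexity of $\Omega$) yields a uniform bound $\sum_i\lambda_{k,i}^{-1}\|\nabla u_{k,i}\|_{H^1}^2\leq C$. This is not substantiated. First, the semi-stability inequality requires $\varphi\in H_0^1(\Omega;\R^m)$, but $|\nabla u_{k,i}|$ does not vanish on $\partial\Omega$ (by Hopf's lemma $\partial_\nu u_{k,i}<0$ there); taking $\eta\equiv1$ near $\partial\Omega$ produces an inadmissible test function, and the boundary term you display does not arise from the stability inequality itself but rather from a Reilly/Bochner-type identity, which is a different object. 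Second, even granting a Reilly-type inequality $\int_\Omega|\Hessian u_{k,i}|^2\leq\int_\Omega(\Delta u_{k,i})^2=\lambda_{k,i}^2\int_\Omega f_{u_i}(u_{\Lambda_k})^2\,dx$ for convex $\Omega$, this only transfers the problem: a uniform bound on $\|f_{u_i}(u_{\Lambda_k})\|_{L^2}$ is precisely what one does not have a priori (if it were available, elliptic regularity plus Morrey would finish immediately for $n\leq3$). Extracting $\|f_{u_i}(u_{\Lambda_k})\|_{L^2}\leq C$ from stability is the content of a Nedev-type argument, which is a separate and nontrivial step not carried out in your proposal and not the path the paper takes. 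Until that step is supplied, the claimed $H^1$ bound on the gradients --- and with it your final Sobolev/Moser step --- is unsupported. Also note that once a uniform $H^2$ bound on $u_{\Lambda_k}$ were in hand, Morrey's embedding $H^2\hookrightarrow C^{0,\alpha}$ for $n\leq3$ already gives the $L^\infty$ conclusion, so the concluding De~Giorgi--Moser iteration would be redundant; its appearance is another symptom that the earlier step is not actually delivering $H^2$.
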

Theorem \ref{teo-extremal-strong} fairly improves the result established by Cowan and Fazly for potential fields with $m = 2$ (see Theorem 2.1 of \cite{CowanFazly2014}) and is a novelty for $m \geq 3$.

The paper is organized into six sections. In Section 2 we investigate principal eigenvalues for strongly coupled nonlinear elliptic systems and obtain a key ingredient, namely, principal spectral hypersurfaces associated to such systems. Section 3 is dedicated to the proof of Theorem \ref{separation} which provides an extremal separating set with respect to existence of positive strong solutions. Section 4 is devoted to the proof of Theorem \ref{properties} which exhibits some qualitative properties to this set. In Section 5 we prove Theorem \ref{fraca} which ensures the existence of extremal solution on the separating set. In Section 6 we prove the stability of minimal positive strong solutions stated in Theorem \ref{stability}. Finally, in Section 7 we prove Theorem \ref{teo-extremal-strong} which guarantees the regularity of extremal solutions on convex domains in $\R^n$ for dimensions $n = 2, 3$.

\section{The principal spectral hypersurface}

Consider the nonlinear eigenvalue problem

\begin{gather}
\left\{\begin{array}{rlllr}\tag{2.1} \label{FracLE-eign}
-{\cal L} \varphi &=&\Lambda\rho(x)S_\alpha \varphi &\text{ in }& \Omega, \cr
\varphi&=&0 &\text{ on }& \partial \Omega,
\end{array}\right.
\end{gather}
where $\rho \in L^{n}(\Omega; \R^m)$ is a map satisfying $\rho(x) > 0$ for $x \in \Omega$ almost everywhere, $m \geq 1$ and $\alpha \in \R_+^m$ satisfies $\Pi\, \alpha = 1$.

The tuple $\Lambda\in\mathbb{R}^m$ is said to be an eigenvalue of \eqref{FracLE-eign} if the problem admits a nontrivial strong solution $\varphi \in W^{2,n}(\Omega; \R^m) \cap W^{1,n}_0(\Omega; \R^m)$ which by Sobolev embedding is in $C_0^1(\overline{\Omega}; \R^m)$. Here $C_0^1(\overline{\Omega}; \R^m)$ stands for the Banach space $\{u \in C^1(\overline{\Omega}; \R^m): u= 0\ {\rm on}\ \partial \Omega\}$ endowed with the norm

\[
\| u \|_{C^1(\overline{\Omega}; \R^m)} := \sum_{i=1}^m \| u_i \|_{C^1(\overline{\Omega})},
\]
where $C^1(\overline{\Omega}; \R^m)$ denotes the product space $C^1(\overline{\Omega}) \times \cdots \times C^1(\overline{\Omega})$. Furthermore, if all components of $\varphi$ are positive in $\Omega$, then $\Lambda$ is called a principal eigenvalue of \eqref{FracLE-eign}. We denote by $\Gamma_\alpha(\Omega, \rho)$ the set of all principal eigenvalues of the above problem.

The central point to be addressed in this section is the characterization of the set $\Gamma_\alpha(\Omega, \rho)$. In particular, we will show that this set makes up a smooth hypersurface in $\R^m_+$ which will be referred as principal hypersurface of $\R^m$, basically for two reasons. Firstly, the above problem is an extension of the Dirichlet eigenvalue problem for uniformly elliptic operators which corresponds to $m = 1$ and $\rho = 1$ in $\Omega$. So, in this case, the set $\Gamma_\alpha(\Omega, \rho)$ is single and represents the principal eigenvalue of ${\cal L}_1$. Secondly, when $m = 2$ it has been proved in \cite{Montenegro2000} that the set $\Gamma_\alpha(\Omega, \rho)$ is the image of a smooth curve satisfying some important analytical properties.

Strong maximum principle and Hopf's lemma for uniformly elliptic operators, here denoted shortly by {\bf (SMP)} and {\bf (HL)}, will play a key role in our proofs.

In the sequel, we will use a nonlinear version of Krein-Rutman Theorem (see for example Chang \cite{Chang2009} or Mahadevan \cite{Mahadevan2007}) to show that the set $\Gamma_\alpha(\Omega, \rho)$ can be seen as the inverse image of a regular value by a smooth function $H:\R^m_+\to\mathbb{R}$.

\begin{prop}\label{Lane-Emden}
Let $\rho \in L^n(\Omega; \R^m)$ be a positive map almost everywhere in $\Omega$ and $\alpha = (\alpha_1, \ldots, \alpha_m) \in \R_+^m$ be a tuple satisfying $\Pi\, \alpha = 1$ with $m \geq 1$. Then, there exist a smooth function $H: \R_+^m\to\mathbb{R}$ and a positive constant $\lambda_*$ such that

\[
\Gamma_\alpha(\Omega, \rho)= \{\Lambda \in \R_+^m :\, H(\Lambda)=\lambda_*\}\, .
\]
More specifically, the function $H$ is given by

\[
H(\Lambda)= H(\lambda_1, \ldots, \lambda_m) := \lambda_1 \lambda^{\alpha_1}_2 \ldots \lambda^{\alpha_1\ldots\alpha_{m-1}}_m\, .
\]
\end{prop}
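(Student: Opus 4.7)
My plan is to exploit a scaling symmetry of \eqref{FracLE-eign} to reduce the problem to finding a unique principal eigenvalue of a single \emph{scalar} nonlinear operator, and then to apply a nonlinear Krein-Rutman theorem to that operator.

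\textit{Scaling symmetry and normalization.} First, I would check directly that whenever $\varphi$ is a positive strong solution of \eqref{FracLE-eign} with eigenvalue $\Lambda$, then for any $c = (c_1, \ldots, c_m) \in \R^m_+$ the rescaling $\tilde\varphi_i := c_i \varphi_i$ is again such a solution, now with eigenvalue
\[
\tilde\lambda_i = \frac{c_i \lambda_i}{c_{i+1}^{\alpha_i}} \qquad (\text{indices cyclic mod } m).
\]
A telescoping computation using $\Pi\,\alpha = 1$ shows that $H$ is constant along any orbit of this action. Solving $\tilde\lambda_i = 1$ for $i = 2, \ldots, m$ is a triangular system (in log variables) for $c_3, \ldots, c_m, c_1$ in terms of a free parameter $c_2$, and substituting back into the formula for $\tilde\lambda_1$ gives $\tilde\lambda_1 = H(\Lambda)$. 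Hence every principal eigenvalue $\Lambda$ rescales to the normalized form $(H(\Lambda), 1, \ldots, 1)$, and conversely every principal solution of $(\lambda, 1, \ldots, 1)$ rescales to a principal solution of any $\Lambda$ on the level set $\{H = \lambda\}$. So the proposition reduces to showing there is exactly one $\lambda_* > 0$ for which $(\lambda_*, 1, \ldots, 1)$ admits a principal solution.

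\textit{Reduction to a $1$-homogeneous scalar operator.} Fix $\varphi_1 \in C_0^1(\overline\Omega)$ with $\varphi_1 > 0$ in $\Omega$. Successively solve the linear Dirichlet problems
\[
-{\cal L}_m \varphi_m = \rho_m \varphi_1^{\alpha_m}, \qquad -{\cal L}_i \varphi_i = \rho_i \varphi_{i+1}^{\alpha_i} \quad (i = m-1, m-2, \ldots, 2),
\]
with zero boundary data, and set $T\varphi_1 := w$ where $w \in W^{2,n}(\Omega) \cap W^{1,n}_0(\Omega)$ solves $-{\cal L}_1 w = \rho_1 \varphi_2^{\alpha_1}$. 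By \textbf{(SMP)} each solve is uniquely defined, and $(\lambda, 1, \ldots, 1)$ is a principal eigenvalue precisely when $\varphi_1 = \lambda T\varphi_1$ for some positive $\varphi_1$. Propagating the scaling $\varphi_1 \mapsto s\varphi_1$ through the chain and using $\Pi\,\alpha = 1$ yields $T(s\varphi_1) = s^{\Pi\,\alpha}\, T\varphi_1 = s\, T\varphi_1$, so $T$ is positively $1$-homogeneous.

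\textit{Krein-Rutman, and main obstacle.} I would then verify that $T$ is continuous and compact $C_0^1(\overline\Omega) \to C_0^1(\overline\Omega)$ using iterated Calder\'on-Zygmund estimates (combined with $\rho \in L^n$), and strongly monotone on the cone of positive functions with nontrivial Hopf boundary behavior by propagating \textbf{(SMP)} and \textbf{(HL)} through each intermediate problem. Applying a nonlinear Krein-Rutman theorem (e.g.\ Mahadevan \cite{Mahadevan2007} or Chang \cite{Chang2009}) then produces a unique $\lambda_* > 0$ and positive $\varphi_1^*$ with $\varphi_1^* = \lambda_* T\varphi_1^*$. Combined with the scaling step this identifies $\Gamma_\alpha(\Omega, \rho)$ with the level set $\{H = \lambda_*\}$; smoothness of $H$ is obvious from its explicit product-of-powers form. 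The central technical difficulty is verifying the hypotheses of the nonlinear Krein-Rutman theorem after the $(m-1)$-fold chaining---in particular strong monotonicity, which requires invoking the Hopf boundary lemma at each stage to upgrade a nontrivial inequality $\varphi_1 \leq \psi_1$ into a strict ordering of $T\varphi_1$ and $T\psi_1$ with the Hopf-type strict inequality of normal derivatives on $\partial\Omega$ needed to run the uniqueness half of Krein-Rutman.
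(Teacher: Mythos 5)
Your proposal is correct and follows essentially the same route as the paper: chain the coupled equations into a single positively $1$-homogeneous scalar operator $T = T_1 \circ \cdots \circ T_m$, verify compactness and strong monotonicity via iterated Calder\'on--Zygmund estimates together with \textbf{(SMP)} and \textbf{(HL)}, and invoke a nonlinear Krein--Rutman theorem to obtain a unique $\lambda_* > 0$. Your scaling-symmetry normalization of $\Lambda$ to $(H(\Lambda), 1, \ldots, 1)$ is a pleasant conceptual wrapper explaining why $H$ is the natural invariant, but it is algebraically the same computation the paper performs directly by substituting $\varphi_i = \lambda_i T_i \varphi_{i+1}$ and using the $\alpha_i$-homogeneity of each $T_i$ to read off $\varphi_1 = H(\Lambda)\, T\varphi_1$.
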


\begin{proof}

Consider the Banach spaces

\[
E_0 = C_0(\overline{\Omega}):= \{u \in C(\overline{\Omega}):\ u= 0\ {\rm on}\ \partial \Omega\},
\]

\[
E_1 = C_0^1(\overline{\Omega}):= \{u \in C^1(\overline{\Omega}):\ u= 0\ {\rm on}\ \partial \Omega\}
\]
endowed respectively with the usual norms $\| \cdot \|_{E_0} := \| \cdot \|_{C(\overline{\Omega})}$ and $\| \cdot \|_{E_1} := \| \cdot \|_{C^1(\overline{\Omega})}$.

For each $i=1, \ldots, m$, denote by $T_i: E_0 \rightarrow E_1$ the operator defined by $T_i u = v$, where $v \in W^{2,n}(\Omega) \cap W_0^{1,n}(\Omega) \subset E_1$ is the unique strong solution of the problem

\begin{gather}\tag{2.2}\label{EqOperador}
\left\{\begin{array}{rlllr}
-{\cal L}_i v &=& \rho_i(x) |u|^{\alpha_i - 1} u &\text{ in }& \Omega, \\
v&=&0 &\text{ on }& \partial \Omega. \end{array}\right.
\end{gather}
By the $L^n$ Calderón-Zygmund theory, each map $T_i$ is well-defined and continuous.

Let $J$ be the inclusion from $E_1$ into $E_0$ which is clearly compact. We now define the composition operator $T: E_0 \rightarrow E_0$ by $T:= T_1\circ\cdots\circ T_m \circ J$. Notice that $T$ is continuous and compact. Moreover, the existence and uniqueness of solution of \eqref{EqOperador} yield the $\alpha_i$-homogeneity property of $T_i$, that is,

\[
T_i(\tau u) = \tau^{\alpha_i} T_i(u)
\]
for all $\tau > 0$ and $u \in E_0$. So, by using that $\Pi\, \alpha = 1$, it follows that $T$ is positively $1$-homogeneous.

Consider now the positive cone of $E_j$ given by $K_j = \{u \in E_j:u\geq 0\ {\rm in}\ \Omega \}$ for $j=0,1$, so $E_j$ is an ordered Banach space with respect to $K_j$. We assert that $T$ is strongly monotone with respect to $K_0$. In fact, the positivity of $\rho_i$ and {\bf (SMP)} imply that each operator $T_i$ is monotone, so that $T$ is monotone too. Moreover, it follows that $T_i u_2 - T_i u_1 \in K_0 \setminus \{0\}$ whenever $u_2 - u_1 \in K_0 \setminus \{0\}$. Consequently, $\tilde{u}_2 - \tilde{u}_1 \in K_0 \setminus \{0\}$ provided that $u_2 - u_1 \in K_0 \setminus \{0\}$, where $\tilde{u} = T_2\circ\cdots\circ T_m (u)$. Thus, {\bf (HL)} leads to $T_1 \tilde{u}_2 - T_1 \tilde{u}_1 \in \mathring{K}_1$, where $\mathring{K}_1$ denotes the interior of $K_1$. In other words, $T u_2 - T u_1 \in \mathring{K}_1 \subset \mathring{K}_0$ and the claim follows. Therefore, the positively $1$-homogeneous operator $T$ fulfills all assumptions required by the nonlinear Krein-Rutman theorem. More specifically, by its fairly complete version provided in Theorem 1.4 of \cite{Chang2009}, there exists a number $\lambda_* > 0$ such that $\lambda^{-1}_*$ is the unique principal eigenvalue of $T$ and is the largest among all real eigenvalues of $T$. Moreover, all eigenfunctions of $T$ associated to the eigenvalue $\lambda^{-1}_*$ are multiple of a fixed eigenfunction $\varphi_* \in \mathring{K}_1$.

We are ready to conclude the proof.

Let $\Lambda = (\lambda_1, \ldots, \lambda_m) \in \Gamma_\alpha(\Omega, \rho) \subset \R_+^m$ and $\varphi = (\varphi_1, \ldots, \varphi_m) \in C_0^1(\overline{\Omega}; \R^m)$ be a positive eigenfunction associated to $\Lambda$. Then, from each equation of the problem \eqref{FracLE-eign}, we deduce that $\varphi_i = \lambda_i T_i \varphi_{i+1}$ for $i=1, \ldots, m-1$ and $\varphi_m = \lambda_m T_m \varphi_1$. Arguing step to step with replacement, we derive

\[
\varphi_1 = \lambda_1 \lambda^{\alpha_1}_2 \cdots \lambda^{\alpha_1\ldots\alpha_{m-1}}_m T_1\circ\cdots\circ T_m(\varphi_1) = H(\Lambda) T \varphi_1\, .
\]
Since $\varphi_1 \in K_0 \setminus \{0\}$, the above equality implies that $H(\Lambda)^{-1}$ is a principal eigenvalue of $T$, so that $H(\Lambda) = \lambda_*$ by uniqueness.

Conversely, let $\Lambda = (\lambda_1, \ldots, \lambda_m) \in \R_+^m$ be such that $H(\Lambda)=\lambda_*$. Choose an eigenfunction $\varphi_* \in K_0$ of $T$ associated to the principal eigenvalue $\lambda^{-1}_*$. Define by recurrence the functions $\varphi_m = \lambda_m T_m \varphi_*$ and $\varphi_i = \lambda_{i} T_{i} \varphi_{i+1}$ for $i=1, \ldots, m-1$. Then, $\varphi = (\varphi_1, \ldots, \varphi_m) \in C_0^1(\overline{\Omega}; \R^m)$ and, by {\bf (SMP)}, the map $\varphi$ is positive in $\Omega$. On the other hand, we also have

\[
\varphi_1 = \lambda_1 \lambda^{\alpha_1}_2 \ldots \lambda^{\alpha_1\ldots\alpha_{m-1}}_m T_1\circ\cdots\circ T_m(\varphi_*) = H(\Lambda) T \varphi_*\, .
\]
Finally, using that $H(\Lambda)=\lambda_*$ and $\varphi_*$ is an eigenfunction of $T$ corresponding to $\lambda^{-1}_*$, the above equality yields $\varphi_1 = \varphi_*$ and this proves that $\Lambda$ is a principal eigenvalue of \eqref{EqOperador}. In other words, we conclude that $\Lambda \in \Gamma_\alpha(\Omega, \rho)$.
\end{proof}

The principal $(m-1)$-hypersurface $\{\Lambda \in \R_+^m :\, H(\Lambda)=\lambda_*\}$ will be naturally represented by the notation $\Lambda_1$. Notice that $\Lambda_1$ decomposes $\R^m_+$ into two unbounded connected components.

An immediate consequence of Proposition \ref{Lane-Emden} is

\begin{corol}\label{Autovalor}
For each vector $\sigma=(\sigma_1,\ldots,\sigma_{m-1})\in\R^{m-1}_+$, there exists a unique number $\theta_*=\theta_*(\sigma)>0$ such that

\[
(\theta_*,\theta_*\sigma_1,\ldots,\theta_*\sigma_{m-1}) \in \Lambda_1\, .
\]
Moreover, we have

\[
\theta_*(\sigma)=\left(\displaystyle{\frac{\displaystyle{\lambda_*}}{\displaystyle{\prod_{i=1}^{m-1}\sigma_i^{\prod_{k=1}^i\alpha_k}}}}\right)^{\displaystyle{\frac{1}{\displaystyle{\sum_{i=1}^{m}\prod_{k=1}^i\alpha_k}}}}.
\]
\end{corol}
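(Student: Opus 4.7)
The plan is a direct substitution and a bookkeeping of exponents. By Proposition \ref{Lane-Emden}, the principal hypersurface $\Lambda_1$ is the level set $\{H = \lambda_*\}$, where $H(\lambda_1,\ldots,\lambda_m) = \lambda_1\lambda_2^{\alpha_1}\cdots\lambda_m^{\alpha_1\cdots\alpha_{m-1}}$. So the task reduces to showing that the function $\theta \mapsto H(\theta,\theta\sigma_1,\ldots,\theta\sigma_{m-1})$ takes the value $\lambda_*$ at exactly one positive point $\theta_*$, and then to read off the formula.

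First I would plug $(\theta,\theta\sigma_1,\ldots,\theta\sigma_{m-1})$ into $H$ and separate the factors in $\theta$ and the factors in $\sigma_i$. Writing $p_i := \prod_{k=1}^i \alpha_k$ (with $p_0 := 1$), the coefficient $\sigma_i$ appears in the $(i+1)$-th slot, whose exponent inside $H$ is $p_i$. Therefore
\[
H(\theta,\theta\sigma_1,\ldots,\theta\sigma_{m-1}) \;=\; \theta^{\,p_0 + p_1 + \cdots + p_{m-1}} \cdot \prod_{i=1}^{m-1} \sigma_i^{\,p_i}.
\]
Since $\Pi\,\alpha = 1$, we have $p_m = 1 = p_0$, so the exponent of $\theta$ can equivalently be written as $\sum_{i=1}^{m} p_i = \sum_{i=1}^{m}\prod_{k=1}^i \alpha_k$, matching the denominator that appears in the stated formula.

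Next, since the exponent $E := \sum_{i=1}^m \prod_{k=1}^i \alpha_k$ is strictly positive (all $\alpha_k>0$) and the $\sigma_i$-factor is a fixed positive number, the map
\[
\theta \longmapsto \theta^{E}\,\prod_{i=1}^{m-1} \sigma_i^{\,p_i}
\]
is strictly increasing from $0$ to $+\infty$ on $(0,\infty)$, and hence attains the value $\lambda_*>0$ at exactly one point $\theta_*=\theta_*(\sigma)$. Solving $\theta_*^{\,E}\prod_{i=1}^{m-1}\sigma_i^{p_i} = \lambda_*$ algebraically for $\theta_*$ yields precisely the displayed formula.

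No genuine obstacle is expected: the whole argument is an exponent count followed by inversion of a one-variable power function. The only point demanding a moment of care is checking the two ways of writing the $\theta$-exponent (as $1+\sum_{i=1}^{m-1}p_i$ or as $\sum_{i=1}^{m}p_i$), which uses the normalization $\Pi\,\alpha = 1$; this is exactly the reason why that normalization was imposed in the set-up of the principal hypersurface.
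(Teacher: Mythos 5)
Your proposal is correct and is exactly the computation the paper intends: the corollary is stated as an immediate consequence of Proposition \ref{Lane-Emden}, and the intended argument is precisely to evaluate $H$ along the ray $\theta\mapsto(\theta,\theta\sigma)$, observe that $\Pi\,\alpha=1$ lets the $\theta$-exponent be written as $\sum_{i=1}^m\prod_{k=1}^i\alpha_k$, and invert the resulting monotone power of $\theta$. Nothing is missing.
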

In most of this work, we will work with $\Lambda_1$ by means of the parametrization $\sigma \in \R_+^{m-1} \mapsto (\theta_*(\sigma), \theta_*(\sigma) \sigma)$ described in this corollary.\\

\section{The construction of the extremal set $\Lambda^*$}
Throughout paper it will be assumed at least that $F(x, \cdot)$ is continuous for $x \in \Omega$ almost everywhere, $F(\cdot, t) \in L^n(\Omega; \R^m)$ for every $t \in \R_+^m$ and $F$ satisfies (A), (B) and (C).

This section is devoted to the study about existence of a $(m-1)$-hypersurface $\Lambda^*$ decomposing $\mathbb{R}^m_+$ into two sets related to existence and nonexistence of positive strong solution of \eqref{1}. The eigenvalue problem \eqref{FracLE-eign} to $m$-parameters considered in the previous section will play a key role in the construction of $\Lambda^*$.

We start by defining the set

\[
\mathcal{A}_0 = \{\Lambda\in\mathbb{R}_+^m:\; \eqref{1}\text{ admits a positive strong solution }u\in W^{2,n}(\Omega; \R^m) \cap W_0^{1,n}(\Omega; \R^m)\}.
\]

The following lemma makes use of (A) and (B) and implies that $\mathcal{A}_0$ is non-empty.

\begin{lem}\label{existence_lema1}
The inclusion $(B_{\varepsilon_0}(0)\cap \mathbb{R}^m_+)\subset \mathcal{A}_0$ holds for $\varepsilon_0 > 0$ sufficiently small.
\end{lem}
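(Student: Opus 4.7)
The plan is to exhibit a positive supersolution for $\Lambda$ close to the origin and to use the sub-/supersolution method with the trivial subsolution $0$. Define, for each $i = 1, \ldots, m$, the strong solution $w_i \in W^{2,n}(\Omega) \cap W_0^{1,n}(\Omega)$ of
\[
\left\{\begin{array}{rlllr}
-{\cal L}_i w_i &=& f_i(x, t_0) & {\rm in} & \Omega, \\
w_i &=& 0 & {\rm on} & \partial \Omega,
\end{array}\right.
\]
where $t_0 \in \R^m_+$ is a fixed tuple (e.g.\ $t_0 = (1, \ldots, 1)$). Since $f_i(\cdot, t_0) \in L^n(\Omega)$, the $L^n$ Calder\'on--Zygmund theory together with the Aleksandrov--Bakelman--Pucci estimate gives $w_i \in L^\infty(\Omega)$ with $\|w_i\|_{L^\infty(\Omega)} \leq C_i \|f_i(\cdot, t_0)\|_{L^n(\Omega)}$ for some constant $C_i > 0$ depending only on $\Omega$ and the coefficients of ${\cal L}_i$. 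Moreover, by {\bf (SMP)}, $w_i > 0$ in $\Omega$.

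Now set $\overline{u}_i := \lambda_i w_i$ and $\overline{u} := (\overline{u}_1, \ldots, \overline{u}_m)$. Choose $\varepsilon_0 > 0$ so small that $\varepsilon_0 C_i \|f_i(\cdot, t_0)\|_{L^n(\Omega)} \leq (t_0)_i$ for every $i$. Then for any $\Lambda \in B_{\varepsilon_0}(0) \cap \R^m_+$ we have $0 < \overline{u} \leq t_0$ in $\Omega$, so by monotonicity (B),
\[
-{\cal L}_i \overline{u}_i \;=\; \lambda_i f_i(x, t_0) \;\geq\; \lambda_i f_i(x, \overline{u}),
\]
which shows that $\overline{u}$ is a positive strong supersolution of \eqref{1}. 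The zero map is trivially a strong subsolution since $-{\cal L}(0) = 0 \leq \Lambda F(x, 0)$ by (A).

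Applying the standard monotone iteration scheme (starting from $u^{(0)} = 0$ and solving, at each step, the $m$ decoupled linear problems $-{\cal L}_i u^{(k+1)}_i = \lambda_i f_i(x, u^{(k)})$), one obtains a nondecreasing sequence $0 \leq u^{(k)} \leq \overline{u}$. The monotonicity of the iteration follows from (B) and from the maximum principle satisfied by each ${\cal L}_i$; the uniform bound $u^{(k)} \leq \overline{u}$ is preserved inductively using (B) and the fact that $\overline{u}$ is a supersolution. Passing to the pointwise limit $u$ and invoking $L^n$ interior estimates together with the continuity of $F(x,\cdot)$ yields $u \in W^{2,n}(\Omega; \R^m) \cap W_0^{1,n}(\Omega; \R^m)$ solving \eqref{1} in the strong sense. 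Finally, positivity of $u$ in $\Omega$ follows from {\bf (SMP)}: for each $i$, (A) and (B) give $-{\cal L}_i u_i = \lambda_i f_i(x,u) \geq \lambda_i f_i(x,0) > 0$ a.e., and $u_i \geq 0$, so $u_i > 0$ in $\Omega$.

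The main technical point is the construction of $\overline{u}$ together with the $L^\infty$ bound that forces $\overline{u} \leq t_0$ for $\Lambda$ small, since this is what allows (B) to convert the linear equation satisfied by $\overline{u}$ into a supersolution inequality for the nonlinear system. The super-/subsolution iteration itself is routine once this inequality is in place.
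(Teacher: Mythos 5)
Your proposal is correct and takes essentially the same approach as the paper: fix $t_0 \in \R^m_+$, use the $L^n$ Calder\'on--Zygmund (ABP-type) bound on $(-{\cal L})^{-1}F(\cdot,t_0)$ to pick $\varepsilon_0$ small enough that the iteration stays below $t_0$, and run the monotone iteration from $0$ using (A), (B) and the maximum principle. The only cosmetic difference is that you package the bound as an explicit supersolution $\overline{u} = \Lambda w$, whereas the paper verifies directly by induction that the iterates $u_k$ remain bounded by the constant $t_0$; the underlying estimates and convergence argument are the same.
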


\begin{proof}
A basic fact to be used in the proof is that $F(\cdot, u) \in L^n(\Omega; \R^m)$ whenever $u \in L^\infty(\Omega; \R^m)$. Let $u_k$ be defined recursively by $u_1=0$ and $u_{k+1}=\Lambda(-\mathcal{L} )^{-1}( F(x,u_k))$ for $k \geq 1$. Thanks to the $L^n$ Calderón-Zygmund theory, $u_k$ is well defined and belongs to $ W^{2,n}(\Omega; \R^m) \cap W_0^{1,n}(\Omega; \R^m)$ for every $k \geq 1$.

Fix $t_0\in\mathbb{R}^m_+$ and choose $\varepsilon_0>0$ such that $\varepsilon_0(-\mathcal{L} )^{-1}( F(\cdot,t_0))\leq t_0$ in $\Omega$. For $\Lambda\in B_{\varepsilon_0}(0)\cap\mathbb{R}^m_+$, the condition (B) gives

\[
u_2(x) = \Lambda(-\mathcal{L} )^{-1}( F(x,0)) \leq \varepsilon_0(-\mathcal{L} )^{-1}( F(x,t_0))\leq t_0\ {\rm for}\ x \in \Omega.
\]
In addition, (A) and {\bf (SMP)} imply that $u_2 > 0 = u_1$ in $\Omega$. Using again (B) and {\bf (SMP)}, we derive

\[
u_3(x) = \Lambda(-\mathcal{L} )^{-1}( F(x,u_2)) \leq \varepsilon_0(-\mathcal{L} )^{-1}( F(x,t_0))\leq t_0\ {\rm for}\ x \in \Omega
\]
and

\[
u_3(x) = \Lambda(-\mathcal{L} )^{-1}( F(x,u_2)) \geq  \Lambda(-\mathcal{L} )^{-1}( F(x,u_1)) = u_2(x)\ {\rm for}\ x \in \Omega.
\]
In resume, we have $u_1 \leq u_2 \leq u_3 \leq t_0$ in $\Omega$. Proceeding inductively with the aid of (B) and {\bf (SMP)}, we conclude that the sequence $(u_k)$ is pointwise nondecreasing and uniformly bounded above by $t_0$. Thus, $u_k$ converges pointwise and in $L^q(\Omega; \R^m)$ to $u$ for any $q \geq 1$. Again invoking the Calderón-Zygmund elliptic theory, it follows that $u_k$ converges to $u$ in $W^{2,n}(\Omega;\mathbb{R}^m)$ and so $\Lambda \in \mathcal{A}_0$.
\end{proof}

The idea behind the construction of $\Lambda^*$ lies in considering points on each half-line in $\R^m_+$ starting from the origin at the direction $(1,\sigma)$, that is, $\mathcal{T}_\sigma=\{\lambda>0:\;(\lambda,\lambda\sigma)\in \mathcal{A}_0\}$, where $\sigma \in \R^{m-1}_+$. By Lemma \ref{existence_lema1}, the set $\mathcal{T}_\sigma$ is nonempty for every $\sigma\in\mathbb{R}^{m-1}_+$.

We assert that the segment $\mathcal{T}_\sigma$ is bounded for each fixed $\sigma\in\mathbb{R}^{ m-1}_+$. The next result will be useful in the proof of this claim and will demand the three conditions (A), (B) and (C).

\begin{lem}\label{super}
There exist a positive map $\rho_0 \in L^n(\Omega; \R^m)$, a tuple $\alpha \in \R^m_+$ with $\Pi \alpha = 1$ and a constant $C_0 > 0$ such that, for $x \in \Omega$ almost everywhere and $t \in \R^m_+$,

\[
C_0 F(x,t) \geq \rho_0(x) S_\alpha(t).
\]
\end{lem}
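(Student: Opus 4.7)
The plan is to combine the "asymptotic lower bound" provided by hypothesis (C) with the "positivity at the origin" coming from (A) and (B) to manufacture a single inequality valid on all of $\R^m_+$. Condition (C) already furnishes exactly the form of inequality we want, but only for $|t|>M$; the genuine work is therefore to absorb the small-$t$ regime into the same bound by shrinking the weight $\rho$.

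First, I would apply (C) with the specific choice $\kappa=1$. This produces a positive map $\rho\in L^n(\Omega;\R^m)$, a tuple $\alpha\in\R^m_+$ with $\Pi\alpha=1$, and a threshold $M>0$ such that $F(x,t)\geq \rho(x)S_\alpha(t)$ for a.e.\ $x\in\Omega$ and all $t\in\R^m_+$ with $|t|>M$. On the complementary region $\{|t|\leq M\}$, each coordinate of $S_\alpha(t)$ is uniformly bounded: the $i$-th coordinate equals $t_{i+1}^{\alpha_i}$ (indices cyclically), which is at most $M^{\alpha_i}$ since $t_{i+1}\in[0,M]$ and $\alpha_i>0$. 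Simultaneously, (B) gives $F_i(x,t)\geq F_i(x,0)$, and (A) tells us $F_i(x,0)>0$ a.e.

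Next, I would define the truncated weight coordinatewise by
\[
\rho_{0,i}(x)=\min\!\left\{\rho_i(x),\ \frac{F_i(x,0)}{M^{\alpha_i}}\right\},\qquad i=1,\ldots,m.
\]
Since both $\rho_i$ and $F_i(\cdot,0)$ belong to $L^n(\Omega)$ and are positive a.e., the same is true of each $\rho_{0,i}$, so $\rho_0\in L^n(\Omega;\R^m)$ and $\rho_0>0$ a.e.

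The verification splits naturally. For $|t|>M$, we have $\rho_{0,i}\leq\rho_i$ componentwise, hence $\rho_0(x)S_\alpha(t)\leq \rho(x)S_\alpha(t)\leq F(x,t)$. For $|t|\leq M$, the bound $(S_\alpha(t))_i\leq M^{\alpha_i}$ together with $\rho_{0,i}\leq F_i(x,0)/M^{\alpha_i}$ gives $\rho_{0,i}(x)(S_\alpha(t))_i\leq F_i(x,0)\leq F_i(x,t)$, the last inequality by monotonicity (B). Thus $F(x,t)\geq \rho_0(x)S_\alpha(t)$ on all of $\R^m_+$, and one may take $C_0=1$. There is no real obstacle here: the only delicate point is keeping track of the cyclic indexing in $S_\alpha$, but the $L^n$-membership and the a.e.-positivity of the truncated weight both follow immediately from the corresponding properties of $\rho$ and of $F(\cdot,0)$.
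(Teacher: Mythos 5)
Your proof is correct and follows essentially the same route as the paper's: apply (C) with $\kappa=1$, split into $|t|>M$ and $|t|\leq M$, bound $S_\alpha$ on the compact region, use (A) and (B) to compare against $F(x,0)$, and take a coordinatewise minimum to define $\rho_0$. The only (cosmetic) difference is that you fold the compactness constant into the weight $\rho_0$ so that $C_0=1$, whereas the paper keeps $\rho_0=\min\{\rho,F(\cdot,0)\}$ and carries a separate constant $C_0=\max\{1,D_0\}$.
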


\begin{proof}
Applying (C) for $\kappa = 1$, we get a positive map $\rho \in L^n(\Omega; \R^m)$, a tuple $\alpha \in \R^m_+$ with $\Pi\, \alpha = 1$ and a constant $M > 0$ such that

\[
F(x,t) \geq \rho(x) S_\alpha(t)
\]
for $x \in \Omega$ almost everywhere and $t \in \R^m_+$ with $|t| > M$. On the other hand, by continuity and compactness, there exists a constant $D_0 > 0$ so that

\[
S_\alpha(t) \leq D_0 (1, \ldots, 1)
\]
for all $t \in \R^m_+$ with $|t| \leq M$. Combining this conclusion with the assumptions (A) and (B), we derive

\[
D_0 F(x,t) \geq D_0 F(x,0) \geq F(x,0) S_\alpha(t)
\]
for $x \in \Omega$ almost everywhere and $t \in \R^m_+$ with $|t| \leq M$. Thus, the proof follows by choosing the positive constant $C_0 = \max\{1, D_0\}$ and the positive map $\rho_0(x) = \min\{\rho(x), F(x,0)\}$.

\end{proof}

The main result of this section is as follows:

\begin{lem}\label{existence_lema2}
The set $\mathcal{T}_{\sigma}$ is bounded for every $\sigma \in \R_+^{m-1}$.
\end{lem}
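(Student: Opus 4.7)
The plan is to argue by contradiction, reducing to a Lane--Emden type supersolution via Lemma \ref{super} and then exploiting the principal eigenvalue structure of Proposition \ref{Lane-Emden}. Specifically, I would assume $\mathcal{T}_\sigma$ contains arbitrarily large $\lambda$, let $u = (u_1, \ldots, u_m)$ be the positive strong solution of \eqref{1} corresponding to $\Lambda = (\lambda, \lambda\sigma) \in \mathcal{A}_0$, and invoke Lemma \ref{super} to obtain
\[
-\mathcal{L}_i u_i = \lambda_i f_i(x, u) \geq \frac{\lambda_i}{C_0}\, (\rho_0)_i(x)\, u_{i+1}^{\alpha_i}, \quad i = 1, \ldots, m,
\]
with the cyclic convention $u_{m+1} := u_1$. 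Thus $u$ is a positive strong supersolution of the Lane--Emden type system associated with the weight $\rho_0$, the exponent tuple $\alpha$ (which satisfies $\Pi\,\alpha = 1$), and the parameter $\Theta = \Lambda/C_0$.

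Next, I would apply Proposition \ref{Lane-Emden} with weight $\rho_0$ to bring in the monotone, compact, $\alpha_i$-homogeneous solution operators $T_i$, their composition $T = T_1 \circ \cdots \circ T_m$, the principal Krein--Rutman eigenvalue $\lambda_* > 0$ and the positive eigenfunction $\varphi_* \in \mathring{K}_1$ of $T$. The supersolution inequalities above read $u_i \geq \theta_i T_i(u_{i+1})$ with $\theta_i = \lambda_i / C_0$. Iterating from $i=1$ down to $i=m$ and repeatedly using the monotonicity and $\alpha_i$-homogeneity of $T_i$, I would chain the inequalities into
\[
u_1 \geq H(\Theta)\, T(u_1), \quad H(\Theta) = \theta_1\, \theta_2^{\alpha_1}\, \theta_3^{\alpha_1\alpha_2} \cdots \theta_m^{\alpha_1\alpha_2\cdots\alpha_{m-1}}.
\]

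Finally, by SMP and Hopf's lemma applied componentwise, both $u_1$ and $\varphi_*$ are strictly positive in $\Omega$ and comparable to the distance function near $\partial\Omega$, so $c^* := \sup\{c > 0 : u_1 \geq c\varphi_*\}$ lies in $(0,\infty)$. Applying $T$ to $u_1 \geq c^*\varphi_*$ and using its monotonicity and $1$-homogeneity together with $T\varphi_* = \lambda_*^{-1}\varphi_*$, I would derive
\[
u_1 \geq H(\Theta)\, T(u_1) \geq H(\Theta)\, T(c^*\varphi_*) = \frac{H(\Theta)}{\lambda_*}\, c^*\varphi_*,
\]
which contradicts the maximality of $c^*$ as soon as $H(\Theta) > \lambda_*$. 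Hence $H(\Theta) \leq \lambda_*$. Writing $\Theta = C_0^{-1}(\lambda, \lambda\sigma)$ explicitly yields $H(\Theta) = \lambda^{N}\, g(\sigma)$ with $N = 1 + \alpha_1 + \alpha_1\alpha_2 + \cdots + \alpha_1\cdots\alpha_{m-1} > 0$ and $g(\sigma) > 0$ depending only on $\sigma$, $\alpha$ and $C_0$; this forces $\lambda \leq (\lambda_*/g(\sigma))^{1/N}$, contradicting the assumed unboundedness of $\mathcal{T}_\sigma$. The hard part will be verifying $c^* \in (0,\infty)$ via the boundary Hopf estimates and, above all, correctly chaining the componentwise supersolution inequalities into the single functional inequality $u_1 \geq H(\Theta)\,T(u_1)$ through the precise homogeneity bookkeeping; once these are in place, the Krein--Rutman comparison closes the argument.
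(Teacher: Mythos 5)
Your proposal is correct and arrives at the paper's bound $\lambda\leq C_0\theta_*(\sigma)$, but your implementation of the Krein--Rutman comparison is organized differently from the paper's, so it is worth noting the two routes. The paper works at the vector level: after invoking Lemma~\ref{super}, it compares $u=(u_1,\ldots,u_m)$ directly with the scaled vector eigenfunction $s^{\hat\alpha}\varphi_0$ from Corollary~\ref{Autovalor}, where the exponents $\hat\alpha_i=\prod_{k=i}^m\alpha_k$ are chosen so that $S_\alpha(s^{\hat\alpha}\varphi_0)=s^{\hat\alpha}S_\alpha(\varphi_0)$; then $s^*=\sup\{s>0: u>s^{\hat\alpha}\varphi_0\}$ is finite by {\bf(SMP)}/{\bf(HL)}, and one gets a strict differential inequality $-\mathcal{L}(u-(s^*)^{\hat\alpha}\varphi_0)>0$ which {\bf(SMP)}/{\bf(HL)} upgrade to $u>(s^*+\varepsilon)^{\hat\alpha}\varphi_0$, a contradiction. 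You instead reduce to a scalar comparison: you chain the componentwise supersolution relations $u_i\geq\theta_i T_i(u_{i+1})$ through the monotone, $\alpha_i$-homogeneous solution operators $T_i$ to get $u_1\geq H(\Theta)\,T(u_1)$ (the homogeneity bookkeeping you outline is exactly right: each $T_1\circ\cdots\circ T_j$ is $\alpha_1\cdots\alpha_j$-homogeneous), and then compare $u_1$ with the scalar Krein--Rutman eigenfunction $\varphi_*\in\mathring K_1$ of $T$. That comparison closes cleanly since $u_1,\varphi_*\in\mathring K_1$ makes $c^*=\sup\{c>0:u_1\geq c\varphi_*\}$ finite and positive (this is the standard Hopf-cone argument, not really a hard step), and $u_1\geq \tfrac{H(\Theta)}{\lambda_*}c^*\varphi_*$ contradicts maximality of $c^*$ as soon as $H(\Theta)>\lambda_*$. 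The two proofs are morally the same Krein--Rutman comparison; yours avoids the $\hat\alpha$-scaling of the vector eigenfunction by pushing the homogeneity arithmetic into the chaining step and avoids the ``increase $s^*$ by $\varepsilon$'' maneuver since the $1$-homogeneity of $T$ gives the contradiction directly from the multiplicative constant. Both routes yield $H(\Lambda/C_0)\leq\lambda_*$, i.e.\ $\lambda\leq C_0\theta_*(\sigma)$, which is the bound the paper records.
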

\begin{proof}
Let fixed $\sigma \in \R_+^{m-1}$ and $\Lambda=(\lambda,\lambda\sigma) \in \mathcal{A}_0$. By the definition of $\mathcal{A}_0$, \eqref{1} admits a positive strong solution $u$. Invoking Lemma \ref{super}, we get

\[
-\mathcal{L}u\geq \frac{1}{C_0} \Lambda\rho_0(x)S_\alpha(u),
\]
where $\rho_0 \in L^n(\Omega; \R^m)$ is a positive map and $\alpha \in \R^m_+$ is a tuple such that $\Pi\, \alpha = 1$. Applying Corollary \ref{Autovalor} to \eqref{EqOperador} with weight $\rho_0$, we obtain a principal eigenvalue $\Lambda_0 = (\theta_*,\theta_*\sigma) \in \R^m_+$ and a corresponding positive eigenfunction $\varphi_0$. We claim that $\Lambda \leq C_0 \Lambda_0$. Set $\hat{\alpha}_i=\Pi_{k=i}^m\alpha_k$ and denote $\hat{\alpha}=(\hat{\alpha}_1,\ldots,\hat{\alpha}_m)$ and $s^{\hat{\alpha}}=(s^{\hat{\alpha}_1},\ldots,s^{\hat{\alpha}_m})$. We now consider the set $\mathcal{S}=\{s:\; u > s^{\hat{\alpha}}\varphi_0 \text{ in } \Omega\}$, which is nonempty by {\bf (SMP)} and {\bf (HL)} and also bounded above. Let $s^*=\sup\mathcal{S} > 0$. Assuming by contradiction that $\Lambda > C_0 \Lambda_0$ ($\sigma$ is fixed) and applying the  above inequality and $u \geq (s^*)^{\hat{\alpha}} \varphi_0$ in $\Omega$, we derive

\begin{eqnarray*}
-\mathcal{L}(u-(s^*)^{\hat{\alpha}}\varphi_0) &\geq& \frac{1}{C_0} \Lambda\rho_0(x)S_\alpha(u) - (s^*)^{\hat{\alpha}} \Lambda_0 \rho_0(x) S_\alpha(\varphi_0) \\
&\geq& \left(\frac{1}{C_0} \Lambda - \Lambda_0\right) \rho_0(x)(s^*)^{\hat{\alpha}}S_\alpha(\varphi_0) > 0\ \ {\rm in}\ \Omega.
\end{eqnarray*}
Here it was used that $S_\alpha((s^*)^{\hat{\alpha}}\varphi_0) = (s^*)^{\hat{\alpha}} S_\alpha(\varphi_0)$. Finally, {\bf(SMP)} and {\bf(HL)} applied to the above inequality yield the contradiction $u > (s^*+\varepsilon)^{\hat{\alpha}}\varphi_0$ in $\Omega$ for $\varepsilon>0$ small enough. Therefore, we deduce that $\Lambda \leq C_0 \Lambda_0$, so that $\lambda \leq C_0 \theta_*$.
\end{proof}

Thanks to Lemmas \ref{existence_lema1} and \ref{existence_lema2}, for any $\sigma \in \R^{m-1}_+$, the number
\[
\lambda^*(\sigma)=\sup\mathcal{T}_\sigma
\]
is well defined and positive. Introduce the sets

\begin{eqnarray*}
&& \mathcal{A} = \{(\lambda,\lambda \sigma):\ 0 < \lambda < \lambda^*(\sigma),\ \sigma \in \R^{m-1}_+\},\\
&& \mathcal{B} = \{(\lambda,\lambda \sigma):\ \lambda > \lambda^*(\sigma),\ \sigma \in \R^{m-1}_+\},\\
&& \Lambda^* = \{(\lambda^*(\sigma),\lambda^*(\sigma) \sigma):\ \sigma \in \R^{m-1}_+\}.
\end{eqnarray*}
The set $\Lambda^*$ is called extremal set associated to the problem \eqref{1}.

The next result shows partially (i) of Theorem \ref{separation}.

\begin{lem}\label{existence_lema3} The set $\mathcal{A}_0$ contains $\mathcal{A}$. In particular, \eqref{1} admits a minimal positive strong solution $u_\Lambda$ for any $\Lambda \in {\cal A}$.
\end{lem}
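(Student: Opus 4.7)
The plan is to combine a super-subsolution comparison with the monotone iteration already used in the proof of Lemma \ref{existence_lema1}. Let me fix $\Lambda = (\lambda, \lambda \sigma) \in \mathcal{A}$, so by definition $\lambda < \lambda^*(\sigma) = \sup \mathcal{T}_\sigma$. By the definition of supremum, I would select $\tilde{\lambda} \in \mathcal{T}_\sigma$ with $\lambda < \tilde{\lambda} < \lambda^*(\sigma)$, so that $\tilde{\Lambda} := (\tilde{\lambda}, \tilde{\lambda} \sigma) \in \mathcal{A}_0$. Then \eqref{1} with parameter $\tilde{\Lambda}$ admits a positive strong solution $\tilde{u} \in W^{2,n}(\Omega; \R^m) \cap W_0^{1,n}(\Omega; \R^m)$. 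Since $\Lambda < \tilde{\Lambda}$ coordinatewise and $F(x, \tilde{u}) \geq F(x, 0) > 0$ almost everywhere by (A) and (B), we have
\[
-\mathcal{L}\tilde{u} = \tilde{\Lambda} F(x, \tilde{u}) \geq \Lambda F(x, \tilde{u}) \ \text{in}\ \Omega,
\]
so $\tilde{u}$ is a positive strong supersolution of \eqref{1} at parameter $\Lambda$.

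Next I would construct $u_\Lambda$ by monotone iteration exactly as in Lemma \ref{existence_lema1}: set $u_1 = 0$ and $u_{k+1} = \Lambda(-\mathcal{L})^{-1}(F(\cdot, u_k))$. The Calderón–Zygmund $L^n$ theory makes each $u_k$ well defined in $W^{2,n}(\Omega; \R^m) \cap W_0^{1,n}(\Omega; \R^m)$. Using (A), (B) and {\bf (SMP)}, I would show by induction that $0 = u_1 \leq u_2 \leq \cdots \leq u_k \leq \tilde{u}$ in $\Omega$ for every $k$. The upper bound is the new ingredient relative to Lemma \ref{existence_lema1}: assuming $u_k \leq \tilde{u}$, (B) yields $F(\cdot, u_k) \leq F(\cdot, \tilde{u})$, and then
\[
u_{k+1} = \Lambda(-\mathcal{L})^{-1}(F(\cdot, u_k)) \leq \Lambda(-\mathcal{L})^{-1}(F(\cdot, \tilde{u})) \leq \tilde{\Lambda}(-\mathcal{L})^{-1}(F(\cdot, \tilde{u})) = \tilde{u},
\]
where the last inequality uses that $(-\mathcal{L})^{-1}$ preserves nonnegativity by {\bf (SMP)}. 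The monotonicity $u_{k} \leq u_{k+1}$ follows by the same type of induction from $u_1 \leq u_2$ (which is (A)) combined with (B).

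Since $(u_k)$ is pointwise nondecreasing and uniformly bounded by $\tilde{u} \in L^\infty(\Omega; \R^m)$, it converges pointwise, hence in $L^q(\Omega; \R^m)$ for every $q \geq 1$ by dominated convergence, to some nonnegative map $u_\Lambda$. Invoking $L^n$ Calderón–Zygmund estimates for the difference $u_{k+1} - u_{k+j+1}$ exactly as in Lemma \ref{existence_lema1}, I would upgrade this to convergence in $W^{2,n}(\Omega; \R^m)$ and conclude that $u_\Lambda$ is a strong solution of \eqref{1} at parameter $\Lambda$. Positivity of $u_\Lambda$ in $\Omega$ follows from (A) together with {\bf (SMP)} applied componentwise, which shows $\Lambda \in \mathcal{A}_0$.

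For minimality, suppose $v$ is any nonnegative strong solution of \eqref{1} at parameter $\Lambda$. Then $u_1 = 0 \leq v$, and assuming $u_k \leq v$, (B) and $v = \Lambda(-\mathcal{L})^{-1}(F(\cdot, v))$ give $u_{k+1} \leq v$. Passing to the limit yields $u_\Lambda \leq v$. I do not see a genuine obstacle here: the only delicate point is the choice of $\tilde{\lambda} \in (\lambda, \lambda^*(\sigma))$ providing a strict supersolution, and the uniform boundedness of the iteration that this supersolution supplies; the rest is a routine reprise of the argument in Lemma \ref{existence_lema1}.
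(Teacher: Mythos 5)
Your proof is correct and takes essentially the same route as the paper: pick a larger parameter on the same ray that lies in $\mathcal{A}_0$ (the paper phrases this as showing $(0,\lambda_0)\subset\mathcal{T}_\sigma$ whenever $(\lambda_0,\lambda_0\sigma)\in\mathcal{A}_0$, you phrase it via the definition of supremum), use the associated solution as a supersolution, and rerun the monotone iteration of Lemma \ref{existence_lema1} bounded above by it. The only cosmetic slip is claiming $\tilde\lambda<\lambda^*(\sigma)$ can always be arranged; the supremum only guarantees $\lambda<\tilde\lambda\le\lambda^*(\sigma)$, which is all the argument actually uses.
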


\begin{proof}
Let a fixed $\sigma\in\mathbb{R}^{m-1}_+$. Given $\Lambda_0 = (\lambda_0,\lambda_0\sigma) \in \mathcal{A}_0$, it suffices to deduce that $(0, \lambda_0) \subset \mathcal{T}_\sigma$. Take $0 < \lambda < \lambda_0$ and set $\Lambda = (\lambda,\lambda\sigma)$. Let $u_{\Lambda_0} $ be a positive strong solution of \eqref{1} corresponding to $\Lambda_0$. Since $\Lambda \leq \Lambda_0$, we have

\[
-\mathcal{L}u_{\Lambda_0} = \Lambda_0 F(x,u_{\Lambda_0}) \geq \Lambda F(x,u_{\Lambda_0}) \text{ in }\Omega.
\]
In other words, the map $u_{\Lambda_0}$ is a supersolution of the problem \eqref{1}. Let $\{u_k\}_{k\geq1}$ be the sequence of maps constructed in the proof of Lemma \ref{existence_lema1}.  Proceeding in a similar manner to that proof, one easily concludes that $\{u_k\}_{k\geq1}$ is a pointwise nondecreasing sequence that satisfies $u_k \leq u_{\Lambda_0}$ in $\Omega$ for all $k \geq 1$. Then, the same argument applies as in Lemma \ref{existence_lema1} and so we deduce that the problem \eqref{1} has a positive strong solution. Therefore, $\Lambda \in \mathcal{A}_0$, so that $\lambda \in \mathcal{T}_\sigma$. This finishes the proof.
\end{proof}

We now ensure monotonicity and continuous of $u_\Lambda$ with respect to $\Lambda$.

\begin{lem}\label{monotonic-continuity}
	The map $\Lambda \in \mathcal{A} \mapsto u_\Lambda$ is nondecreasing and continuous with respect to the topology of $W^{2,n}(\Omega; \R^m) \cap W_0^{1,n}(\Omega; \R^m)$.
\end{lem}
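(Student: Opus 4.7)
\begin{dem}
My plan is to handle monotonicity via a standard supersolution-iteration argument, then derive uniform $W^{2,n}$ bounds on $(u_{\Lambda_k})$ and pass to the limit by compactness, closing with a sandwich identification.

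\textbf{Monotonicity.} For $\Lambda_1 \leq \Lambda_2$ in $\mathcal{A}$, conditions (A), (B), and $u_{\Lambda_2} \geq 0$ give $-\mathcal{L}u_{\Lambda_2} = \Lambda_2 F(x,u_{\Lambda_2}) \geq \Lambda_1 F(x,u_{\Lambda_2})$, so $u_{\Lambda_2}$ is a positive supersolution of the $\Lambda_1$-problem. Running the Picard iteration $u_1 = 0$, $u_{k+1} = \Lambda_1(-\mathcal{L})^{-1}F(\cdot,u_k)$ from Lemma \ref{existence_lema1}, an induction based on (B) and \textbf{(SMP)} bounds the iterates pointwise by $u_{\Lambda_2}$; the monotone limit, which by Lemma \ref{existence_lema1} coincides with the minimal positive solution $u_{\Lambda_1}$, therefore satisfies $u_{\Lambda_1} \leq u_{\Lambda_2}$.

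\textbf{Uniform bounds and compactness.} Fix $\Lambda = (\lambda,\lambda\sigma) \in \mathcal{A}$ and $\Lambda_k \to \Lambda$. I would choose $\lambda' \in (\lambda,\lambda^*(\sigma))$ so that $\Lambda' := (\lambda',\lambda'\sigma) \in \mathcal{A}$ dominates $\Lambda$ componentwise; for $k$ large, $\Lambda_k \leq \Lambda'$, and monotonicity gives $u_{\Lambda_k} \leq u_{\Lambda'} \in L^\infty(\Omega;\R^m)$. By (B), the envelope $F(\cdot,u_{\Lambda'}) \in L^n(\Omega;\R^m)$ dominates $F(\cdot,u_{\Lambda_k})$, so the $L^n$ Calder\'on-Zygmund theory yields a uniform $W^{2,n}$ bound on $(u_{\Lambda_k})$. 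By the compact embedding $W^{2,n}(\Omega;\R^m) \hookrightarrow C^0(\overline{\Omega};\R^m)$, an arbitrary subsequence admits a sub-subsequence converging uniformly to some $v$; dominated convergence then promotes $\Lambda_k F(\cdot,u_{\Lambda_k}) \to \Lambda F(\cdot,v)$ in $L^n$ and, via $L^n$ theory applied to differences, upgrades the convergence to strong convergence in $W^{2,n}$. The limit $v$ is a positive strong solution of the $\Lambda$-problem, with positivity from (A) and \textbf{(SMP)}, and minimality of $u_\Lambda$ forces $v \geq u_\Lambda$.

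\textbf{Identification and the main obstacle.} To upgrade to $v \leq u_\Lambda$, I would sandwich $\Lambda_k$ between $\underline{\Lambda}_k := \min(\Lambda_k,\Lambda)$ and $\overline{\Lambda}_k := \max(\Lambda_k,\Lambda)$, both componentwise and both converging to $\Lambda$, obtaining $u_{\underline{\Lambda}_k} \leq u_{\Lambda_k} \leq u_{\overline{\Lambda}_k}$ by monotonicity. The lower sandwich is clean: since $\underline{\Lambda}_k \leq \Lambda$, any subsequential limit of $u_{\underline{\Lambda}_k}$ is a positive solution bounded above by $u_\Lambda$, hence equal to $u_\Lambda$ by minimality, yielding $\liminf_k u_{\Lambda_k} \geq u_\Lambda$. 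The hard part is the matching bound $\limsup_k u_{\Lambda_k} \leq u_\Lambda$, i.e.\ right-continuity of the minimal branch at $\Lambda$. My plan there is to iterate the supersolution $u_{\overline{\Lambda}_k}$ downward under $T_\Lambda(w) := \Lambda(-\mathcal{L})^{-1}F(\cdot,w)$: since $\overline{\Lambda}_k \geq \Lambda$, the iterates $T_\Lambda^j(u_{\overline{\Lambda}_k})$ are nonincreasing in $j$ and dominate $u_\Lambda$; combining continuity of $T_\bullet$ in its first argument with $T_\Lambda^j(0) \uparrow u_\Lambda$ through a diagonal extraction in $(j,k)$ should squeeze $u_{\overline{\Lambda}_k}$ down onto $u_\Lambda$. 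Once $v = u_\Lambda$ is established, every subsequence of $(u_{\Lambda_k})$ admits a sub-subsequence converging to the same limit, so the full sequence converges $u_{\Lambda_k} \to u_\Lambda$ strongly in $W^{2,n}(\Omega;\R^m) \cap W_0^{1,n}(\Omega;\R^m)$.
\end{dem}
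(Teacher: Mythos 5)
Your overall skeleton mirrors the paper's: monotonicity via a supersolution iteration, then Calder\'on--Zygmund bounds and compactness to extract a subsequential limit, then an identification step via minimality. One genuine difference is that the paper restricts to the ray $\Lambda=(\lambda,\lambda\sigma)$ with $\sigma$ fixed and lets only $\lambda$ vary, while you admit arbitrary sequences $\Lambda_k\to\Lambda$ in $\mathcal{A}$; the componentwise sandwich by $\underline{\Lambda}_k=\min(\Lambda_k,\Lambda)$ and $\overline{\Lambda}_k=\max(\Lambda_k,\Lambda)$ is a sensible reduction, although you should say why these tuples land in $\mathcal{A}$ for large $k$ (openness of $\mathcal{A}$ at $\Lambda$ rests on the continuity of $\lambda^*$, which is proved afterwards in Proposition~\ref{continuity-components}). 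The lower sandwich is clean and is the same use of minimality the paper makes when it observes that the limit from $\Lambda<\Lambda_0$ is bounded above by $u_{\Lambda_0}$.

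The genuine gap is the upper sandwich, and your proposed repair does not close it. The iterates $T_\Lambda^j(u_{\overline{\Lambda}_k})$ are indeed nonincreasing in $j$ and squeezed between $u_\Lambda$ and $u_{\overline{\Lambda}_k}$, but as $j\to\infty$ they converge to a fixed point of $T_\Lambda$, and a decreasing iteration started from a supersolution selects the \emph{maximal} fixed point below it, not the minimal one; nothing forces that limit to equal $u_\Lambda$. Diagonalizing in $(j,k)$ cannot help: if $u_{\overline{\Lambda}_k}\to v$ in $W^{2,n}$, then $v$ solves the $\Lambda$-problem, so $T_\Lambda^j(v)=v$ for all $j$ and every diagonal of $T_\Lambda^j(u_{\overline{\Lambda}_k})$ converges to $v$, not to $u_\Lambda$; the upward iteration $T_\Lambda^j(0)\uparrow u_\Lambda$ that you invoke only re-establishes the lower bound you already have. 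So right-continuity of the minimal branch, i.e.\ $\limsup_k u_{\Lambda_k}\le u_\Lambda$ when $\Lambda_k\ge\Lambda$, remains open in your write-up. It is worth noting that the paper's own proof disposes of this very point with the clause ``this leads readily to the reverse inequality,'' which as written addresses only the limit from below; a complete treatment of the approach from above would need additional input, for instance the stability $\eta_1\ge 0$ of the minimal branch together with an upper barrier argument, rather than a monotone iteration started from a supersolution.
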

\begin{proof}
	We first show that the map $\Lambda \in \mathcal{A} \mapsto u_\Lambda$ is nondecreasing. Choose any $\underline \Lambda, \overline \Lambda \in \mathcal{A}$ such that $\underline \Lambda \leq \overline \Lambda$. Notice that $u_{\overline \Lambda}$ is a positive strong supersolution of
	\begin{equation*}
	\left\{
	\begin{array}{rrll}
	-\mathcal{L} u &=& \underline \Lambda F(x, u) & {\rm in} \ \ \Omega,\\
	u&=&0 & {\rm on} \ \  \partial\Omega.
	\end{array}
	\right.
	\end{equation*}
Proceeding as in the proof of Lemma \ref{existence_lema3}, we readily get $u_{\underline \lambda} \leq u_{\overline \Lambda}$ in $\Omega$. Moreover, if $\underline \Lambda < \overline \Lambda$, then by (A) and (B) we have

\[
-\mathcal{L} u_{\underline \lambda} = \underline \Lambda F(x, u_{\underline \lambda}) \leq \underline \Lambda F(x, u_{\overline \Lambda}) < \overline \Lambda F(x, u_{\overline \Lambda}) = -\mathcal{L} u_{\overline \lambda}\ {\rm in}\ \Omega,
\]
so that $u_{\underline \lambda} < u_{\overline \lambda}$ in $\Omega$, by {\bf(SMP)}.
	
	We now prove that the map $\Lambda \in \mathcal{A} \mapsto u_\Lambda \in W^{2,n}(\Omega; \R^m) \cap W_0^{1,n}(\Omega; \R^m)$ is continuous at $\Lambda_0 \in \mathcal{A}$. Write $\Lambda_0 = (\lambda_0, \lambda_0 \sigma)$ with $\lambda_0 > 0$ and $\sigma \in \R_+^{m-1}$. Choose a fixed number $\overline \lambda$ such that $0 < \lambda_0 < \overline \lambda < \lambda^*(\sigma)$. For $0 < \lambda < \overline \lambda $, set $\Lambda = (\lambda, \lambda \sigma)$ and $\overline \Lambda = (\overline \lambda, \overline \lambda \sigma)$. Note that $\Lambda < \overline \Lambda$ for all $\Lambda$ close enough to $\Lambda_0$, so that $0 < u_\Lambda < u_{\overline \Lambda}$ in $\Omega$ by strict monotonicity. Since $F(x, u_{\overline \Lambda}) \in L^n(\Omega; \R^m)$, it follows from the $L^n$ Calderón-Zygmund theory that $u_\Lambda$ is uniformly bounded in $W^{2,n}(\Omega; \R^m)$ for $\Lambda$ around of $\Lambda_0$. But this implies that exists a positive strong solution $u_0 \in W^{2,n}(\Omega; \R^m) \cap W_0^{1,n}(\Omega; \R^m)$ of

\begin{equation*}
	\left\{
	\begin{array}{rrll}
	-\mathcal{L} u &=& \Lambda_0 F(x, u) & {\rm in} \ \ \Omega,\\
	u&=&0 & {\rm on} \ \  \partial\Omega.
	\end{array}
	\right.
	\end{equation*}
as the limit of $u_\Lambda$ as $\Lambda$ tends to $\Lambda_0$. In particular, $u_0 \geq u_{\Lambda_0}$ in $\Omega$.

On the other hand, for $\Lambda < \Lambda_0$ we have $u_\Lambda < u_{\Lambda_0}$ in $\Omega$. This leads readily to the reverse inequality $u_0 \leq u_{\Lambda_0}$ in $\Omega$ and so we end the proof.
\end{proof}

\begin{lem}\label{nonexistence}
The problem \eqref{1} admits no nonnegative strong solution for any $\Lambda \in \mathcal{B}$.
\end{lem}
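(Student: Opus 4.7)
The plan is to proceed by contradiction, exploiting the maximality $\lambda^*(\sigma) = \sup \mathcal{T}_\sigma$. Given $\Lambda \in \mathcal{B}$, I would write $\Lambda = (\lambda, \lambda\sigma)$ with $\sigma \in \R^{m-1}_+$ and $\lambda > \lambda^*(\sigma)$, and suppose towards a contradiction that \eqref{1} admits a nonnegative strong solution $u = (u_1, \ldots, u_m) \in W^{2,n}(\Omega; \R^m) \cap W_0^{1,n}(\Omega; \R^m)$. The aim is to show that $\Lambda$ must in fact lie in $\mathcal{A}_0$, which would force $\lambda \in \mathcal{T}_\sigma$ and hence $\lambda \leq \lambda^*(\sigma)$, the desired contradiction.

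The only substantive step is to upgrade nonnegativity of $u$ to strict positivity of each component, since $\mathcal{A}_0$ is defined via positive strong solutions. By hypothesis (A) one has $f_i(x,0) > 0$ for a.e.\ $x \in \Omega$ and every $i$; combining this with hypothesis (B) and $u \geq 0$ gives, for each $i = 1, \ldots, m$,
\[
-\mathcal{L}_i u_i \;=\; \lambda_i\, f_i(x, u) \;\geq\; \lambda_i\, f_i(x, 0) \;>\; 0 \quad \text{a.e. in } \Omega,
\]
with $\lambda_i > 0$. Since $u_i \geq 0$ in $\Omega$ and the right-hand side is strictly positive on a set of positive measure, \textbf{(SMP)} (which each $\mathcal{L}_i$ satisfies by assumption) yields $u_i > 0$ in $\Omega$. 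Therefore $u$ is a positive strong solution of \eqref{1} with parameter $\Lambda$, so $\Lambda \in \mathcal{A}_0$ directly from the definition of this set.

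I do not expect any real obstacle here: the positivity upgrade from (A), (B) and \textbf{(SMP)} is a standard maximum principle argument, and the contradiction with $\lambda > \lambda^*(\sigma) = \sup \mathcal{T}_\sigma$ is then immediate. In particular, unlike Lemma \ref{existence_lema3}, no super-subsolution iteration is required, because the hypothetical $u$ is already a solution at the parameter $\Lambda$ itself rather than a supersolution for some smaller parameter; the full strength of (C), which was needed to bound $\mathcal{T}_\sigma$ and define $\lambda^*(\sigma)$ in the first place, is only used implicitly through the existence of $\lambda^*(\sigma)$.
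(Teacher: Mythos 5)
Your proof is correct and follows essentially the same route as the paper: contradiction via the definition of $\lambda^*(\sigma)$ as $\sup \mathcal{T}_\sigma$, with the nonnegative solution upgraded to a positive one so that $\Lambda \in \mathcal{A}_0$. The only difference is that you spell out the positivity step carefully using (A), (B) and \textbf{(SMP)}, whereas the paper simply asserts the solution is "clearly positive by (A) and (B)" and leaves the maximum-principle argument implicit.
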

\begin{proof}
As usual, the proof is carried out by contradiction. Assume that such a solution $u_0$ of \eqref{1} exists for some $\Lambda_0 \in \mathcal{B}$, which is clearly positive by (A) and (B). By definition of $\mathcal{B}$, we can write $\Lambda_0 = (\lambda_0, \lambda_0 \sigma)$ for some $\lambda_0 > \lambda^*(\sigma) > 0$ with $\sigma \in \R^{m-1}_+$. But the latter inequality contradicts the definition of $\lambda^*(\sigma)$.
\end{proof}

\section{Qualitative properties of $\Lambda^*$}

Consider the map $\Phi:\mathbb{R}^{m-1}_+\to\Lambda^*\subset\mathbb{R}^m_+$ defined by

\[
\Phi(\sigma)=(\lambda^*(\sigma),\nu^*(\sigma)):=(\lambda^*(\sigma),\lambda^*(\sigma)\sigma)
\]
which parameterizes the set $\Lambda^*$, where $\lambda^*(\sigma)$ was defined in the previous section.

This section is devoted to qualitative properties satisfied by $\Lambda^*$ by means of the map $\Phi$. In particular, we present the proof of Theorem \ref{properties} organized inti four propositions.

We first attach the part (I) of Theorem \ref{properties}.

\begin{prop}\label{continuity-components} The hypersurface $\Lambda^*$ is continuous.
\end{prop}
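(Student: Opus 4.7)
Since $\Phi(\sigma) = (\lambda^*(\sigma), \lambda^*(\sigma)\sigma)$, continuity of $\Phi$ on $\R^{m-1}_+$ is equivalent to continuity of the scalar map $\sigma \mapsto \lambda^*(\sigma)$. The plan is to establish lower and upper semicontinuity separately; in each case the argument reduces to producing a positive strong supersolution at the target tuple and then invoking the monotone iteration scheme already developed in Lemmas \ref{existence_lema1} and \ref{existence_lema3}. Throughout, fix $\sigma_0 \in \R^{m-1}_+$ and a sequence $\sigma_n \to \sigma_0$, keeping in mind that every coordinate of $\sigma_0$ is strictly positive.

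For lower semicontinuity, I would fix $\lambda < \lambda^*(\sigma_0)$, choose an intermediate $\overline{\lambda}$ with $\lambda < \overline{\lambda} < \lambda^*(\sigma_0)$, and take the positive strong solution $u_{\overline{\Lambda}}$ of \eqref{1} at $\overline{\Lambda} := (\overline{\lambda}, \overline{\lambda}\sigma_0) \in \mathcal{A}$ supplied by Lemma \ref{existence_lema3}. The strict gap $\overline{\lambda} > \lambda$, combined with $\sigma_n \to \sigma_0$, gives $\lambda (\sigma_n)_i < \overline{\lambda}(\sigma_0)_i$ for every $i$ and all $n$ sufficiently large, i.e.\ $\Lambda_n := (\lambda, \lambda \sigma_n) < \overline{\Lambda}$ componentwise. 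Since $F \geq 0$ on $\Omega \times \R_+^m$ by (A) and (B), we get $-\mathcal{L} u_{\overline{\Lambda}} = \overline{\Lambda} F(x, u_{\overline{\Lambda}}) \geq \Lambda_n F(x, u_{\overline{\Lambda}})$, so $u_{\overline{\Lambda}}$ is a positive strong supersolution at $\Lambda_n$. The monotone iteration of Lemma \ref{existence_lema1} (started at $0$, bounded above by $u_{\overline{\Lambda}}$) then produces a positive strong solution at $\Lambda_n$, so $\Lambda_n \in \mathcal{A}_0$ and $\lambda \leq \lambda^*(\sigma_n)$ for all large $n$. Letting $\lambda \uparrow \lambda^*(\sigma_0)$ yields $\liminf_n \lambda^*(\sigma_n) \geq \lambda^*(\sigma_0)$.

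For upper semicontinuity I would argue by contradiction. Suppose that along a subsequence $\lambda^*(\sigma_n) \to L > \lambda^*(\sigma_0)$, and choose scalars $\lambda^*(\sigma_0) < \lambda < \overline{\lambda} < L$. For $n$ large we have $\overline{\lambda} < \lambda^*(\sigma_n)$, so Lemma \ref{existence_lema3} yields a positive strong solution $u_n$ of \eqref{1} at $\overline{\Lambda}_n := (\overline{\lambda}, \overline{\lambda} \sigma_n) \in \mathcal{A}$. Because $\overline{\lambda} > \lambda$ and each $(\sigma_0)_i > 0$, we have $\overline{\lambda}(\sigma_n)_i \to \overline{\lambda}(\sigma_0)_i > \lambda(\sigma_0)_i$, so $\overline{\Lambda}_n > \Lambda_0 := (\lambda, \lambda \sigma_0)$ componentwise for all $n$ large enough. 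Again using $F(x, u_n) \geq 0$, one obtains $-\mathcal{L} u_n = \overline{\Lambda}_n F(x, u_n) \geq \Lambda_0 F(x, u_n)$, exhibiting $u_n$ as a positive strong supersolution of \eqref{1} at $\Lambda_0$. The super-subsolution iteration between the subsolution $0$ and $u_n$ therefore provides a positive strong solution at $\Lambda_0$, which forces $\Lambda_0 \in \mathcal{A}_0$ and hence $\lambda \leq \lambda^*(\sigma_0)$, contradicting $\lambda > \lambda^*(\sigma_0)$. Thus $\limsup_n \lambda^*(\sigma_n) \leq \lambda^*(\sigma_0)$, completing the proof.

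The main delicate point is arranging the componentwise inequalities $\Lambda_n < \overline{\Lambda}$ (lower semicontinuity) and $\overline{\Lambda}_n > \Lambda_0$ (upper semicontinuity) for $n$ large. This is what forces the introduction of an intermediate parameter $\overline{\lambda}$ strictly separated from $\lambda$: the strict scalar gap cannot be destroyed by the perturbation $\sigma_n \to \sigma_0$ in any of the $m-1$ remaining coordinates, precisely because $\sigma_0 \in \R^{m-1}_+$ has all entries strictly positive. Once that geometric observation is in place, the argument is a direct reuse of the super-subsolution machinery already assembled in Section 3.
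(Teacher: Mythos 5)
Your proof is correct and follows essentially the same strategy as the paper: introduce an intermediate scalar parameter to secure a strict componentwise gap between tuples, observe that a positive strong solution at the larger tuple is a supersolution at the smaller one (via nonnegativity of $F$ from (A) and (B)), and invoke the monotone iteration of Lemmas \ref{existence_lema1}--\ref{existence_lema3}. The only difference is organizational: you establish lower and upper semicontinuity explicitly and separately, while the paper argues by contradiction, writes out one case, and declares the other ``analogous.''
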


\begin{proof}
For the continuity of $\Lambda^*$, it suffices to show the continuity of $\Phi(\sigma)$ or, equivalently, $\lambda^*(\sigma)$. Assume by contradiction that $\lambda^*(\sigma)$ is discontinuous at some $\sigma \in \mathbb{R}^{m-1}_+$. Then, there exist a number $\varepsilon > 0$ and a sequence $\sigma_k$ converging to $\sigma$ such that, for any $k \geq 1$,
\[
|\lambda^*(\sigma_k)-\lambda^*(\sigma)| \geq \varepsilon.
\]
Module a subsequence, we can assume that

\[
\lambda^*(\sigma_k) \leq \lambda^*(\sigma) - \varepsilon
\]
or

\[
\lambda^*(\sigma_k) \geq \lambda^*(\sigma) + \varepsilon
\]
for $k$ sufficiently large. For our purposes, it is enough to consider only the first case, once the second one is carried out in an analogous way.

Take positive numbers $\underline{\lambda}$ and $\overline{\lambda}$ so that

\[
\lambda^*(\sigma_k)<\underline{\lambda}<\overline{\lambda}<\lambda^*(\sigma).
\]
Then,

\[
\lambda^*(\sigma_k)\sigma_k<\underline{\lambda}\sigma_k<\overline{\lambda}\sigma<\lambda^*(\sigma)\sigma
\]
for $k$ large enough.

Let $\underline{\Lambda}_k=(\underline{\lambda},\underline{\lambda}\sigma_k)$ and $\overline{\Lambda}=(\overline{\lambda},\overline{\lambda}\sigma)$. From the above inequality and Lemma \ref{existence_lema3}, it follows that \eqref{1} has a positive strong solution $\overline{u}$ corresponding to $\overline{\Lambda}$. Moreover, noting that $\overline{\Lambda} \geq \underline{\Lambda}_k$, we get

\[
\left\{\begin{array}{rlllr}
-\mathcal{L}\overline{u} &\geq& \underline{\Lambda}_kF(x,\overline{u}) &\text{ in }& \Omega,\cr
                     \overline{u}&=&0 &\text{ on }& \partial\Omega,\end{array}\right.
\]
which implies that $\overline{u}$ is a positive supersolution of \eqref{1} for $\Lambda = \underline{\Lambda}_k$. Hence, \eqref{1} possesses a positive strong solution for $\Lambda = \underline{\Lambda}_k$, so that $\underline{\lambda} \leq \lambda^*(\sigma_k)$ for $k$ large enough, contradicting the above reverse inequality. This concludes the proof.
\end{proof}

For a better understanding of the hypersurface $\Lambda^*$, we need to study the behavior of the components of $\Phi$. The next proposition proves the part (II) of Theorem \ref{properties}.
\begin{prop}\label{behavior-components-step} The function $\lambda^*(\sigma)$ is nonincreasing on $\sigma$.
\end{prop}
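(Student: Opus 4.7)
The plan is to exploit the super-subsolution machinery from Lemmas \ref{existence_lema1} and \ref{existence_lema3}. Given $0 < \sigma_1 \leq \sigma_2$ in $\R^{m-1}_+$, I will show that any $\lambda \in \mathcal{T}_{\sigma_2}$ also lies in $\mathcal{T}_{\sigma_1}$; taking the supremum on both sides then yields $\lambda^*(\sigma_2) \leq \lambda^*(\sigma_1)$, which is the desired monotonicity.

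To carry this out, fix $\lambda < \lambda^*(\sigma_2)$ and set $\Lambda_2 = (\lambda, \lambda\sigma_2)$ and $\Lambda_1 = (\lambda, \lambda\sigma_1)$, so that $\Lambda_1 \leq \Lambda_2$ componentwise. By Lemma \ref{existence_lema3}, the system \eqref{1} with parameter $\Lambda_2$ admits a positive strong solution $u_{\Lambda_2} \in W^{2,n}(\Omega;\R^m) \cap W_0^{1,n}(\Omega;\R^m)$. Since $u_{\Lambda_2} \geq 0$, conditions (A) and (B) give $F(\cdot, u_{\Lambda_2}) \geq F(\cdot, 0) > 0$ a.e. in $\Omega$, so that
\[
-\mathcal{L} u_{\Lambda_2} \;=\; \Lambda_2 F(x, u_{\Lambda_2}) \;\geq\; \Lambda_1 F(x, u_{\Lambda_2}) \quad \text{in } \Omega.
\]
Thus $u_{\Lambda_2}$ is a positive strong supersolution of \eqref{1} for the parameter $\Lambda_1$, while the trivial map $0$ is an obvious subsolution.

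Running the monotone iteration $u_1 = 0$, $u_{k+1} = \Lambda_1(-\mathcal{L})^{-1}(F(\cdot, u_k))$ from the proof of Lemma \ref{existence_lema1}, and using (B), {\bf(SMP)} and the supersolution $u_{\Lambda_2}$ as pointwise barrier, an easy induction yields $0 = u_1 \leq u_2 \leq \cdots \leq u_{\Lambda_2}$ in $\Omega$. The $L^n$ Calder\'on-Zygmund estimates then allow passage to the limit and produce a positive strong solution of \eqref{1} with parameter $\Lambda_1$. Hence $\Lambda_1 \in \mathcal{A}_0$ and $\lambda \in \mathcal{T}_{\sigma_1}$; letting $\lambda \uparrow \lambda^*(\sigma_2)$ finishes the argument. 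No genuine obstacle arises here: the result is essentially a repackaging of the comparison principle together with the already-established super-subsolution scheme, the only point to keep in mind being the componentwise nonnegativity of $F(\cdot, u_{\Lambda_2})$, which is immediate from (A) and (B).
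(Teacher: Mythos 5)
Your proof is correct and uses essentially the same mechanism as the paper: comparing the parameters componentwise and invoking the super-subsolution iteration from Lemmas \ref{existence_lema1} and \ref{existence_lema3}, with the solution at the larger parameter serving as a supersolution for the smaller one. The only cosmetic difference is that you argue directly ($\mathcal{T}_{\sigma_2}\subset\mathcal{T}_{\sigma_1}$, then take suprema) whereas the paper phrases the same comparison as a contradiction by squeezing in auxiliary values $\underline{\lambda}<\overline{\lambda}$ between $\lambda^*(\sigma_1)$ and $\lambda^*(\sigma_2)$; your version is, if anything, a bit tidier.
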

\begin{proof} Assume by contradiction that the  claim is false, that is, $\lambda^*(\sigma_1) < \lambda^*(\sigma_2)$ for some $\sigma_1 < \sigma_2$ in $\mathbb{R}^{m-1}_+$. Choose positive numbers $\underline{\lambda}$ and $\overline{\lambda}$  such that

\[
\lambda^*(\sigma_1) < \underline{\lambda} < \overline{\lambda}<\lambda^*(\sigma_2),
\]
then

\[
\lambda^*(\sigma_1)\sigma_1 < \underline{\lambda}\sigma_1 < \overline{\lambda}\sigma_2 < \lambda^*(\sigma_2)\sigma_2.
\]
Denoting $\underline{\Lambda} = (\underline{\lambda}, \underline{\lambda}\sigma_1)$ and $\overline{\Lambda} = (\overline{\lambda}, \overline{\lambda}\sigma_2)$, we have $\underline{\Lambda} < \overline{\Lambda}$. Then, applying exactly the same argument of the previous proposition to the these inequalities, we readily arrive at the contradiction $\underline{\lambda} \leq \lambda^*(\sigma_1)$. This ends the proof.
\end{proof}

We now prove the part (III) of Theorem \ref{properties}.

\begin{prop}
For any $0 < \sigma_1 \leq \sigma_2$, there exists $i\in\{1,\ldots,m\}$ such that $\nu^*_i(\sigma_1)\leq\nu^*_i(\sigma_2)$.
\end{prop}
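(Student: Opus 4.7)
The plan is to argue by contradiction and recycle the super-/subsolution mechanism from Propositions \ref{continuity-components} and \ref{behavior-components-step}. Suppose that $\nu^*_i(\sigma_1) > \nu^*_i(\sigma_2)$ for every $i \in \{1,\ldots,m-1\}$. Unpacking $\nu^* = \lambda^*\sigma$, this reads $\lambda^*(\sigma_1)(\sigma_1)_i > \lambda^*(\sigma_2)(\sigma_2)_i$, equivalently
\[
\frac{\lambda^*(\sigma_2)}{\lambda^*(\sigma_1)} \,<\, \frac{(\sigma_1)_i}{(\sigma_2)_i}\qquad \text{for every } i.
\]
Set $\delta := \min_{1 \leq i \leq m-1} (\sigma_1)_i/(\sigma_2)_i$. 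The assumption $\sigma_1 \leq \sigma_2$ forces $\delta \leq 1$, while Proposition \ref{behavior-components-step} gives $\lambda^*(\sigma_2) \leq \lambda^*(\sigma_1)$; one may moreover assume $\sigma_1 \neq \sigma_2$ (otherwise the conclusion is trivial), so $\delta < 1$, and the displayed inequality above yields $\lambda^*(\sigma_2)/\lambda^*(\sigma_1) < \delta$.

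Next I would pick positive numbers $\underline{\lambda}$ and $\overline{\lambda}$ with
\[
\lambda^*(\sigma_2) \,<\, \underline{\lambda} \,\leq\, \delta\,\overline{\lambda} \,<\, \overline{\lambda} \,<\, \lambda^*(\sigma_1),
\]
a choice possible precisely because $\lambda^*(\sigma_2)/\lambda^*(\sigma_1) < \delta$. Put $\underline{\Lambda} = (\underline{\lambda},\, \underline{\lambda}\sigma_2)$ and $\overline{\Lambda} = (\overline{\lambda},\, \overline{\lambda}\sigma_1)$. Since $\delta\sigma_2 \leq \sigma_1$ componentwise (by definition of $\delta$) and $\underline{\lambda} \leq \delta\overline{\lambda}$, one gets $\underline{\lambda}\sigma_2 \leq \overline{\lambda}\sigma_1$, and combined with $\underline{\lambda} \leq \overline{\lambda}$ this yields $\underline{\Lambda} \leq \overline{\Lambda}$. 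From $\overline{\lambda} < \lambda^*(\sigma_1)$ and Lemma \ref{existence_lema3} one obtains a positive strong solution $\overline{u}$ of \eqref{1} at $\overline{\Lambda}$, which via (B) and $\underline{\Lambda} \leq \overline{\Lambda}$ becomes a supersolution at $\underline{\Lambda}$; the monotone iteration of Lemma \ref{existence_lema1} then delivers a positive strong solution at $\underline{\Lambda}$, i.e.\ $\underline{\Lambda} \in \mathcal{A}_0$. But $\underline{\lambda} > \lambda^*(\sigma_2)$ forces $\underline{\Lambda} \in \mathcal{B}$, contradicting Lemma \ref{nonexistence}.

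The specialization to $m = 2$ drops out for free, as the minimum defining $\delta$ runs over a single index and the conclusion $\nu^*(\sigma_1) \leq \nu^*(\sigma_2)$ is simply monotonicity of $\nu^*$. The delicate point in the plan is the nonemptiness of the window $(\lambda^*(\sigma_2)/\lambda^*(\sigma_1),\,\delta)$ from which the ratio $\underline{\lambda}/\overline{\lambda}$ is drawn; this is exactly where the ``for every $i$'' form of the contradiction hypothesis is crucial, and it clarifies why one cannot hope for full componentwise monotonicity of $\nu^*$ when $m \geq 3$, since negating the conclusion on only one coordinate would give distinct lower bounds for the distinct ratios and no common $\delta$ would be available.
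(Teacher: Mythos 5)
Your proof is correct and follows essentially the same route as the paper: negate the conclusion, derive $\lambda^*(\sigma_2)\sigma_2 < \lambda^*(\sigma_1)\sigma_1$, choose $\underline{\lambda} < \overline{\lambda}$ so that $\underline{\Lambda} = (\underline{\lambda},\underline{\lambda}\sigma_2)$ is dominated by $\overline{\Lambda} = (\overline{\lambda},\overline{\lambda}\sigma_1) \in \mathcal{A}$, and invoke the supersolution argument of Proposition~\ref{continuity-components} to contradict $\underline{\lambda} > \lambda^*(\sigma_2)$. The only difference is that you make explicit, via $\delta = \min_i (\sigma_1)_i/(\sigma_2)_i$ and the inequality $\lambda^*(\sigma_2)/\lambda^*(\sigma_1) < \delta$, why the paper's requested choice of $\underline{\lambda},\overline{\lambda}$ is actually possible — a small but genuine gap the paper's one-line ``Now, choose$\ldots$'' leaves to the reader.
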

\begin{proof}
Assume by contradiction that there exists $0 < \sigma_1 < \sigma_2$ such that $\nu^*(\sigma_2)<\nu^*(\sigma_1)$. This inequality and Proposition \ref{behavior-components-step} imply that $\lambda^*(\sigma_2) < \lambda^*(\sigma_1)$ and $\lambda^*(\sigma_2) \sigma_2 < \lambda^*(\sigma_1) \sigma_1$. Now, choose positive numbers $\underline{\lambda}$ and $\overline{\lambda}$ such that

\[
\lambda^*(\sigma_2) < \underline{\lambda} < \overline{\lambda}<\lambda^*(\sigma_1)
\]
and

\[
\lambda^*(\sigma_2)\sigma_2 < \underline{\lambda}\sigma_2 < \overline{\lambda}\sigma_1 < \lambda^*(\sigma_1)\sigma_1.
\]
Let $\underline{\Lambda} = (\underline{\lambda}, \underline{\lambda}\sigma_2)$ and $\overline{\Lambda} = (\overline{\lambda}, \overline{\lambda}\sigma_1)$. Thereby, one has $\underline{\Lambda} < \overline{\Lambda}$. Proceeding as in the proof of the Proposition \ref{continuity-components}, we obtain the contradiction $\underline{\lambda} \leq \lambda^*(\sigma_2)$. Thus, we finish the proof.
\end{proof}

Finally, we derive the asymptotic behavior of the hypersurface $\Lambda^*$ stated in the part (IV) of Theorem \ref{properties}.
\begin{prop}
The limit $\lambda^*(\sigma) \to 0$ as $\sigma_i \to \infty$ occurs for each $i = 1, \ldots, m-1$.
\end{prop}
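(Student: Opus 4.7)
The plan is to combine two facts that have already been established earlier in the paper: the upper bound $\lambda^*(\sigma) \leq C_0\, \theta_*(\sigma)$ that falls out of the proof of Lemma \ref{existence_lema2}, and the explicit closed-form expression for $\theta_*(\sigma)$ given by Corollary \ref{Autovalor}.

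First, I would recall that Lemma \ref{super} furnishes a fixed positive map $\rho_0 \in L^n(\Omega;\R^m)$, a tuple $\alpha \in \R^m_+$ with $\Pi\,\alpha = 1$ and a constant $C_0 > 0$ such that $C_0 F(x,t) \ge \rho_0(x) S_\alpha(t)$ for a.e. $x \in \Omega$ and all $t \in \R^m_+$. The supersolution/eigenfunction comparison argument carried out in the proof of Lemma \ref{existence_lema2} then shows that, for every $\sigma \in \R^{m-1}_+$, any element of $\mathcal{T}_\sigma$ is bounded above by $C_0\,\theta_*(\sigma)$, where $\theta_*(\sigma)$ denotes the principal eigenvalue along the direction $(1,\sigma)$ of the weighted Lane--Emden type problem $-\mathcal{L}\varphi = \Lambda\, \rho_0(x) S_\alpha(\varphi)$. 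Taking the supremum over $\mathcal{T}_\sigma$ yields
\[
\lambda^*(\sigma) \;\leq\; C_0\, \theta_*(\sigma) \qquad \text{for every } \sigma \in \R^{m-1}_+.
\]

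Second, I would invoke Corollary \ref{Autovalor} to obtain the explicit formula
\[
\theta_*(\sigma) \;=\; \left(\frac{\lambda_*}{\displaystyle\prod_{j=1}^{m-1} \sigma_j^{\,\prod_{k=1}^{j}\alpha_k}}\right)^{\!\!1/\sum_{j=1}^{m}\prod_{k=1}^{j}\alpha_k}.
\]
For each fixed $i \in \{1,\ldots,m-1\}$, the exponent $\prod_{k=1}^{i}\alpha_k$ is strictly positive because $\alpha \in \R^m_+$. Thus, keeping the remaining coordinates of $\sigma$ fixed, the factor $\sigma_i^{\,\prod_{k=1}^{i}\alpha_k}$ tends to $+\infty$ as $\sigma_i \to \infty$, forcing the entire denominator to blow up. Since the outer exponent $1/\sum_{j=1}^{m}\prod_{k=1}^{j}\alpha_k$ is positive, this yields $\theta_*(\sigma) \to 0$ in the corresponding partial limit. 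Combining with the inequality above gives $\lambda^*(\sigma) \to 0$, as required.

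No serious obstacle is anticipated: every ingredient is already in place, and the argument merely reads off the desired partial limit from the explicit eigenvalue formula of Corollary \ref{Autovalor} together with the comparison estimate implicit in the proof of Lemma \ref{existence_lema2}. The only thing worth double-checking is that the outer exponent indeed has the same sign convention used in Corollary \ref{Autovalor}, so that the map $\sigma_i \mapsto \theta_*(\sigma)$ is genuinely decreasing in each variable with the stated asymptotics; this is immediate from positivity of all the $\alpha_k$.
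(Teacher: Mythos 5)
Your proposal is correct and follows essentially the same route as the paper: bound $\lambda^*(\sigma)$ by $C_0\,\theta_*(\sigma)$ via the argument in Lemma \ref{existence_lema2}, and then read off the partial limit $\theta_*(\sigma)\to 0$ from the explicit formula of Corollary \ref{Autovalor} using positivity of the $\alpha_k$. Your write-up is actually more detailed than the paper's (which is terse and writes $\mu_1(\sigma)$ where it means $\theta_*(\sigma)$), but the underlying argument is the same.
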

\begin{proof}
Let a fixed $i = 1, \ldots, m-1$. It was proved in Lemma \ref{existence_lema2} that $\lambda^*(\sigma) \leq C_0 \mu_1(\sigma)$ for all $\sigma \in \R^{m-1}_+$, where $C_0 > 0$ is a constant independent of $\sigma$. But, thanks to the characterization of $\mu_1(\sigma)$ provided in Corollary \ref{Autovalor}, we know that $\mu_1(\sigma) \to 0$ as $\sigma_i \to \infty$. It then follows the desired conclusion.
\end{proof}

\section{Weak solutions on $\Lambda^*$}

The proof of Theorem \ref{fraca} requires $L^1$ {\it a priori} estimates for strong solutions of \eqref{1} when $\Lambda$ belongs to a neighborhood of the hypersurface $\Lambda^*$.

Let $\delta(x)={\rm dist}(x,\partial\Omega)$. We recall that the $\delta$-weighted $L^1$ space is given by

\[
L^1(\Omega,\delta(x) dx) = L^1(\Omega,\delta) := \left\{h: \Omega \rightarrow \R:\ \int_\Omega |h(x)| \delta(x) dx<\infty\right\}
\]
and endowed with the norm $ \|h\|_{L^1(\Omega,\delta)}=\int_\Omega|h(x)|\delta(x) dx$.

The next two lemmas are useful tools in our proof. The first of them is a straightforward adaptation of the proof of Lemma 1 of \cite{BCMR} since the essential ingredient is the maximum principle assumed for ${\cal L}$. The second one is a consequence of global estimates for Green functions associated to elliptic operators on $C^{1,1}$ domains established in \cite{Ancona1982}, \cite{HueberSieveking1982} and \cite{Zhao1986}.

\begin{lem}\label{uniqueness-weak-solutions-lemma} Let ${\cal L}$ be a uniformly elliptic operator such that $a_{kl} \in C^2(\overline{\Omega})$, $b_j \in C^1(\overline{\Omega})$ for every $k,l, j$, $c \in L^\infty(\Omega)$ and $\mu_1(-{\cal L}, \Omega) > 0$. Then, given $h \in L^1(\Omega,\delta)$, the problem

\begin{gather}\tag{5.1}\label{BrCa-equation}
\left\{
\begin{array}{rlllr}
-\mathcal{L} v &=& \ h(x) & {\rm in} & \Omega, \\
v&=&0 & {\rm on} & \partial\Omega
\end{array}
\right.
\end{gather}
admits a unique weak solution $v \in L^1(\Omega)$. Moreover, there exists a constant $C_1 > 0$ independent of $h$ such that

\[
\|v\|_{L^1(\Omega)} \leq C_1 \|h\|_{L^1(\Omega,\delta)}.
\]
Besides, if $h \geq 0$ almost everywhere in $\Omega$, then $v\geq0$ almost everywhere in $\Omega$.
\end{lem}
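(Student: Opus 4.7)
The plan is to follow the duality scheme of Lemma~1 of \cite{BCMR}, adapted to the nondivergence setting. Under the hypotheses $a_{kl} \in C^2(\overline{\Omega})$, $b_j \in C^1(\overline{\Omega})$ and $c \in L^\infty(\Omega)$, the formal adjoint
\[
{\cal L}^* u = \partial_{kl}(a_{kl} u) - \partial_j(b_j u) + c u
\]
expands into a uniformly elliptic nondivergence operator with $L^\infty$ coefficients and principal eigenvalue equal to $\mu_1(-{\cal L}, \Omega) > 0$. Hence the $L^p$ Calder\'on-Zygmund theory, the strong maximum principle and Hopf's lemma apply equally to ${\cal L}$ and ${\cal L}^*$.

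The decisive ingredient is the dual estimate: for every $g \in L^\infty(\Omega)$, the unique solution $\zeta_g \in W^{2,p}(\Omega) \cap W_0^{1,p}(\Omega)$, $p < \infty$, of $-{\cal L}^* \zeta = g$ in $\Omega$, $\zeta = 0$ on $\partial \Omega$, satisfies
\[
\|\zeta_g/\delta\|_{L^\infty(\Omega)} \leq C \|g\|_{L^\infty(\Omega)},
\]
with $C$ depending only on ${\cal L}^*$ and $\Omega$. I would obtain this by comparing $\zeta_g$ and $\pm \|g\|_{L^\infty} \zeta_0$, where $\zeta_0$ solves $-{\cal L}^* \zeta_0 = 1$ with zero boundary datum; Hopf's lemma on the $C^{1,1}$ boundary yields $c_1 \delta \leq \zeta_0 \leq c_2 \delta$, and the maximum principle for ${\cal L}^*$ does the rest.

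For existence, approximate $h \in L^1(\Omega, \delta)$ by a sequence $h_k \in L^\infty(\Omega)$ (truncating both in $|h|$ and near $\partial \Omega$) with $h_k \to h$ in $L^1(\Omega, \delta)$. Calder\'on-Zygmund provides strong solutions $v_k \in W^{2,p}(\Omega) \cap W_0^{1,p}(\Omega)$ of $-{\cal L} v_k = h_k$. Integration by parts against $\zeta_g$ gives
\[
\int_\Omega v_k\, g\, dx = -\int_\Omega v_k\, {\cal L}^* \zeta_g\, dx = \int_\Omega h_k\, \zeta_g\, dx,
\]
and choosing $g = \mathrm{sign}(v_k - v_\ell)$ together with the dual estimate yields
\[
\|v_k - v_\ell\|_{L^1(\Omega)} \leq C_1 \|h_k - h_\ell\|_{L^1(\Omega, \delta)}.
\]
Thus $(v_k)$ is Cauchy in $L^1(\Omega)$ and its limit $v$ obeys $\|v\|_{L^1(\Omega)} \leq C_1 \|h\|_{L^1(\Omega, \delta)}$. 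To pass to the limit in $-\int_\Omega v_k {\cal L}^* \zeta \, dx = \int_\Omega h_k \zeta\, dx$ for any admissible test $\zeta$, I would use the same Hopf bound $|\zeta| \leq C \|{\cal L}^* \zeta\|_{L^\infty} \delta$ so that dominated convergence applies to both sides. Nonnegativity when $h \geq 0$ transfers from the positivity of each $v_k$ (guaranteed by the maximum principle for ${\cal L}$) to $v$.

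Uniqueness is immediate: if $v_1, v_2$ are two weak solutions, then $w = v_1 - v_2 \in L^1(\Omega)$ satisfies $\int_\Omega w\, {\cal L}^* \zeta\, dx = 0$ for every admissible $\zeta$; testing with $\zeta = \zeta_g$ for arbitrary $g \in L^\infty(\Omega)$ gives $\int_\Omega w g\, dx = 0$, whence $w \equiv 0$. The only delicate step in the whole argument is the global Hopf-type inequality $\|\zeta_g/\delta\|_{L^\infty} \lesssim \|g\|_{L^\infty}$: this is precisely where the $C^2$/$C^1$ smoothness of the coefficients and the $C^{1,1}$ regularity of $\partial \Omega$ are needed, and it is the point at which the paper's reference to the Green function estimates of \cite{Ancona1982, HueberSieveking1982, Zhao1986} becomes relevant.
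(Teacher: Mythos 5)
Your proof is correct and carries out exactly the adaptation of Lemma 1 of Brezis--Cazenave--Martel--Ramiandrisoa that the paper invokes: construct the adjoint ${\cal L}^*$, establish the barrier bound $|\zeta_g|\leq C\|g\|_{L^\infty}\delta$ by comparison with $\zeta_0$ and Hopf's lemma, and run the duality/approximation scheme. One small correction: the Green function estimates of Ancona, Hueber--Sieveking and Zhao are cited in the paper for the lower bound in Lemma~\ref{HL-estimate}, not for this lemma -- the elementary comparison with $\zeta_0$ that you describe already suffices here, and the $C^2$/$C^1$ coefficient regularity is used so that the expanded ${\cal L}^*$ is an admissible nondivergence-form operator for the Calder\'on--Zygmund theory, not for the Hopf step itself.
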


\begin{lem}\label{HL-estimate}
Let ${\cal L}$ be a uniformly elliptic operator such that $a_{kl} \in C^2(\overline{\Omega})$, $b_j \in C^1(\overline{\Omega})$ for every $k,l, j$, $c \in L^\infty(\Omega)$ and $\mu_1(-{\cal L}, \Omega) > 0$. Let $h \in L^\infty(\Omega)$ such that $h\geq0$ almost everywhere in $\Omega$. Then, the strong solution $v \in W^{2,n}(\Omega) \cap W_0^{1,n}(\Omega)$ of \eqref{BrCa-equation} satisfies

\[
v(x) \geq C_2 \delta(x) \|h\|_{L^1(\Omega,\delta)}
\]
for $x \in \Omega$ almost everywhere, where $C_2$ is a positive constant independent of $h$.
\end{lem}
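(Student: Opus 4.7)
The plan is to proceed by Green function representation together with the classical global two-sided estimates for Green functions on $C^{1,1}$ domains. Under the standing hypotheses ($a_{kl}\in C^2(\overline\Omega)$, $b_j \in C^1(\overline\Omega)$, $c\in L^\infty(\Omega)$ and $\mu_1(-\mathcal L,\Omega)>0$), the Dirichlet Green function $G(x,y)$ of $-\mathcal L$ is well-defined, positive off the diagonal, and satisfies two-sided pointwise bounds of the Ancona--Hueber--Sieveking--Zhao type. In particular, the lower bound takes the form
\[
G(x,y) \;\geq\; c_0 \min\!\left\{ |x-y|^{2-n},\ \frac{\delta(x)\,\delta(y)}{|x-y|^{n}}\right\}
\]
when $n\geq 3$, with a logarithmic analogue in the case $n=2$. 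Since $h\in L^\infty(\Omega)$, the $L^p$ Calder\'on--Zygmund theory gives the representation
\[
v(x) \;=\; \int_\Omega G(x,y)\, h(y)\, dy \quad \text{for a.e. } x\in\Omega.
\]

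The central observation is that, because $\Omega$ is bounded, $|x-y|\leq \operatorname{diam}(\Omega)$, and so the two-sided estimate can be collapsed into the uniform lower bound
\[
G(x,y) \;\geq\; C_2\, \delta(x)\,\delta(y), \qquad x,y\in\Omega,\ x\neq y,
\]
with $C_2>0$ depending only on $n$, $\Omega$, and the coefficients of $\mathcal L$. Indeed, in the ``far'' regime $|x-y|\geq c\max\{\delta(x),\delta(y)\}$ one simply estimates $|x-y|^{-n}\geq \operatorname{diam}(\Omega)^{-n}$, while in the ``near'' regime $|x-y|\leq c\min\{\delta(x),\delta(y)\}$ the two weights $\delta(x),\delta(y)$ are comparable to each other and small relative to $|x-y|^{-(n-2)}$, so the inequality holds a fortiori (with the usual adaptation via $\log$ when $n=2$). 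Once this uniform lower bound is in hand, using $h\geq 0$ gives
\[
v(x) \;\geq\; C_2\, \delta(x) \int_\Omega \delta(y)\, h(y)\, dy \;=\; C_2\, \delta(x)\, \|h\|_{L^1(\Omega,\delta)},
\]
which is the claimed estimate.

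The only real obstacle is justifying the Green function estimate in our nondivergence setting: the works of Ancona, Hueber--Sieveking and Zhao are phrased primarily for divergence form operators or the Laplacian plus lower-order terms. Under the present smoothness ($a_{kl}\in C^2$, $b_j\in C^1$), however, the operator $\mathcal L$ can be put into the framework covered by these results, either by passing to a divergence form operator (using that the coefficients are smooth enough to distribute derivatives) or by applying the general potential-theoretic extensions available for uniformly elliptic operators with the strong maximum principle on $C^{1,1}$ domains. Alternatively, the two-sided Green function estimates follow directly from the pointwise comparability between $G(x,y)$ and the product $\delta(x)\delta(y)$ times the Green function of a model operator (e.g.\ the Laplacian), a fact that holds because $\mathcal L$ has a positive principal eigenfunction $\varphi_1$ with $\varphi_1\asymp\delta$ up to the boundary by Hopf's lemma. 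This is the step that warrants the citations given in the statement; once granted, the proof reduces to the two lines displayed above.
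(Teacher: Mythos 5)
Your proposal is correct and follows the same strategy as the paper: represent $v$ via the Green function and apply the global lower bound $G(x,y)\geq C_2\,\delta(x)\delta(y)$, which the paper obtains by combining the two-sided comparison $G_{\mathcal L}\asymp G_\Delta$ from Ancona and Hueber--Sieveking with Zhao's estimate $G_\Delta(x,y)\geq\tilde C\,\delta(x)\delta(y)$, and then integrating against $h\geq 0$. One small remark: the ``near/far'' regime split you use to collapse the $\min$ is not exhaustive (pairs $(x,y)$ with $c\min\{\delta(x),\delta(y)\}<|x-y|<c\max\{\delta(x),\delta(y)\}$ fall into neither), but it is also unnecessary---on a bounded domain $|x-y|\leq\operatorname{diam}(\Omega)$, so both branches of the $\min$ are directly bounded below by $\operatorname{diam}(\Omega)^{-n}\delta(x)\delta(y)$ without any case distinction.
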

\begin{proof}
Let $G_{\cal L}$ and $G_{\Delta}$ be Green's functions associated to the operators $-\mathcal{L}$ and $-\Delta$ with zero boundary condition. One knows, independently from \cite{Ancona1982} and \cite{HueberSieveking1982}, that there exists a constant $C > 0$, depending on $\Omega$ and coefficients of $\mathcal{L}$, such that for any $(x,y) \in \Omega \times \Omega$ with $x \neq y$ and $x, y \neq 0$,

\[
\frac1C G_{\Delta}(x,y) \leq G_{\cal L} (x,y) \leq C G_{\Delta}(x,y).
\]
On the other hand, as proved in \cite{Zhao1986}, there exists a constant $\tilde{C} > 0$, depending only on $\Omega$, such that for any $(x,y) \in \Omega \times \Omega$ with $x \neq y$ and $x, y \neq 0$,

\[
G_\Delta(x,y)\geq\tilde{C}\delta(x)\delta(y).
\]
For these estimates, we get a constant $C_2 > 0$, depending on $\Omega$ and coefficients of $\mathcal{L}$, such that for any $(x,y) \in \Omega \times \Omega$ with $x \neq y$ and $x, y \neq 0$,

\[
G_{\cal L}(x,y) \geq C_2\delta(x)\delta(y).
\]
Thus, the strong solution $v \in W^{2,n}(\Omega) \cap W_0^{1,n}(\Omega)$ of \eqref{BrCa-equation} satisfies

\[
v(x)=\int_\Omega G_{\cal L}(x,y) h(y) dy \geq C_2 \delta(x)\left(\int_\Omega h(y)\delta(y)\ dy\right).
\]
This ends the proof.
\end{proof}

Before stating the next lemma, we quote a direct consequence of the assumptions (A) and (C).

Let $\kappa$, $\rho$, $\alpha$ and $M$ be as in (C). Thanks to the positivity of $F$ and $\rho$ and the compactness of $\{t \in \R^n: |t| \leq M\}$, there exists a constant $B > 0$ depending on $\kappa$, so that

\begin{gather}\tag{5.2} \label{lower}
F(x,t) \geq \kappa \rho(x) S_\alpha(t)-B \rho(x)
\end{gather}
for $x \in \Omega$ almost everywhere and $t \in \R^m_+$.

In the following result we establish an {\it a priori} estimate to strong solutions of \eqref{1} by using Lemmas \ref{uniqueness-weak-solutions-lemma} and \ref{HL-estimate}.

\begin{lem}\label{boundedness-of-u}
Let a fixed $\sigma \in \R_+^{m-1}$ and ${\cal L}_i$ be a uniformly elliptic operator such that $a^i_{kl} \in C^2(\overline{\Omega})$, $b^i_j \in C^1(\overline{\Omega})$ and $c^i \in L^\infty(\Omega)$ for every $i, k, l, j$. Let $F:\Omega\times \R^m \to \R^m$ be a map such that $F(x, \cdot): \R^m \to \R^m$ is continuous for $x \in \Omega$ almost everywhere and $F(\cdot, t): \Omega \to \R^m$ belongs to $L^n(\Omega; \R^m)$ for every $t \in \R^m$. Assume also that $F$ satisfies (A), (B) and (C). Then, there exists a constant $C_3 > 0$ such that for any $\frac{1}{2}\lambda^*(\sigma) < \lambda < \lambda^*(\sigma)$,

\[
\|u\|_{L^1(\Omega;\mathbb{R}^m)} \leq C_3
\]
for all positive strong solutions $u$ of \eqref{1} corresponding to $\Lambda=(\lambda,\lambda\sigma)$.
\end{lem}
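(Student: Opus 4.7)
My strategy adapts the scalar argument of Brezis--Cazenave--Martel--Ramiandrisoa \cite{BCMR} (as sketched in the Introduction for the scalar problem \eqref{2}) to the system setting by combining the boundary-weighted Hopf-type lower bound supplied by Lemma~\ref{HL-estimate} with the strong superlinear bound \eqref{lower}, and then closing the resulting cyclic chain of inequalities via the coupling $\Pi\,\alpha = 1$. Throughout, fix $\sigma\in\R^{m-1}_+$, let $u = (u_1,\dots,u_m)$ be any positive strong solution of \eqref{1} with $\Lambda = (\lambda,\lambda\sigma)$ and $\frac{1}{2}\lambda^*(\sigma) < \lambda < \lambda^*(\sigma)$, write $\lambda_i = \lambda\sigma_{i-1}$ with the convention $\sigma_0 := 1$, and set
\[
a_i := \int_\Omega f_i(\cdot,u)\,\delta\, dx,\qquad i = 1,\dots,m,
\]
with indices read cyclically (so $a_{m+1}\equiv a_1$, and similarly for $\lambda$, $\alpha$, $\rho$).

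\textbf{Cyclic a priori recursion.} Since $u$ is a $W^{2,n}$-strong solution, $F(\cdot, u)\in L^n(\Omega;\R^m)\subset L^1(\Omega,\delta)$, and the Green-function estimate underlying Lemma~\ref{HL-estimate} applied to $-{\cal L}_{i+1}u_{i+1} = \lambda_{i+1}f_{i+1}(\cdot,u)$ yields
\[
u_{i+1}(x) \geq C_2\,\delta(x)\,\lambda_{i+1}\,a_{i+1} \qquad\text{a.e.\ in }\Omega.
\]
Now invoke \eqref{lower} with a parameter $\kappa > 0$ to be chosen, multiply by $\delta(x)$, integrate over $\Omega$, and insert this pointwise lower bound to obtain
\[
a_i \geq \kappa\int_\Omega \rho_i u_{i+1}^{\alpha_i}\,\delta\, dx - B\int_\Omega \rho_i\,\delta\, dx \geq D_i\,a_{i+1}^{\alpha_i} - M_i,
\]
where
\[
D_i := \kappa\,C_2^{\alpha_i}\,\lambda_{i+1}^{\alpha_i}\,\gamma_i,\qquad \gamma_i := \int_\Omega \rho_i\,\delta^{\alpha_i+1}\,dx,\qquad M_i := B\int_\Omega \rho_i\,\delta\, dx,
\]
all finite. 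Equivalently, $a_{i+1} \leq D_i^{-1/\alpha_i}(a_i + M_i)^{1/\alpha_i}$ for each cyclic $i$.

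\textbf{Closing the loop.} Iterating this inequality around the full cycle $1\to 2\to\cdots\to m\to 1$ and using $\Pi\,\alpha = 1$ (so that the cumulative exponent on $a_1$ after one full traversal equals $1/\prod_i\alpha_i = 1$), while separating the $M_j$-contributions via the elementary bound $(x+y)^p\leq C_p(x^p+y^p)$, one reaches a relation of the form
\[
a_1 \leq C(\kappa)\,a_1 + R(\kappa),\qquad C(\kappa) := c_\alpha\prod_{i=1}^m D_i^{-1/\hat\alpha_i},\quad \hat\alpha_i := \prod_{j=i}^m\alpha_j,
\]
where $c_\alpha$ absorbs the universal constants from the iterated $(x+y)^p$-inequality and $R(\kappa)$ is a finite combination of $D_j$'s and $M_j$'s, independent of any $a_j$. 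Because $D_i$ is proportional to $\kappa$, $C(\kappa) = O(\kappa^{-\sum_i 1/\hat\alpha_i})\to 0$ as $\kappa\to\infty$; the remaining factors in $C(\kappa)$ and $R(\kappa)$ depend only on $\Omega, \rho, \alpha, \sigma$ and on the uniform two-sided bounds $\tfrac{1}{2}\sigma_i\lambda^*(\sigma) \leq \lambda_{i+1} \leq \sigma_i\lambda^*(\sigma)$ forced by $\lambda\in(\tfrac{1}{2}\lambda^*(\sigma),\lambda^*(\sigma))$. Hence I fix $\kappa$ once and for all, large enough and independent of $\lambda$, so that $C(\kappa) \leq \tfrac{1}{2}$; this yields $a_1 \leq 2R(\kappa)$, and by cyclic symmetry each $a_i$ is uniformly bounded.

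\textbf{From $\int f_i\delta$ to $\|u\|_{L^1}$; main obstacle.} Let $\zeta_i^* \in W^{2,n}(\Omega)\cap W_0^{1,n}(\Omega)$ be the strong solution of $-{\cal L}_i^* \zeta_i^* = 1$ in $\Omega$ with $\zeta_i^* = 0$ on $\partial\Omega$; the smoothness assumptions on $a^i_{kl}$, $b^i_j$ together with Hopf's lemma and standard boundary regularity give $\zeta_i^*(x) \leq C\,\delta(x)$ in $\Omega$. Pairing $\zeta_i^*$ with the $i$-th equation of \eqref{1},
\[
\int_\Omega u_i\, dx = \lambda_i\int_\Omega f_i(\cdot,u)\,\zeta_i^*\, dx \leq C\lambda_i\,a_i \leq C_3,
\]
uniformly in $\lambda$, and summing over $i$ produces the desired bound $\|u\|_{L^1(\Omega;\R^m)}\leq C_3$. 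The main obstacle is the cyclic closure: the condition $\Pi\,\alpha = 1$ is indispensable because it forces the exponent on $a_1$ after a full cycle to be \emph{exactly} $1$, so the composite inequality is a genuine contraction in $a_1$ (a sub- or super-linear exponent would not allow absorption). The freedom in (C) to take $\kappa$ arbitrarily large then supplies the required smallness $C(\kappa)<1$, while the lower bound $\lambda > \tfrac{1}{2}\lambda^*(\sigma)$ keeps the factors $\lambda_{i+1}^{\alpha_i}$ in each $D_i$ bounded below uniformly.
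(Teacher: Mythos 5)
Your proposal is correct and takes a genuinely different route from the paper's proof. The paper argues by contradiction: assuming a sequence $u_k$ with $\|u_k\|_{L^1}\to\infty$, it introduces the renormalizing powers $s_1=1,\ s_{i+1}=\alpha_is_i$ (which close cyclically thanks to $\Pi\,\alpha=1$) and, after passing to a subsequence, isolates a single \emph{leading index} $l$ for which $\|u^k_{l+1}\|^{\alpha_l}_{L^1}\ge\|u^k_l\|_{L^1}$. It then needs only the single lower bound \eqref{lower} on the $l$-th equation together with the adjoint test function $\zeta^*_l$, Lemma~\ref{uniqueness-weak-solutions-lemma}, Lemma~\ref{HL-estimate}, and a large $\kappa$ to force $1\ge 2-O(e_k^{-\alpha_l})$. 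Your proof instead produces a direct a priori bound: you derive the full cyclic chain $a_i\ge D_i a_{i+1}^{\alpha_i}-M_i$, traverse the whole cycle, observe that $\Pi\,\alpha=1$ makes the cumulative exponent on $a_1$ exactly $1$, and then absorb the self-referential term by taking $\kappa$ large so that $C(\kappa)\le\tfrac12$. Both proofs are built from the same ingredients — \eqref{lower}, Lemmas~\ref{uniqueness-weak-solutions-lemma} and \ref{HL-estimate}, the normalization $\Pi\,\alpha=1$, and the freedom in $\kappa$ — but the paper's leading-index device lets it avoid iterating around the entire cycle and tracking the propagated exponents and subadditivity constants, whereas yours is more constructive and, in principle, yields an explicit bound. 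You did not write out the bookkeeping of the iterated composition, but the exponents $-1/\hat\alpha_i$ on the $D_i$, the finiteness of $R(\kappa)$ for fixed $\kappa$, the uniformity in $\lambda\in(\tfrac12\lambda^*(\sigma),\lambda^*(\sigma))$, and the final passage from $a_i$ to $\|u\|_{L^1}$ via $\zeta^*_i\le C\delta$ all check out; the only minor looseness is the phrase ``by cyclic symmetry,'' which is better read as propagating the bound $a_1\le 2R(\kappa)$ through the recursion, or restarting the loop at any base index $i_0$.
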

\begin{proof}
Assume by contradiction that there is no such a bound for strong solutions. In other words, there exist $\lambda_k \in (\frac{1}{2}\lambda^*(\sigma) , \lambda^*(\sigma))$ and a positive strong solution $u_k$ of \eqref{1} associated to $\Lambda_k=(\lambda_k,\lambda_k\sigma)$ such that $\|u_k\|_{L^1(\Omega;\mathbb{R}^m)} \to \infty$ as $k \rightarrow \infty$.

Let $s_1 = 1$ and $s_{i+1} = \alpha_i s_i$ for $i = 1, \ldots, m$. Note that $s_{m+1} = s_1$, since $\Pi \alpha =1$. Observing that all $s_i$ are positive, there is a leading index $l \in \{1, \ldots, m\}$ in the sense that, module a subsequence,

\[
\|u_{l+1}^k\|_{L^1(\Omega)} \to \infty \text{ and }\|u_{l+1}^k\|^{s_{l+1}}_{L^1(\Omega)} \geq \|u_i^k\|^{s_i}_{L^1(\Omega)} \text{   for all } i = 1, \ldots, m.
\]
In particular, for $i = l$, we have

\begin{gather}\tag{5.3}\label{5.3-eq2}
\|u_{l+1}^k\|_{L^1(\Omega)} \to \infty \text{ and }\|u_{l+1}^k\|^{\alpha_l}_{L^1(\Omega)} \geq \|u_l^k\|_{L^1(\Omega)}.
\end{gather}

Let $\zeta^*_l \in W^{2,n}(\Omega) \cap W_0^{1,n}(\Omega)$ be the strong solution of

\[
\left\{\begin{array}{rlllr}
-\mathcal{L}_{l}^*\zeta^*_l&=&1 & \text{ in }& \Omega,\cr
\zeta^*_l&=&0 &\text{ on }& \partial\Omega.
\end{array}\right.
\]
Since $\lambda_k > \frac{1}{2}\lambda^*(\sigma)$, we can choose $\kappa$ in \eqref{lower} such that

\[
\kappa(C_1C_2)^{\alpha_{l}}\lambda_k\sigma_{l-1}\int_\Omega \rho_{l}(x)\delta(x)^{\alpha_{l} }\zeta^*_l(x) dx \geq 2
\]
for every $k \geq 1$, where $C_1$ and $C_2$ are the positive constants given respectively in Lemmas \ref{uniqueness-weak-solutions-lemma} and \ref{HL-estimate}.

Let $e_k=\| u_{l+1}^k\|_{L^1(\Omega)}$. Thanks to \eqref{lower} and \eqref{5.3-eq2}, we get

\begin{eqnarray*}
1 &\geq& e_k^{-\alpha_{l}} \int_\Omega u_{l}^k\ dx = e_k^{-\alpha_{l}} \int_\Omega \left(-\mathcal{L}_{l} u_{l}^k\right)\zeta^*_l dx = e_k^{-\alpha_{l}} \lambda_k \sigma_{l-1} \int_\Omega f_{l}(x,u_k) \zeta^*_l dx\cr
&\geq& e_k^{-\alpha_{l}} \kappa\lambda_k \sigma_{l-1} \int_\Omega \rho_{l}(x)\left(u^k_{l+1}\right)^{\alpha_{l}} \zeta^*_l dx-B e_k^{-\alpha_{l}}\lambda_k\sigma_{l-1}\int_\Omega \rho_{l}(x)\zeta^*_l dx \cr .
\end{eqnarray*}
The first term on the right-hand side is then estimated as follows. By Lemmas \ref{uniqueness-weak-solutions-lemma} and \ref{HL-estimate}, we have

\[
u_{l+1}^k(x) \geq C_2 \delta(x)\int_\Omega \lambda_k\sigma_{l} f_{l+1}(x,u_k)\delta(x) dx \geq C_1 C_2\delta(x) \|u_{l+1}^k\|_{L^1(\Omega)},
\]
so that

\[
e_k^{-\alpha_{l}}\int_\Omega \rho_{l}(x)\left(u^k_{l+1}\right)^{\alpha_{l}}\zeta^*_l dx \geq \left(C_1C_2\right)^{\alpha_{l}}\int_\Omega \rho_{l}(x)\delta(x)^{\alpha_{l}}\zeta^*_l dx.
\]
Finally, letting the limit $k \to \infty$ in the inequalities

\begin{eqnarray*}
1 &\geq& \kappa\lambda_k e_k^{-\alpha_{l}}\sigma_{l-1}\int_\Omega \rho_{l}(x)\left(u^k_{l+1}\right)^{\alpha_{l}}\zeta^*_l dx-B e_k^{-\alpha_{l}}\lambda_k \sigma_{l-1}\int_\Omega \rho_{l}(x)\zeta^*_l dx \cr
&\geq& \kappa\left(C_1C_2\right)^{\alpha_{l}}\lambda_k\sigma_{l-1}\int_\Omega \rho_{l}(x)\delta(x)^{\alpha_{l} }\zeta^*_l(x) dx-B e_k^{-\alpha_{l}}\lambda_k\sigma_{l-1}\int_\Omega \rho_{l}(x)\zeta^*_l dx \cr
&\geq& 2-O\left( e_k^{-\alpha_{l}}\right),
\end{eqnarray*}
we arrive at a contradiction. This concludes the proof.
\end{proof}

\begin{proof}[Proof of Theorem \ref{fraca}]
Let $\Lambda = (\lambda^*(\sigma),\lambda^*(\sigma)\sigma) \in \Lambda^*$ for some fixed $\sigma \in \R_+^{m-1}$. The proof that exists a minimal nonnegative weak solution for \eqref{1} associated to $\Lambda$ is done by approximation. Take $\lambda_k$ satisfying $0 < \lambda_k < \lambda^*(\sigma)$ and $\lambda_k \uparrow \lambda^*(\sigma)$ as $k \to \infty$. For each $k$, let $u_k$ be the minimal positive strong solution corresponding to $\Lambda_k := (\lambda_k, \lambda_k \sigma)$. By $(I)$ of Theorem \ref{separation} and the condition (B), it follows that $F(x,u_k)$ is nondecreasing on $k$. Take now the strong solution $\zeta^* \in W^{2,n}(\Omega;\mathbb{R}^m) \cap W_0^{1,n}(\Omega;\mathbb{R}^m)$ of the problem

\[
\left\{\begin{array}{rlllr}
-\mathcal{L}^*\zeta^* &=& 1& \text{ in }& \Omega,\cr
\zeta^* &=& 0 &\text{ on }& \partial\Omega
\end{array}\right.
\]
as a test function in \eqref{1}, so that

\[
\Lambda_k \int_\Omega F(x,u_k)\zeta^* dx = \int_\Omega u_k dx.
\]
Invoking Lemma \ref{boundedness-of-u}, we conclude that $u_k$ and $F(\cdot,u_k) \delta(\cdot)$ are uniformly bounded in $L^1(\Omega;\mathbb{R}^m)$. Therefore, by the monotone convergence theorem, $(u_k)$ and $(F(\cdot,u_k) \delta(\cdot))$ converge respectively to $u^*$ and $F(\cdot, u^*) \delta(\cdot)$ in $L^1(\Omega;\mathbb{R}^m)$. So, letting $k \rightarrow \infty$ in the equality

\[
\int_\Omega u_k(-\mathcal{L}^* \zeta) dx = \Lambda_k \int_\Omega F(x,u_k) \zeta dx,
\]
where $\zeta \in W^{2,n}(\Omega; \R^m) \cap W_0^{1,n}(\Omega; \R^m)$ satisfies ${\cal L}^* \zeta \in L^\infty(\Omega; \R^m)$, we deduce that $u^*$ is a nonnegative weak solution of \eqref{1} associated to $\Lambda$.

Finally, one easily shows that $u^*$ is minimal. Indeed, let $v$ be another nonnegative weak solution of \eqref{1}. By the construction of $u_k$ in the proof of Lemma \ref{existence_lema1} and, by Lemma \ref{HL-estimate}, we conclude that $u_k\leq v$ almost everywhere in $\Omega$, which clearly lead to the minimality of $u^*$.
\end{proof}

\section{Stability of minimal solutions}

We now dedicate special attention to the proof of Theorem \ref{stability}. We recall that the minimal positive strong solution $u = u_\Lambda$ of \eqref{1} for $\Lambda \in {\cal A}$ is said to be stable if the eigenvalue problem \eqref{4} has a principal eigenvalue $\eta_1 \geq 0$ and asymptotically stable if $\eta_1 > 0$.

\begin{proof}[Proof of Theorem \ref{stability}]

As mentioned in the introduction, the existence of a principal eigenvalue $\eta_1 = \eta_1(-{\cal L} - A(x,u_\Lambda))$ for \eqref{4} is ensured by \eqref{5} and assumptions (B) and (D).

We first assert that $u_\Lambda$ is stable. Assume by contradiction that $\eta_1 < 0$. Let $\varphi \in W^{2,n}(\Omega; \R^m) \cap W_0^{1,n}(\Omega; \R^m)$ be a positive eigenfunction associated to $\eta_1$. Set $\overline{u}_\varepsilon = u_\Lambda - \varepsilon \varphi$ for $\varepsilon > 0$. Clearly, we have $\overline{u}_\varepsilon = 0$ on $\partial \Omega$ and, by Hopf's Lemma, $\overline{u}_\varepsilon > 0$ in $\Omega$ for $\varepsilon > 0$ small enough.

Using the hypothesis that $F(x, \cdot): \R^m \to \R^m$ is of $C^1$ class uniformly on $x \in \Omega$ and $t$ in compacts of $\R^m$, we get for any $\varepsilon > 0$ small enough,

\begin{eqnarray*}
-{\cal L} \overline{u}_\varepsilon &=& -{\cal L} u_\Lambda + \varepsilon{\cal L} \varphi\\
&=& \Lambda F(x, u_\Lambda) + \varepsilon \left( - \Lambda (A(x, u_\Lambda) \varphi) - \eta_1 \varphi\right)\\
&=& \Lambda F(x, \overline{u}_\varepsilon) + r(x, \varepsilon \varphi) - \eta_1 \varepsilon \varphi \\
&>& \Lambda F(x, \overline{u}_\varepsilon) \ \ {\rm in}\ \Omega
\end{eqnarray*}
For the latter inequality, it was used that $\eta_1 < 0$, $\varphi > 0$ in $\Omega$, $\varphi = 0$ on $\partial \Omega$, $r(x, \varepsilon \varphi) = o(\varepsilon \sum_j \varphi_j)$ and Hopf's Lemma. Thus, $\overline{u}_\varepsilon$ is a positive supersolution of eqref{1} as well as the null map is a subsolution by (A). Mimicking the construction presented in the proof of Lemma \ref{existence_lema1}, we obtain a positive strong solution $u$ for \eqref{1} satisfying $u \leq \overline{u}_\varepsilon$ in $\Omega$ for $\varepsilon > 0$ small enough. But this contradicts the fact that $\overline{u}_\varepsilon < u_\Lambda$ in $\Omega$ and $u_\Lambda$ is a minimal positive strong solution of \eqref{1}. Hence, we conclude that $\eta_1 \geq 0$.

We now focus on asymptotic stability. Let $\Lambda = (\lambda_1, \ldots, \lambda_m)$ and $u_\Lambda = (u_1, \ldots, u_m)$. Denote by $B(x)$ the matrix whose elements are given by $B_{ij}(x) = \lambda_i \frac{\partial f_i}{\partial u_j}(x, u_\Lambda(x))$ and by $B(x)^T$ its transposed. We know that $\eta_1 = \eta_1(-{\cal L} - B(x))$.

Suppose by contradiction that $\eta_1 = 0$. Assume that $a^i_{kl} \in C^2(\overline{\Omega})$ and $b^i_j \in C^1(\overline{\Omega})$ for every $i, j, k, l$. In this case, since $\eta_1 = \eta_1^* = \eta_1(-{\cal L}^* - B(x)^T)$, there exists a positive eigenfunction $\varphi^* = (\varphi^*_1, \ldots, \varphi^*_m) \in W^{2,n}(\Omega; \R^m) \cap W_0^{1,n}(\Omega; \R^m)$ associated to $\eta_1^*$, that is,

\begin{gather}\tag{6.1} \label{6}
\left\{
\begin{array}{rlllr}
-{\cal L}^* \varphi^* &=& B(x)^T \varphi^* & {\rm in} & \Omega, \\
\varphi^* &=& 0 & {\rm on}& \partial \Omega.
\end{array}\right.
\end{gather}
Multiplying the $i$th equation of \eqref{6} by $u_i$, integrating by parts, using the ith equation satisfied by $u_\Lambda$ in \eqref{1} and lastly summarizing on $i$, we get

\[
\sum_{i = 1}^m \int_\Omega \varphi^*_i \lambda_i f_i(x, u_\Lambda(x)) dx = \sum_{i = 1}^m \int_\Omega \sum_{j = 1}^m \lambda_j \frac{\partial f_j}{\partial u_i}(x, u_\Lambda(x)) \varphi^*_j u_i.
\]
Renaming the indexes $i$ and $j$ on the right-hand side, the equality can be rewritten as

\[
\sum_{i = 1}^m \int_\Omega \left( \lambda_i f_i(x, u_\Lambda(x)) - \sum_{j = 1}^m \lambda_i \frac{\partial f_i}{\partial u_j}(x, u_\Lambda(x)) u_j \right) \varphi^*_i dx = 0,
\]
or succinctly,

\[
\int_\Omega \left( \Lambda F(x, u_\Lambda) - \Lambda (A(x, u_\Lambda) u_\Lambda\right) \varphi^* dx = 0.
\]
Arguing in a similar way with $\Upsilon \in {\cal A}$, we also obtain

\[
\int_\Omega \left( \Upsilon F(x, u_\Upsilon) - \Lambda (A(x, u_\Lambda) u_\Upsilon\right) \varphi^* dx = 0.
\]
Subtracting both above inequalities, we easily discover that

\[
\int_\Omega \Lambda \left( F(x, u_\Upsilon) - F(x, u_\Lambda) - A(x, u_\Lambda) (u_\Upsilon - u_\Lambda) \right) \varphi^* dx = \int_\Omega (\Lambda - \Upsilon) F(x, u_\Upsilon) \varphi^* dx.
\]
Choosing $\Upsilon > \Lambda$ and using that $F$ is convex, we arrive at a contradiction, once the above right-hand side is negative, while the left-hand side is nonnegative. This completes the proof.
\end{proof}

\section{Regularity of extremal solutions for $n=2$ and $n=3$}

In this section we consider the problem

\begin{gather}\tag{7.1}\label{gradient-system}
\left\{
\begin{array}{rlllr}
-{\Delta} u &=& \Lambda \nabla f(u) & {\rm in} & \Omega, \\
u&=&0 & {\rm on} & \partial \Omega,
\end{array}\right.
\end{gather}
where $\Delta u = (\Delta u_1,\ldots,\Delta u_m)$, $\Lambda = (\lambda_1,\ldots, \lambda_m) \in \R^m_+$, $f:\R^m_+ \rightarrow \R$ is a positive $C^2$ function and $\Lambda \nabla f(u)=(\lambda_1f_{u_1}(u),\ldots,\lambda_mf_{u_m}(u))$ with $m \geq 1$.

Assume that the potential field $F = \nabla f$ satisfies the conditions (A), (B) and (C) and, moreover, $\Hessian f(t)>0$ for all $t \in \R^m_+$. The latter implies that the Jacobian matrix of $F$ satisfies (D).

Under these conditions, consider the parameter $\Lambda = (\lambda^*(\sigma), \lambda^*(\sigma) \sigma) \in \Lambda^*$ for a fixed $\sigma \in \R^{m-1}_+$. Let $(\lambda_k)$ be a sequence converging to $\lambda^*(\sigma)$ so that $0 < \lambda_k < \lambda^*(\sigma)$ for every $k \geq 1$. Then, each minimal positive strong solution $u_{\Lambda_k}$ for $\Lambda_k = (\lambda_k, \lambda_k \sigma)$ is stable and its $L^1$-limit

\[
\lim_{k \rightarrow \infty} u_{\Lambda_k} = u_\Lambda^*
\]
is an extremal solution associated to $\Lambda$.

In this section we will prove that $u_\Lambda^*$ is bounded when $\Omega$ is convex and $n=2, 3$. The key point here is an estimate for the solutions $u_{\Lambda_k}$ in the space $W^{1,4}$ in a neighborhood of $\partial\Omega$.

\begin{prop}\label{prop-boundary-estimative}
Let $f\in C^{2}(\mathbb{R}^m)$ be a positive function satisfying $\Hessian f(t)>0$ for all $t \in\R^m_+$. Assume that the potential field $F = \nabla f$ satisfies (A), (B) and (C). Let also $\Lambda = (\lambda_1,\ldots, \lambda_m) \in \mathcal{A}$ and $u_{\Lambda}=(u_1,\ldots,u_m)$ be the stable minimal positive strong solution of (\ref{gradient-system}). If $n=2$ or $n=3$, then for any $t>0$,

\begin{gather}
\tag{7.2}\label{boudary-estimate}
\begin{array}{lcl}
\|u_i\|_{L^{\infty}(\Omega)} & \leq & t + \displaystyle\dfrac{C(n)}{t}|\Omega|^{\frac{4-n}{2n}}\sqrt{\lambda_i}\left(\sum_k\frac{1}{\lambda_k}\int_{\{u_k<t\}}|\nabla u_k|^4dx\right)^{1/2}\nonumber\cr
& & + C(n)|\Omega|^{\frac{4-n}{2n}}\displaystyle\sqrt{\lambda_i}\left(\sum_{k<l}\int_{\Omega\setminus\{u_k \geq t,u_l\geq t\}}f_{u_ku_l}(u)|\nabla u_k||\nabla u_l|dx\right)^{1/2},\end{array}
\end{gather}
where $\{u_k<t\}=\{x\in\Omega \mid u_k(x)<t\}$.

\end{prop}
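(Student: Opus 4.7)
The plan is to combine the Morrey--Sobolev embedding $W_0^{1,4}(\Omega)\hookrightarrow L^\infty(\Omega)$, available because $n\in\{2,3\}<4$, with the stability inequality furnished by Theorem \ref{stability}. Since $F=\nabla f$ with $\Hessian f>0$, the linearized operator is symmetric under the rescaling $\psi_i=\varphi_i/\sqrt{\lambda_i}$ of the eigenfunction, and this turns the stability of $u_\Lambda$ into the quadratic form inequality
\[
\sum_{i,j=1}^{m}\int_\Omega f_{u_iu_j}(u_\Lambda)\,\zeta_i\zeta_j\,dx \;\le\; \sum_{i=1}^{m}\int_\Omega\frac{|\nabla\zeta_i|^{2}}{\lambda_i}\,dx, \qquad \forall\,\zeta\in H_0^{1}(\Omega;\R^{m}),
\]
after setting $\zeta_i=\sqrt{\lambda_i}\psi_i$. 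It is this inequality that is responsible for the weights $1/\lambda_k$ inside the first sum and the prefactor $\sqrt{\lambda_i}$ in front of both right-hand members of the claim. The second ingredient, Morrey squared, reads
\[
\|v\|_{L^\infty(\Omega)}^{2} \;\le\; C(n)\,|\Omega|^{(4-n)/(2n)}\,\Big(\int_\Omega|\nabla v|^{4}\,dx\Big)^{1/2},\qquad v\in W_0^{1,4}(\Omega),
\]
which reproduces exactly the $|\Omega|$-exponent and the outer $1/2$-power that appear in the statement.

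I would then apply the squared Morrey estimate to $v=(u_i-t)_{+}$, which lies in $W_0^{1,4}(\Omega)$ because $u_i=0$ on $\partial\Omega$ and $t>0$. This gives the preliminary bound
\[
(\|u_i\|_{L^\infty(\Omega)}-t)^{2} \;\le\; C(n)\,|\Omega|^{(4-n)/(2n)}\,\Big(\int_{\{u_i>t\}}|\nabla u_i|^{4}\,dx\Big)^{1/2}.
\]
The core technical task is to replace the integral on the interior region $\{u_i>t\}$ by integrals on the complementary boundary-layer regions $\{u_k<t\}$ and $\Omega\setminus\{u_k\ge t,u_l\ge t\}$ displayed in the statement. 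For this I would differentiate the $k$-th equation to get $-\Delta(\partial_j u_k)=\lambda_k\sum_l f_{u_ku_l}(u)\partial_j u_l$, multiply by $(\partial_j u_k)\,\eta^{2}$ with the cutoff $\eta=\min(u_k/t,1)=T_t(u_k)/t$, sum over $k$ and $j$, and integrate by parts. The gradient of this cutoff is supported precisely on $\{u_k<t\}$, so this is the region that naturally appears, and the normalization by $t$ produces the $1/t$ coefficient in front of the first right-hand term. The coupling introduced by $[f_{u_ku_l}]$ is controlled by plugging the test function $\zeta_k=\sqrt{\lambda_k}\,|\nabla u_k|\,\eta$ into the stability inequality: the prefactor $\sqrt{\lambda_k}$ is exactly what places $1/\lambda_k$ inside the right-hand sums, and on the interior set $\{u_k\ge t,u_l\ge t\}$ the cutoff equals $1$ so the corresponding terms cancel between the stability inequality and the differentiated equation, localizing the cross coupling $f_{u_ku_l}|\nabla u_k||\nabla u_l|$ on $\{u_k<t\}\cup\{u_l<t\}$.

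The main obstacle is the calibration of this test function together with the ensuing Cauchy--Schwarz and Young inequalities, so that the higher-order interior term $\int_{\{u_i>t\}}|\nabla u_i|^{4}$ that comes out of the Bochner-type identity is absorbed back into the left-hand side and only the two families of boundary-layer integrals survive on the right. Once this absorption is performed, the identity $(\|u_i\|_\infty-t)(\|u_i\|_\infty+t)=\|u_i\|_\infty^{2}-t^{2}$ and the elementary bound $\|u_i\|_\infty+t\ge\|u_i\|_\infty$ convert the resulting quadratic estimate into the linear estimate displayed, splitting the right-hand side into the $1/t$-weighted piece and the $t$-independent piece, each carrying the prefactor $\sqrt{\lambda_i}|\Omega|^{(4-n)/(2n)}$ as required.
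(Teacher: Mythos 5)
Your proposal correctly identifies several of the paper's structural ingredients: the stability inequality $\sum_{i,j}\int f_{u_iu_j}\psi_i\psi_j \le \sum_i\lambda_i^{-1}\int|\nabla\psi_i|^2$, the role of the truncation-type cutoff $\varphi(s)=\min(s/t,1)$ composed with $u_k$ in producing the factor $1/t$ and the support $\{u_k<t\}$, and the origin of the $\lambda$-weights. However, the core of the argument is not Morrey embedding, and the plan as written does not close.

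First, the exponents do not match. From $\|v\|_{L^\infty}\le C|\Omega|^{(4-n)/(4n)}\bigl(\int|\nabla v|^4\bigr)^{1/4}$ applied to $v=(u_i-t)_+$, one gets $\|u_i\|_{L^\infty}-t \le C|\Omega|^{(4-n)/(4n)}\bigl(\int_{\{u_i>t\}}|\nabla u_i|^4\bigr)^{1/4}$. Squaring first and then ``converting back to a linear estimate'' does not change this: from $(\|u_i\|_\infty-t)^2\le R$ one can only conclude $\|u_i\|_\infty-t\le R^{1/2}$, not $\|u_i\|_\infty-t\le R$, and the factorization $(\|u_i\|_\infty-t)(\|u_i\|_\infty+t)=\|u_i\|_\infty^2-t^2$ does not repair this. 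So the route via Morrey delivers the wrong powers of $|\Omega|$ and of the gradient integral compared with $|\Omega|^{(4-n)/(2n)}\bigl(\cdots\bigr)^{1/2}$ in the statement.

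Second, and more seriously, the proposed ``core technical task'' of replacing $\int_{\{u_i>t\}}|\nabla u_i|^4$ by the boundary-layer integrals is not achievable by the mechanism you describe. Differentiating the equation, testing against $\partial_j u_k\,\eta_k^2$, and combining with the stability inequality evaluated at $\psi_k=|\nabla u_k|\eta_k$ yields exactly the Sternberg--Zumbrun/Fazly--Ghoussoub inequality used in the paper, namely
\[
\sum_i\frac{1}{\lambda_i}\int_{\Omega\cap\{|\nabla u_i|\neq0\}}\bigl(|\nabla_T|\nabla u_i||^2+|A_i|^2|\nabla u_i|^2\bigr)\eta_i^2\,dx
\le \sum_i\frac{1}{\lambda_i}\int_\Omega|\nabla u_i|^2|\nabla\eta_i|^2\,dx
+\sum_{i<j}\int_\Omega f_{u_iu_j}(u)|\nabla u_i||\nabla u_j|(\eta_i-\eta_j)^2\,dx,
\]
whose left-hand side controls a weighted second-fundamental-form integral on interior level sets, not $\int_{\{u_i>t\}}|\nabla u_i|^4$. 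There is simply no $\int_{\{u_i>t\}}|\nabla u_i|^4$ term to ``absorb into the left-hand side.'' The paper's proof instead passes from the curvature bound to the $L^\infty$ bound by a purely geometric chain: the coarea formula writes both sides of the above inequality as level-set integrals under the choice $\eta_i=\varphi(u_i)$; the Michael--Simon inequality bounds $|\Gamma_s^{u_i}|^{(n-2)/(n-1)}$ by $\int_{\Gamma_s^{u_i}}|H_i|$; Cauchy--Schwarz on the level set relates $\int_{\Gamma_s^{u_i}}|H_i|$ to $\bigl(\int_{\Gamma_s^{u_i}}|A_i|^2|\nabla u_i|\bigr)^{1/2}\bigl(\int_{\Gamma_s^{u_i}}|\nabla u_i|^{-1}\bigr)^{1/2}$; the isoperimetric inequality converts $|\Gamma_s^{u_i}|$ to $V_i(s)$; and a final coarea integration in $s$, valid because $(4-n)/n>0$ for $n\le3$, yields $|\Omega|^{(4-n)/n}$ and hence the stated $|\Omega|^{(4-n)/(2n)}$ after the square root. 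This geometric mechanism is what produces the correct shape and exponents of the estimate; Morrey embedding is not part of it. To repair your proposal you would need to replace steps (a)--(b) entirely by this level-set argument.
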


Two essential ingredients in the proof of Proposition \ref{prop-boundary-estimative} are taken of the work \cite{Fazly2013} about potential systems by Fazly and Ghoussoub. The first of them is the inequality

\begin{gather}\tag{7.3}\label{stability-inequality}
\sum_{i,j}\int_{\Omega}f_{u_iu_j}(u_\Lambda)\psi_i\psi_jdx\leq\sum_i\frac{1}{\lambda_i}\int_{\Omega}|\nabla\psi_i|^2dx,
\end{gather}
which follows directly from the stability definition applied to (10) in \cite{Fazly2013}. Already the second one is a consequence of (32) in \cite{Fazly2013}, namely,
\begin{gather}\tag{7.4}\label{7.4}
\begin{array}{lcl}
\displaystyle\sum_i\frac{1}{\lambda_i}\int_{\Omega\cap\{|\nabla u_i|\neq0\}}\left(|\nabla_T|\nabla u_i||^2+|A_i|^2|\nabla u_i|^2\right)\eta_i^2dx & \leq & \displaystyle\sum_i\frac{1}{\lambda_i}\int_{\Omega}|\nabla u_i|^2|\nabla\eta_i|^2dx \\
& & + \displaystyle\sum_{i<j}\int_{\Omega}f_{u_iu_j}(u)|\nabla u_i||\nabla u_j|(\eta_i-\eta_j)^2dx
\end{array}
\end{gather}
for any Lipschitz function $\eta_i:\overline{\Omega}\rightarrow\R$ satisfying $\eta_i\mid_{\partial\Omega}\equiv0$, where $\nabla_T$ denotes the tangential gradient along a level set of $u_i$ and

$$
|A_i|^2 = \sum_{l=1}^{n-1}\kappa_{i,l}^2
$$
with $\kappa_{i,l}$ being principal curvatures of the level sets of $u_i$ passing through $x\in\Omega\cap\{|\nabla u_i|\neq0\}$.

\begin{proof}[Proof of Proposition \ref{prop-boundary-estimative}]
Given $\Lambda \in \mathcal{A}$, let $u_{\Lambda}=(u_1,\ldots,u_m)$ be the stable minimal positive strong solution of (\ref{gradient-system}). For each $i = 1, \ldots, m$, set $T_{u_i} := \|u_i\|_{L^{\infty}(\Omega)}$ and $\Gamma_s^{u_i} := u_i^{-1}(s)$ for $s \in [0,T_{u_i}]$. By Sard's theorem, $s\in(0,T_{u_i})$ is a regular value of $u_i$ almost everywhere. In \eqref{7.4}, choose

$$
\eta_i(x) = \varphi(u_i(x)),
$$
where $\varphi$ is a Lipschitz function in $[0,\infty)$ such that $\varphi(0)=0$. By the coarea formula, we have

\[
\frac{1}{\lambda_i}\int_{\Omega}|\nabla u_i|^2|\nabla\eta_i|^2dx = \frac{1}{\lambda_i}\int_{\Omega}|\nabla u_i|^4\varphi'(u_i)^2dx  = \frac{1}{\lambda_i}\int_0^{T_{u_i}}\left(\int_{\Gamma_s^{u_i}}|\nabla u_i|^3dV_s\right)\varphi'(s)^2ds.
\]
This equality and \eqref{stability-inequality} together imply

$$
\sum_i\frac{1}{\lambda_i}\int_0^{T_{u_i}}\left(\int_{\Gamma_s^{u_i}}|\nabla u_i|^3dV_s\right)\varphi'(s)^2ds  +  \sum_{i<j}\int_{\Omega}f_{u_iu_j}(u)|\nabla u_i||\nabla u_j|(\varphi(u_i)-\varphi(u_j))^2dx
$$
\begin{eqnarray*}
&\geq & \displaystyle{\sum_i\dfrac{1}{\lambda_i}\int_{\Omega\cap\{|\nabla u_i|\neq0\}}\left(|\nabla_T|\nabla u_i||^2+|A_i|^2|\nabla u_i|^2\right)\varphi(u_i)^2dx}\\
& \geq & \sum_i\dfrac{1}{\lambda_i}\int_0^{T_{u_i}}\left(\int_{\Gamma_s^{u_i}}|A_i|^2|\nabla u_i|dV_s\right)\varphi(s)^2ds.
\end{eqnarray*}
More specifically, taking

$$\varphi(s)=\left\{\begin{array}{ll}
s/t &\mbox{if}\quad 0\leq s < t,\\
1 &\mbox{if}\quad t\leq s
\end{array}\right.
$$
in the above inequality, we get

\begin{eqnarray*}
\dfrac{1}{\lambda_i}\int_{t}^{T_{u_i}}\int_{\Gamma_s^{u_i}}|A_i|^2|\nabla u_i|dV_sds & \leq & \sum_k\dfrac{1}{\lambda_k}\int_0^{T_{u_k}}\left(\int_{\Gamma_s^{u_k}}|A_k|^2|\nabla u_k|dV_s\right)\varphi(s)^2ds \\
& \leq & \sum_k\frac{1}{\lambda_kt^2}\int_0^{t}\left(\int_{\Gamma_s^{u_k}}|\nabla u_k|^3dV_s\right)ds\\
& & + \sum_{k<l}\int_{\Omega}f_{u_ku_l}(u)|\nabla u_k||\nabla u_l|(\varphi(u_k)-\varphi(u_l))^2dx.
\end{eqnarray*}
In conclusion, the expression of $\varphi$ and the coarea formula provide

\begin{gather}\tag{7.5}\label{estimate1}
\begin{array}{lcl}
\dfrac{1}{\lambda_i}\displaystyle\int_{t}^{T_{u_i}}\displaystyle\int_{\Gamma_s^{u_i}}|A_i|^2|\nabla u_i|dV_sds & \leq & \displaystyle\sum_k\frac{1}{\lambda_kt^2}\int_{\{u_k<t\}}|\nabla u_k|^4dx\nonumber\cr
& & + \displaystyle\sum_{k<l}\int_{\Omega\setminus\{u_k \geq t,u_l\geq t\}}f_{u_ku_l}(u)|\nabla u_k||\nabla u_l|dx.
\end{array}
\end{gather}
Denote by $|\Gamma_s^{u_i}|$ the volume of $\Gamma_s^{u_i}$ and $H_i$ the mean curvature function of $\Gamma_s^{u_i}$. For any $n \geq 2$, the geometric inequality

\begin{gather}\tag{7.6}\label{desg1}
|\Gamma_s^{u_i}|^{\frac{n-2}{n-1}} \leq C(n)\int_{\Gamma_s^{u_i}}|H_i|dV_s
\end{gather}
holds for almost every $s$. In dimension $n=2$, the set $\Gamma_s^{u_i}$ is a regular curve for almost every $s$ and \eqref{desg1} follows from the theory of plane curves. For $n\geq3$, the inequality \eqref{desg1} is a consequence of Theorem 2.1 of \cite{Michael1973} by Michael and Simon (see also Mantegazza \cite{Mantegazza2002}, Proposition 5.2).\\ On the other hand, the isoperimetric inequality ensures that

\begin{gather}\tag{7.7}\label{desg2}
V_i(s):=|\{u_i>s\}| \leq C(n)|\Gamma_s^{u_i}|^{\frac{n}{n-1}}.
\end{gather}
Joining (\ref{desg1}) and (\ref{desg2}) and applying Hölder's inequality, we derive

\begin{eqnarray*}
 V_i(s)^{\frac{n-2}{n}} & \leq & C(n)\int_{\Gamma_s^{u_i}}|H_i|dV_s\\
 & \leq & C(n)\left\{\int_{\Gamma_s^{u_i}}|A_i|^2|\nabla u_i|dV_s\right\}^{1/2}\left\{\int_{\Gamma_s^{u_i}}\dfrac{dV_s}{|\nabla u_i|}\right\}^{1/2}.
\end{eqnarray*}
Here it was used that $|H_i| \leq |A_i|$. Thus, we have

\begin{eqnarray*}
\dfrac{1}{\sqrt{\lambda_i}}(T_{u_i} - t) & = & \dfrac{1}{\sqrt{\lambda_i}}\int_{t}^{T_{u_i}}ds\\
& \leq & \dfrac{1}{\sqrt{\lambda_i}}\int_{t}^{T_{u_i}}C(n)\left\{\int_{\Gamma_s^{u_i}}|A_i|^2|\nabla u_i|dV_s\right\}^{1/2}\left\{ V_i(s)^{\frac{2(2-n)}{n}}\int_{\Gamma_s^{u_i}}\dfrac{dV_s}{|\nabla u_i|}\right\}^{1/2}ds\\
& \leq & C(n)\left\{\dfrac{1}{\lambda_i}\int_{t}^{T_{u_i}}\int_{\Gamma_s^{u_i}}|A_i|^2|\nabla u_i|dV_sds\right\}^{1/2} \left\{\int_{t}^{T_{u_i}}V_i(s)^{\frac{2(2-n)}{n}}\int_{\Gamma_s^{u_i}}\dfrac{dV_s}{|\nabla u_i|}ds\right\}^{1/2}.
\end{eqnarray*}
Thanks to the estimate (\ref{estimate1}), we arrive at

\begin{eqnarray*}
\dfrac{1}{\sqrt{\lambda_i}}(T_{u_i} - t) & \leq & C(n)\left\{\sum_k\frac{1}{\lambda_kt^2}\int_{\{u_k<t\}}|\nabla u_k|^4dx +  \sum_{k<l}\int_{\{u_k \geq t,u_l\geq t\}}f_{u_ku_l}(u)|\nabla u_k||\nabla u_l|dx\right\}^{1/2}\\
& &\hspace{5.0cm} \times \left\{\int_{t}^{T_{u_i}}V_i(s)^{\frac{2(2-n)}{n}}\int_{\Gamma_s^{u_i}}\dfrac{dV_s}{|\nabla u_i|}ds\right\}^{1/2}.
\end{eqnarray*}
Since the function $V_i(t)^{\frac{4-n}{n}}$ is nonincreasing for $n \leq 3$, again applying the coarea formula, we get

$$
-V_i'(s)=\int_{\Gamma_s^{u_i}}\dfrac{dV_s}{|\nabla u_i|}
$$
for $s \in (0,T_{u_i})$ almost everywhere. We also have

$$
|\Omega|^{\frac{4-n}{n}}\geq V_i(t)^{\frac{4-n}{n}}=\left.V_i(s)^{\frac{4-n}{n}}\right|_{s=T_{u_i}}^{s=t} \geq \dfrac{4-n}{n}\int_{t}^{T_{u_i}}V_i(s)^{\frac{2(2-n)}{n}}(-V_i'(s))ds.
$$
Hence, we establish that

$$\dfrac{4-n}{n}\int_{t}^{T_{u_i}}V_i(s)^{\frac{2(2-n)}{n}}\int_{\Gamma_s^{u_i}}\dfrac{dV_s}{|\nabla u_i|}ds \leq |\Omega|^{\frac{4-n}{n}}.$$
Finally, using this inequality, we deduce that

\begin{eqnarray*}
\|u_i\|_{L^{\infty}(\Omega)} & \leq & t + C(n)|\Omega|^{\frac{4-n}{2n}}\sqrt{\lambda_i}\left(\sum_k\frac{1}{\lambda_kt^2}\int_{\{u_k<t\}}|\nabla u_k|^4dx\right.\\
&& + \left.\sum_{k<l}\int_{\Omega\setminus\{u_k \geq t,u_l\geq t\}}f_{u_ku_l}(u)|\nabla u_k||\nabla u_l|dx\right)^{1/2}.
\end{eqnarray*}

\end{proof}

The next proposition consists of two estimates in a neighborhood of $\partial\Omega$ for positive strong solutions of (\ref{gradient-system}).

\begin{prop}\label{aux1}
Let $u$ be a positive strong solution of (\ref{gradient-system}). Denote $\Omega_{\varepsilon}=\{x\in\Omega : \delta(x)<\varepsilon\}$ for $\varepsilon > 0$. Assume that $\Omega$ is convex and $n\geq2$. Then, there exist constants $\varepsilon, D_1, D_2 > 0$ depending only on the domain $\Omega$ such that for any $i = 1, \ldots,m$,

\begin{itemize}
\item[(i)] $u_i(x)\geq D_1 \delta(x)\ \text{for all}\ x\in\Omega$;

\item[(ii)] $\|u_i\|_{L^{\infty}(\Omega_{\varepsilon})}\leq D_2 \|u_i\|_{L^1(\Omega)}$.
\end{itemize}

\end{prop}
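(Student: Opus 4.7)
My plan for (i) is to use a linear comparison with the torsion function. Assumptions (A) and (B) on $F=\nabla f$ give $f_{u_i}(u(x))\geq f_{u_i}(0)=:c_i>0$ for every $x\in\Omega$, so $u_i$ satisfies $-\Delta u_i\geq\lambda_i c_i$ in $\Omega$ with zero Dirichlet data. Letting $\eta\in W^{2,n}(\Omega)\cap W_0^{1,n}(\Omega)$ be the torsion function, i.e.\ the unique strong solution of $-\Delta\eta=1$ in $\Omega$, $\eta=0$ on $\partial\Omega$, the weak maximum principle yields $u_i\geq\lambda_i c_i\,\eta$. Since convex $\Omega$ is $C^{1,1}$ and satisfies the interior sphere condition, Hopf's lemma applied to $\eta$ provides $\eta(x)\geq c_\Omega\,\delta(x)$ in $\Omega$ for some $c_\Omega>0$. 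Then (i) follows with $D_1:=c_\Omega\min_i\lambda_i c_i>0$.

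For (ii) the plan is to appeal to the method of moving planes. By (B), we have $f_{u_iu_j}(t)\geq 0$ for every $i\neq j$ and $t\in\R^m_+$, so the linearization of (\ref{gradient-system}) along the positive solution $u$ is a weakly coupled cooperative system. Convexity of $\Omega$ then allows a Gidas-Ni-Nirenberg-Serrin type reflection argument for cooperative elliptic systems, yielding a geometric $\varepsilon>0$ (depending only on $\Omega$) with the following property: for every $x\in\Omega_\varepsilon$ there is an open family of pairs $(e,\mu)\in S^{n-1}\times[\mu_0,\mu_1]$, with $\mu_0>0$ uniform in $x$, such that the reflected point $R^e_\mu(x):=x-2\mu e$ lies in $\Omega$ at distance at least $\mu_0$ from $\partial\Omega$ and satisfies $u_i(x)\leq u_i(R^e_\mu(x))$ for every $i=1,\ldots,m$.

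Since the parametrization $(e,\mu)\mapsto x-2\mu e$ has Jacobian $2^n\mu^{n-1}\geq 2^n\mu_0^{n-1}$, the set $\mathcal{R}(x)$ of all such reflected points has $n$-dimensional volume uniformly bounded below by some $c_1>0$ independent of $x\in\Omega_\varepsilon$. Averaging the reflection inequality over $\mathcal{R}(x)$ then gives

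\[
u_i(x)\leq \frac{1}{|\mathcal{R}(x)|}\int_{\mathcal{R}(x)}u_i(y)\,dy\leq \frac{1}{c_1}\|u_i\|_{L^1(\Omega)},
\]
which is (ii) with $D_2:=1/c_1$.

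The principal obstacle is expected to be the moving plane step itself: carrying through the Serrin-Gidas-Ni-Nirenberg reflection uniformly in $x$, $e$ and $\mu$ for the cooperative gradient system on a convex $C^{1,1}$ domain, ensuring both that each reflected cap stays inside $\Omega$ and that $|\mathcal{R}(x)|$ is uniformly bounded below by a purely geometric constant. This relies on the strong maximum principle and Hopf's lemma applied to the linearized cooperative system, in the now-standard but somewhat delicate template of the classical scheme adapted to weakly coupled systems.
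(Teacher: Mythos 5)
Your argument is correct and lands on the same two pillars the paper uses: a Hopf-type barrier for (i) and the moving-planes method for (ii). The paper obtains (i) by citing Lemmas \ref{uniqueness-weak-solutions-lemma} and \ref{HL-estimate}, i.e.\ the global Green-function lower bound $G_{\cal L}(x,y)\geq C_2\delta(x)\delta(y)$ coming from Ancona/Hueber--Sieveking/Zhao, whereas you compare directly with the torsion function and apply Hopf's lemma to it; for the Laplacian on a $C^{1,1}$ domain the two are interchangeable, and your route is a bit more self-contained since it avoids the Green-function machinery. Both versions rest on (A) and (B) in the same essential way (they produce the strict lower bound $-\Delta u_i\geq\lambda_i f_{u_i}(0)>0$), and in both the resulting $D_1$ actually depends on $\Lambda$ and $f$, not on $\Omega$ alone -- an imprecision in the way the proposition phrases the constants that your proof inherits from the paper rather than introduces. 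For (ii), the paper simply cites Troy's extension of Gidas--Ni--Nirenberg to cooperative systems on convex domains; your reflection-and-averaging sketch (near-boundary monotonicity $u_i(x)\leq u_i(R^e_\mu(x))$ on a cone of directions of uniform aperture, Jacobian bound, then averaging) is precisely the de Figueiredo--Lions--Nussbaum mechanism that underlies that citation, and the cooperativity you invoke, $f_{u_iu_j}\geq 0$ for $i\neq j$, does indeed follow from (B) once $F$ is $C^1$. So the proposal is correct; the only genuine variation is the more elementary torsion-function barrier in part (i).
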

The claim (i) is a direct consequence from Lemmas \ref{uniqueness-weak-solutions-lemma} and \ref{HL-estimate}. The tool used in the proof of the second assertion is the well-known moving planes method. We refer for example to Troy \cite{Troy1981} where the estimate (ii) is proved on convex domains for any $m \geq 1$ and $n\geq2$.

Propositions \ref{prop-boundary-estimative} and \ref{aux1} are key tools in the proof of the following result:

\begin{prop}\label{prop-boundedness}
Let $f\in C^{2}(\mathbb{R}^m)$ be a positive function satisfying $\Hessian f(t)>0$ for all $t \in\R^m_+$. Assume that the potential field $F = \nabla f$ satisfies (A), (B) and (C). Assume also that $\Omega$ is convex and $n=2, 3$. Let $u_{\Lambda}=(u_1,\ldots,u_m)$ be the stable minimal positive strong solution of (\ref{gradient-system}) for $\Lambda\in\mathcal{A}$. Then, there exists a constant $C_0 > 0$, depending on $\Omega, \varepsilon, D_1, D_2, \Lambda, \|\nabla f\|_{L^\infty\left(\overline{B}_r;\R^m\right)}$ and $\|\Hessian f\|_{L^\infty\left(\overline{B}_r;\R^{m^2}\right)}$, such that

\begin{gather}\tag{7.8}\label{final-estimate}
    \|u_i\|_{L^{\infty}(\Omega)}\leq C_0,
\end{gather}
where $\varepsilon$, $D_1$ and $D_2$ are given in Proposition \ref{aux1} and $r=D_2 \|u_{\Lambda}\|_{L^1(\Omega;\R^m)}$.
\end{prop}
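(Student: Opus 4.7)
The plan is to combine the boundary estimate of Proposition~\ref{prop-boundary-estimative} with the auxiliary bounds (i)--(ii) of Proposition~\ref{aux1}, invoking local boundary elliptic regularity to control the gradient factors appearing in \eqref{boudary-estimate}.

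First I would choose $t=D_1\varepsilon/2$. By (i) of Proposition~\ref{aux1}, $u_i(x)\geq D_1\delta(x)$ for every $i$, so
\[
\{u_i<t\}\subseteq\{\delta<\varepsilon/2\}=\Omega_{\varepsilon/2},\qquad \Omega\setminus\{u_k\geq t,u_l\geq t\}=\{u_k<t\}\cup\{u_l<t\}\subseteq\Omega_{\varepsilon/2},
\]
and both integrals on the right-hand side of \eqref{boudary-estimate} are localized in the thin strip $\Omega_{\varepsilon/2}$ near $\partial\Omega$. Next, (ii) of Proposition~\ref{aux1} gives $\|u_i\|_{L^\infty(\Omega_\varepsilon)}\leq D_2\|u_i\|_{L^1(\Omega)}\leq r$, so $u(x)\in\overline{B}_r\subset\mathbb{R}^m$ for every $x\in\Omega_\varepsilon$. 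Since $f\in C^{2}$, this already bounds $|\nabla f(u(x))|$ by $M_1:=\|\nabla f\|_{L^\infty(\overline{B}_r;\mathbb{R}^m)}$ and $|f_{u_ku_l}(u(x))|$ by $M_2:=\|\Hessian f\|_{L^\infty(\overline{B}_r;\mathbb{R}^{m^2})}$ pointwise on $\Omega_\varepsilon$.

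The main technical step is to pass from these $L^\infty$ bounds on $u$ and on the right-hand side of the equation to an $L^\infty$ bound on $|\nabla u_i|$ in $\Omega_{\varepsilon/2}$. Each $u_i\in W^{2,n}(\Omega)\cap W_0^{1,n}(\Omega)$ (provided by Theorem~\ref{separation}) solves $-\Delta u_i=g_i:=\lambda_i f_{u_i}(u)$ with $\|g_i\|_{L^\infty(\Omega_\varepsilon)}\leq\lambda_iM_1$. A local boundary $W^{2,p}$ estimate for the Laplacian on $C^{1,1}$ domains --- obtained by flattening $\partial\Omega$ and applying the global $W^{2,p}$ estimate to $\zeta u_i$, where $\zeta$ is a smooth cutoff equal to $1$ on $\Omega_{\varepsilon/2}$ and vanishing near $\{\delta=\varepsilon\}$, together with a bootstrap starting from the a priori $W^{2,n}$-membership to absorb the cross term $\nabla\zeta\cdot\nabla u_i$ --- yields, for every $p<\infty$,
\[
\|u_i\|_{W^{2,p}(\Omega_{\varepsilon/2})}\leq C(\Omega,\varepsilon,p)\bigl(\|u_i\|_{L^\infty(\Omega_\varepsilon)}+\|g_i\|_{L^\infty(\Omega_\varepsilon)}\bigr).
\]
Choosing any $p>n$ (permissible since $n\in\{2,3\}$) and using the Sobolev embedding $W^{2,p}(\Omega_{\varepsilon/2})\hookrightarrow C^{1}(\overline{\Omega_{\varepsilon/2}})$, one obtains $\|\nabla u_i\|_{L^\infty(\Omega_{\varepsilon/2})}\leq C$ with $C$ depending only on the quantities allowed in the conclusion.

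Substituting into \eqref{boudary-estimate} finishes the proof: each of the two integrals is bounded by the $L^\infty$ controls above times $|\Omega_{\varepsilon/2}|\leq|\Omega|$, and the right-hand side thus gives a $C_0$ of the prescribed form. The delicate point is precisely the regularity step, since the pointwise control of $g_i$ is available only in $\Omega_\varepsilon$ and not globally; the $C^{1,1}$ regularity of $\partial\Omega$ is what permits the flattening that yields the local boundary $W^{2,p}$ estimate, and the a priori $W^{2,n}$-membership of $u_\Lambda$ supplies enough starting regularity to bootstrap to $W^{1,\infty}$ in the strip.
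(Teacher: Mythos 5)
Your proof is correct and takes essentially the same route as the paper's: choose $t=D_1\varepsilon/2$ so that Proposition \ref{aux1}(i) confines the integrals in \eqref{boudary-estimate} to the strip $\Omega_{\varepsilon/2}$, use Proposition \ref{aux1}(ii) to bound $u$ pointwise in $\Omega_\varepsilon$ (hence $\nabla f(u)$ and $\Hessian f(u)$), and then apply local boundary Calder\'on--Zygmund regularity for the Laplacian on $C^{1,1}$ domains. The only superficial difference is that you bootstrap all the way to an $L^\infty$ bound on $\nabla u_i$ in $\Omega_{\varepsilon/2}$ via $W^{2,p}$ with $p>n$ and the embedding into $C^1$, whereas the paper stops at a $W^{1,4}(\Omega_{\varepsilon/2})$ bound, which already suffices to control both integrals on the right-hand side of \eqref{boudary-estimate} (the second one together with $\|\Hessian f\|_{L^\infty(\overline{B}_r;\R^{m^2})}$).
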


\begin{proof}
In Proposition \ref{prop-boundary-estimative}, take

$$
t = D_1 \dfrac{\varepsilon}{2}.
$$
By the part (i) of Proposition \ref{aux1}, for $x \in \{u_i<t\}$, we have

$$
D_1 \delta(x) \leq u_i(x) < t = D_1 \dfrac{\varepsilon}{2},
$$
so that

$$
\delta(x)<\dfrac{\varepsilon}{2}.
$$
Thus,

\begin{gather}\tag{7.9} \label{subset}
\{u_i<t\}\subset\Omega_{\varepsilon/2}\ \ {\rm and}\ \ \Omega \setminus \{u_k \geq t, u_l \geq t\}\subset\Omega_{\varepsilon/2}
\end{gather}
for every $i, k, l=1,\ldots,m$. Since $n = 2, 3$, by Sobolev embedding, it suffices to establish the boundedness of $\|u_i\|_{W^{1,4}(\Omega_{\varepsilon/2})}$. Notice that $u_\Lambda$ satisfies

\begin{gather}\tag{7.10} \label{syst}
\left\{
\begin{array}{rlllr}
-{\Delta} u_\Lambda &=& \Lambda \nabla f(u_\Lambda) & {\rm in} & \Omega_\varepsilon, \\
u_\Lambda&=&0 & {\rm on} & \partial \Omega.
\end{array}\right.
\end{gather}
Moreover, $\partial\Omega\cup\Omega_{\varepsilon/2}$ is a precompact subset of $\partial\Omega\cup\Omega_{\varepsilon}$ and both are smooth. On the other hand, by the part (ii) of Proposition \ref{aux1}, we have

$$
\|f_{u_i}(u_1,\ldots,u_m)\|_{L^{\infty}(\Omega_\varepsilon)}\leq\|\nabla f\|_{L^\infty\left(\overline{B}_r;\R^m\right)}.
$$
So, $L^p$ Calderón-Zygmund estimate applied to each equation of \eqref{syst} yields

$$
\|u_i\|_{W^{1,4}(\Omega_{\varepsilon/2})}\leq C_1
$$
for some constant $C_1 > 0$ depending on $\Omega, \varepsilon, D_1, D_2, \Lambda$ and $\|\nabla f\|_{L^\infty\left(\overline{B}_r;\R^m\right)}$. Finally, Proposition \ref{prop-boundary-estimative}, \eqref{subset} and the above estimate give for any $i = 1, \ldots, m$,

\begin{equation*}
\|u_i\|_{L^{\infty}(\Omega)}\leq C_0.
\end{equation*}
\end{proof}
The boundedness of the extremal solution $u_\Lambda^*$ follows from Proposition \ref{prop-boundedness} as follows.

\begin{proof}[Proof of Theorem \ref{teo-extremal-strong}]
 Let $u_\Lambda^*$ be the extremal solution of (\ref{gradient-system}) associated to $\Lambda = (\lambda^*(\sigma), \lambda^*(\sigma) \sigma) \in \Lambda^*$. Take a sequence $\lambda_k$ converging to $\lambda^*(\sigma)$ such that $0 < \lambda_k < \lambda^*(\sigma)$. Let $u_{\Lambda_k}$ the stable minimal positive strong solution corresponding to $\Lambda_k = (\lambda_k, \lambda_k \sigma)$. Since $u_{\Lambda_k}$ converges pointwise almost everywhere in $\Omega$ and in $L^1(\Omega)$ to $u_\Lambda^*$, letting $k \rightarrow \infty$ in (\ref{final-estimate}), we deduce that $u_\Lambda^*\in L^{\infty}(\Omega)$. This concludes the proof.
\end{proof}

\n {\bf Acknowledgments:} The third author were partially supported by CNPq/Brazil (PQ 302670/2019-0, Universal 429870/2018-3) and Fapemig/Brazil (PPM-00561-18).


\end{document}